\newtheorem*{theorem}{Theorem}
\newtheorem*{lemma}{Lemma}
\newcolumntype{x}[1]{>{\centering\let\newline\\\arraybackslash\hspace{0pt}}p{#1}}
\newcommand*\circled[1]{\tikz[baseline=(char.base)]{
            \node[shape=circle,draw,inner sep=2pt] (char) {\hspace{-0.05cm}\small{#1}};}}
\newcolumntype{M}[1]{>{\centering\arraybackslash}m{#1}}
\newcolumntype{C}[1]{>{\centering\arraybackslash}p{#1}}
\newcolumntype{L}[1]{>{\raggedright\let\newline\\\arraybackslash\hspace{0pt}}p{#1}}
\titleclass{\subsubsubsection}{straight}[\subsection]
\newcounter{subsubsubsection}[subsubsection]
\renewcommand\thesubsubsubsection{\thesubsubsection.\arabic{subsubsubsection}}
\def\toclevel@subsubsubsection{4}
\def\l@subsubsubsection{\@dottedtocline{4}{7em}{4em}}
\definecolor{darkblue}{rgb}{0,0.2,0.6}
\newcommand{\lk}[1]{{\textcolor{black}{#1}}}
\title{Pulled, pushed or failed: the demographic impact of a gene drive can change the nature of its spatial spread}
\author[,1]{Léna Kläy\thanks{Corresponding author: \texttt{lena.klay@sorbonne-universite.fr}}}
\author[2]{Léo Girardin}
\author[2]{Vincent Calvez}
\author[1]{Florence Débarre}
\affil[1]{\small{Institute of Ecology and Environmental Sciences Paris (IEES Paris), Sorbonne Université, CNRS, IRD, INRAE, Université Paris Est Creteil, Université de Paris, Paris Cedex 5, France.}}
\affil[2]{\small{Institut Camille Jordan, UMR 5208 CNRS and Universite Claude Bernard Lyon 1, France}}
\definecolor{darkbluelight}{rgb}{0,0.2,0.5}
\definecolor{darkgray}{gray}{0.3}
\definecolor{bleu}{rgb}{0,0.5,0.8}
\definecolor{vert}{rgb}{0,0.42,0.24}
\definecolor{rose}{rgb}{0.8,0,0.5}
\definecolor{orange}{rgb}{0.99,0.69,0.07}
\newcommand{\n}[1]{n_{_\mathrm{#1}}}
\newcommand{\p}[1]{p_{_\mathrm{#1}}}
\newcommand{\F}[1]{\mathscr{F}_{_\mathrm{#1}}}
\begin{document} 




\maketitle

\vspace{0.5cm}


\section*{Abstract}

Understanding the temporal spread of gene drive alleles -- alleles that bias their own transmission -- through modelling is essential before any field experiments. In this paper, we present a deterministic reaction-diffusion model describing the interplay between demographic and allelic dynamics, in a one-dimensional spatial context. We focused on the traveling wave solutions, and more specifically, on the speed of gene drive invasion (if successful). We considered various timings of gene conversion (in the zygote or in the germline) and different probabilities of gene conversion (instead of assuming 100$\%$ conversion as done in a previous work). We compared the types of propagation when the intrinsic growth rate of the population takes extreme values, either very large or very low. When it is infinitely large, the wave can be either successful or not, and, if successful, it can be either pulled or pushed, in agreement with previous studies (extended here to the case of partial conversion). In contrast, it cannot be pushed when the intrinsic growth rate is vanishing. In this case, analytical results are obtained through an insightful connection with an epidemiological SI model. We conducted extensive numerical simulations to bridge the gap between the two regimes of large and low growth rate. We conjecture that, if it is pulled in the two extreme regimes, then the wave is always pulled, and the wave speed is independent of the growth rate. This occurs for instance when the fitness cost is small enough, or when there is  stable coexistence of the drive and the wild-type in the population after successful drive invasion. Our model helps delineate the conditions under which demographic dynamics can affect the spread of a gene drive.

\newpage

\tableofcontents

\newpage

\section{Introduction}


A highly accurate, cost-effective and easy-to-use technology, the CRISPR-Cas genome editing system has been favoring the development of promising innovations \cite{jinek2012}. Among them, CRISPR-Cas9 gene drive \cite{alphey2020}, which aims to spread a trait of interest in a wild type population in a relatively short number of generations \cite{kyrou2018}. Application fields are numerous, and include i) the eradication of insect-borne diseases \cite{buchman2020, gantz2015a, kyrou2018}; ii) the elimination of herbicide and pesticide resistance in pest populations \cite{neve2018}; iii) the control of destructive invasive species \cite{gantz2015, grunwald2019}; iv) the conservation of biodiversity by spreading beneficial traits in endangered species \cite{esvelt2014, rode2019}.

Targeting sexually reproducing species, CRISPR-Cas9 gene drive biases the transmission of an allele from a parent to its offspring. This biased inheritance occurs through gene conversion (also called ``homing'' \cite{deredec2008}): in a heterozygous cell, the gene drive cassette present on one chromosome induces a double-strand break at a specific target site on the homologous chromosome, and the repair process duplicates the cassette. Overall, this process increases the chances of transmitting the gene drive cassette compared to its wild-part counterpart, and the mechanism repeats through the generations. Gene conversion can potentially take place at different timings of the life cycle: from very early on, in the zygote, meaning that potentially every single cell of the individual could become homozygous for the gene drive, to, in the germline, where only the gametes are converted. 


Gene drives can be classified into two main categories depending on the purpose of their use \cite{dhole2020, girardin2021}. A ``replacement drive'' is aimed at spreading a genetic modification in order to introduce an important and durable feature in the natural population. Population size is then not significantly affected and the drive construct may in principle persist indefinitely in the environment. A ``suppression drive'' on the other hand is meant to reduce population size by spreading a detrimental trait, such as a sex ratio distorter \cite{zotero-193} or by altering fertility \cite{kyrou2018}, for example. The term ``eradication drive'' can be used for the extreme case where population extinction is the aim. 
As with any new tool, it is essential to balance risks (safety) and benefits (efficacy) of the technique before running any field trials. Experiments currently conducted in laboratories provide small- to medium-scale information; mathematical models can help to extend these empirical results and identify the features that are the most important in determining the dynamics at larger scales \cite{committeeongenedriveresearch2016}.


Early gene drive models \cite{burt2003, deredec2008, unckless2015} used classical population genetics frameworks, and considered discrete non-overlapping generations in a well-mixed population. These simplifications helped to draw general conclusions, but it is important to challenge them. First of all, most of the species targeted in the context of gene drive do not have synchronous generations (for instance mosquitoes \cite{gantz2015a, hammond2015, buchman2020, kyrou2018}, flies \cite{gantz2015}, mice \cite{grunwald2019}). Secondly, the assumption of a single well-mixed collection of individuals living across a uniform space is usually not realistic. In fact, most of the natural landscapes are heterogeneous. Individuals are also more likely to interact with others that are in closer proximity, which might result in local genetic variations. Finally, releases of transgenic individuals are limited in range, which is another factor of spatial heterogeneity.

Taking into account spatio-temporal dynamics of the population size is another key step towards more realistic models. For the sake of simplicity, most early models focused on allele frequencies and considered a constant population density. However in the context of gene drive, the introduction of maladapted transgenic individuals can lead to the reduction (or even extinction) of the population \cite{dhole2020}. 
When considering a spatially structured population, variations in population density naturally generate a demographic flux from denser to less dense areas. This demographic flux is directed in opposition to the spread of the drive allele. It was previously shown \cite{girardin2021} that the advantage conferred by gene conversion may nevertheless counteract the demographic effect linked to the fitness cost. 

The main goal of this paper is to clarify the impact of variations in population density over the course of drive propagation over space. 

We study partial differential equations which follow the propagation of the drive in space and time. We explore numerically and analytically two models: a first model based on perfect conversion in the zygote, already introduced in \cite{girardin2021} in a spatially structured population, corresponding to an idealized case where gene conversion always succeeds; second, a more realistic model with partial conversion and presence of heterozygous individuals, already studied in \cite{rode2019} in a well-mixed, non spatial population. In order to investigate the possible spreading of gene drives through space after local introduction, we focus on the description of traveling waves solutions, that is, particular solutions which are stationary in a frame moving at constant speed. Our analysis goes beyond \cite{girardin2021} by several means: we extend it to the case of partial conversion, and we systematically analyze the case where the demographic effects are the strongest, in the regime of vanishing growth rate. The latter is possible through an insightful connection with an epidemiological SI model.

\newpage 

\section{Methodology}\label{sec:methodo}


\subsection{Models}\label{subsec:models}
We present our model step-by-step. For a genetically and spatially homogeneous population, we consider the following (non-dimensionalized) equation: \begin{equation}
    \partial_t n(t)  =  \Big( 1 + r \left(1-n(t)\right) \Big) f \ n(t)  - n(t)  \quad \quad (\forall t>0).
\end{equation} 

\lk{where the unit of time is generations}. Fecundity is density-dependent, and parametrized by the fitness $f$ and the rate $r$ at which the ($f-$dependent) carrying capacity is restored. 
When $f=1$, the carrying capacity is $1$, and we recover the logistic equation $  \partial_t n(t) =  r \left(1-n(t)\right) n(t)$. Other modelling options are discussed in \cite{girardin2021}.

Then, we add genetic diversity in the population. We still denote by $n$ the total density, and by $n_i$ the density of individuals with genotype $i$. The population we consider is diploid, sexually reproducing, and the fitness $f_i$ depends on the genotype. The dynamics are given by the following equations: 
\begin{equation}
    \partial_t n_i(t)  =  \Big(1 + r (1-n(t)) \Big) f_i  \ n(t) \underbrace{ \sum\limits_{l,k}  \pi_{l,k}^i \ \dfrac{n_l(t)}{n(t)} \ \dfrac{n_k(t)}{n(t)} }_{\text{Mating term}}  - n_i(t) \quad \quad (\forall t>0) \ (\forall i).
\end{equation}

The mating term takes into account the probability for each couple of parents $l$,$k$ to have offspring of type $i$ ($\pi_{l,k}^i$), multiplied by the probability of a mating event $l$,$k$ ($ \frac{n_l(t) n_k(t)}{n(t)^2}$), assuming random mating.

Last but not least, we consider a spatially structured population. We assume that the movement of individuals is described by a diffusion term with equal diffusion coefficients, normalized to 1. Since we focus on traveling wave solutions, we restrict our analysis to a one-dimensional space. We obtain the following equations:



\begin{equation}\label{eq:mastereq}
    \partial_t n_i(t,x) - \partial_{xx}^2 n_i  (t,x) =  \Big(1 + r (1-n(t,x)) \Big) f_i  \ n(t)  \sum\limits_{l,k}  \pi_{l,k}^i \dfrac{n_l(t)}{n(t)} \ \dfrac{n_k(t)}{n(t)}   - n_i(t,x) \quad \quad (\forall t >0) \ (\forall x \in \mathbb{R}) \ (\forall i).
\end{equation}

There are two possible alleles at the locus that we consider: the wild-type allele ($W$) and the drive allele ($D$). We have three genotypes: wild-type homozygotes ($i = WW$), drive homozygotes ($i = DD$) and heterozygotes ($i = DW$).  Wild-type homozygotes have fitness \lk{$f_{WW} = 1$}, drive homozygotes have fitness $f_{DD} = 1 - s$, where $s$ is the fitness cost of the drive, and drive heterozygotes have fitness $f_{DW} = 1 - s h$, where $h$ is the dominance parameter (see Table \ref{tab:variables}).

\begin{table}[H]
\centering
\begin{tabular}{rccc}
     & \textbf{Density} & \textbf{Adult genotype} & \textbf{Fitness}  \\
    Drive Homozygote & $\n{DD}$ &  D D &  $1-s$ \\
    Heterozygote  & $\n{DW}$ &  W D &  $1-sh$ \\
    Wild-type Homozygote & $\n{WW}$ & W W &  $1$ \\
\end{tabular}
\caption{\label{tab:variables} Population characteristics (D: Drive allele, W: Wild-type allele).}
\end{table}

All along the paper, we assume  $s \in (0,1)$, corresponding to a fitness cost carried by the drive alleles. Furthermore, we assume that the fitness of heterozygotes cannot be greater than the fitness of either homozygote ($h \in [0,1]$).


Gene conversion turns a heterozygous cell into a drive homozygous cell. To determine the probability $ \pi_{l,k}^i $ (probability for a couple $l$,$k$ to have offspring of type $i$), we need to take into account both the probability $c\in[0,1]$ with which gene conversion occurs in heterozygotes, and the stage of the life cycle at which it occurs: either in the zygote, or in the germline. This last feature modifies significantly the probabilities: for example, a couple $W$, $D$ of gametes has a probability $1-c$ to lead to heterozygous offspring if conversion occurs in the zygote, whereas this probability becomes one if conversion occurs in the germline. We detail all $ \pi_{l,k}^i $ values in Appendix \ref{an:growthterm}. For the sake of clarity, we now omit variables in the notation ($n_i = n_i(t,x)$).

The parameters are summarized in  Table~\ref{tab:parameters}.


\begin{table}[H]
\centering
\begin{tabular}{lll}
    \textbf{Parameters} & \textbf{Range values} & \textbf{Description}\\
    $r$  & $(0,+ \infty)$ &  Intrinsic growth rate \\
   $c$  & $[0,1]$ &  Conversion rate \\
   $s$ & $(0,1)$ & Fitness cost of drive homozygotes  \\
   $h$  & $[0,1]$ & Drive dominance \\
\end{tabular}
\caption{\label{tab:parameters} Model parameters.}
\end{table}

In this article, we will analyse the three following versions/variations of model~\eqref{eq:mastereq}:

\textbf{Partial conversion occurring in the zygote:}\begin{equation} \label{eq:par_zyg}
\left\{ \small
    \begin{array}{ll}
       \partial_t \n{DD} - \partial_{xx}^2 \n{DD}  = (1-s) (r \ (1-n)+1) \ \dfrac{ c   \ \n{WW} \n{DW} + 2\  c \  \n{WW} \n{DD} + (\frac{1}{2} \ c + \frac{1}{4}) \  \n{DW}^2 +  (c+1) \   \n{DW} \n{DD} + \n{DD}^2 }{n}  - \n{DD},\\ 
         \\
       \partial_t \n{DW} - \partial_{xx}^2 \n{DW} =  (1-sh) (r \ (1-n)+1) \  \ (1-c)  \ \dfrac{  \n{WW} \n{DW} + 2 \ \n{WW} \n{DD} + \frac{1}{2} \  \n{DW}^2 +  \n{DW} \n{DD}}{n} - \n{DW}, \\ 
         \\
        \partial_t \n{WW} - \partial_{xx}^2 \n{WW} = (r \ (1-n)+1) \ \dfrac{ \n{WW}^2 +  \n{WW} \n{DW} + \frac{1}{4} \ \n{DW}^2}{n} - \n{WW} . 
    \end{array}
    \right.
\end{equation}

\textbf{Partial conversion occurring in the germline:}\begin{equation}
\label{eq:par_ger}
 \left\{ \small
    \begin{array}{ll}
      \partial_t \n{DD} - \partial_{xx}^2 \n{DD} =  (1-s) (r \ (1-n)+1) \ \dfrac{ \frac{1}{4} \ (1+c)^2 \  \n{DW}^2 +  (1+c) \ \n{DW} \n{DD} + \n{DD}^2  }{n}  - \n{DD} ,\\
        \\
      \partial_t \n{DW} - \partial_{xx}^2 \n{DW} =  (1-sh) (r \ (1-n)+1) \ \dfrac{ (1+c) \ \n{WW} \n{DW} + 2 \ \n{WW} \n{DD} + \frac{1}{2} \ (1-c^2) \ \n{DW}^2 +  (1-c)  \ \n{DW} \n{DD} }{n}  - \n{DW}, \\
        \\
      \partial_t \n{WW} -  \partial_{xx}^2 \n{WW} =  (r \ (1-n)+1) \  \dfrac{ \n{WW}^2 +  (1-c) \  \n{WW} \n{DW} + \frac{1}{4} \ (1-c)^2 \  \n{DW}^2 }{n}  -  \n{WW} .
    \end{array}
\right. 
\end{equation}


\textbf{Perfect conversion occurring in the zygote (no heterozygotes):}

For a perfect conversion occurring in the zygote ($c=1$), model (\ref{eq:par_zyg}) reduces to the following set of two equations, which was introduced in \cite{girardin2021}:

\begin{equation} \label{eq:per_zyg}
\left\{
    \begin{array}{ll}
       \partial_t \n{DD} - \partial_{xx}^2 \n{DD} & = (1-s) \ \Big(r  \ (1-\n{DD}-\n{WW}) + 1 \Big) \ \dfrac{\n{DD}^2 + 2 \ \n{WW} \n{DD}}{\n{WW}+\n{DD}} - \n{DD} = F_D(\n{DD}, \n{WW})\\
       \\
       \partial_t \n{WW} - \partial_{xx}^2 \n{WW} & =  \Big(r \ (1-\n{DD}-\n{WW}) + 1 \Big)  \ \dfrac{\n{WW}^2}{\n{WW}+\n{DD}} - \n{WW} = F_W(\n{DD}, \n{WW}).
    \end{array}
\right.
\end{equation}

This last model only follows the two homozygous genotypes, drive and wild-type. Due to perfect gene conversion ($c=1$), no heterozygous individuals are ever produced: heterozygous eggs are all transformed into homozygotes. Further assuming that there are no heterozygotes initially, we only need to follow the densities of homozygotes. 


Note that system \eqref{eq:per_zyg} can also be obtained from model~\eqref{eq:par_ger} by assuming perfect conversion in the germline ($c = 1$) and drive dominance ($h = 1$). In this case, heterozygotes and drive homozygotes have the same fitnesses, and both only produce gametes with the drive allele. We can then group them together and follow their density $\n{DW}+\n{DD}$, whose dynamics are given by the first line of \eqref{eq:per_zyg}.



\subsection{Setting of the problem}\label{approach}

\subsubsection*{Traveling waves}

We seek stationary solutions in a reference frame moving at speed $v$, where $v$ is some unknown: 

\begin{equation} \label{eq:trav_waves}
\left\{
    \begin{array}{ll}
       \n{DD}(t,x) = \n{DD}(x-vt) \quad \quad (\forall t >0) \ (\forall x \in \mathbb{R}), \\
       \n{DW}(t,x) = \n{DW}(x-vt) \quad \quad (\forall t >0) \ (\forall x \in \mathbb{R}), \\
       \n{WW}(t,x) = \n{WW}(x-vt) \quad \ \  (\forall t >0) \ (\forall x \in \mathbb{R}).\\
    \end{array}
\right.
\end{equation}



Traveling wave solutions contain important information for the biological interpretation of the results, such as the speed of invasion $v$, the genetic composition of the expanding population, or the final equilibrium. In this paper, we focus our study on this mathematical object and detail below the vocabulary we use. Key to our analysis are the notions of monostable or bistable systems, and whether the traveling wave is pulled or pushed. There may be confusion around these concepts in the literature, so we clarify their definitions below.

\subsubsection*{Numerical simulations}

We complement our mathematical analysis with numerical simulations of the Cauchy problem, with initial conditions for each genotype specified as in Figure \ref{fig:CI}. The outcomes of the simulations are heatmaps of the expansion speed over a wide range of parameters.

Initial conditions for numerical simulations are as follows: the left half of the domain is full of drive ($\n{DD} = 1$), and the right half is full of wild-type  ($\n{WW} = 1$) (see Figure \ref{fig:CI}). 

\begin{figure}[H]
        \centering
     \includegraphics[scale = 0.7]{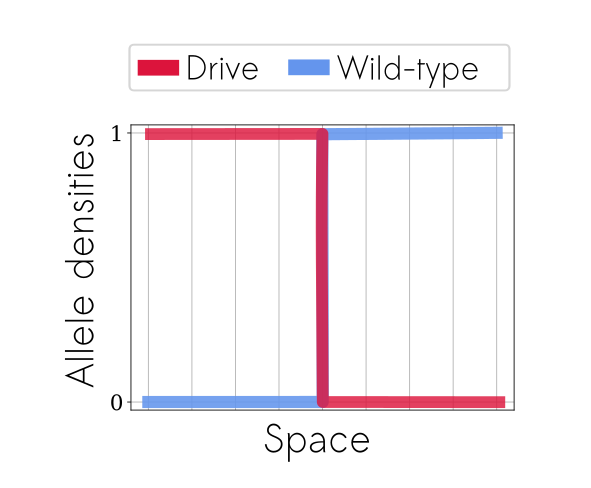}
        \caption{Initial conditions for numerical simulations. The left half of the domain is full of drive ($\n{DD} = n_{_{D}} = 1$), and the right half is full of wild-type  ($\n{WW} = n_{_{W}} = 1$).}
        \label{fig:CI}
\end{figure}

The code is available on GitHub (\url{https://github.com/LenaKlay/gd_project_1}). We ran our simulations in Python 3.6, with the Spyder environment. Heavy heatmaps \ref{fig:heatmap_zyg_coex}, \ref{fig:heatmap_zyg_bist}, \ref{fig:heatmap_ger_coex}, \ref{fig:heatmap_ger_bist}; \ref{fig:continuity} have been computed thanks to the INRAE Migale bioinformatics facility (doi: 10.15454/1.5572390655343293E12). We are grateful to them for providing these computing resources.

\subsection{Glossary}


\subsubsection*{Allelic densities and frequencies}

For our analysis, it is convenient to introduce the allelic (half-) densities $(\n{D}, \n{W})$. The precise definition depends on the model, and more specifically on the timing of conversion. In fact, we have $\n{D} = \n{DD} + \alpha \  \n{DW}$ and $\n{W} = \n{WW} + (1-\alpha) \ \n{DW}$, with $\alpha = \frac12$ when conversion occurs in the zygote, and $\alpha = \frac{1+c}2$ when conversion occurs in the germline (see section \ref{subsec:partial}). Depending on the regime of parameters, it may be more appropriate to study the allelic frequencies $\p{D} = \frac{\n{D}}{\n{D} + \n{W}}$,  $\p{W} = \frac{\n{W}}{\n{D} + \n{W}}$. 

\subsubsection*{Classification of the dynamics}

It can happen that the dynamics lead to the decay of the drive allele uniformly in space. In this case, there cannot exist a traveling wave for the drive population: we use the term \textit{gene drive clearance} to describe this situation. Then, the  problem boils down to the standard Fisher-KPP traveling wave problem for the expansion of the wild-type in the absence of a drive (see \cite{girardin2021}).




When traveling waves do exist, we distinguish between two cases depending on the sign of the speed. When $v > 0 $, the wave moves to the right: it is a \textit{drive invasion}.  When $v < 0 $, the wave moves to the left: it is a \textit{wild-type invasion}.
In some specific cases, drive and wild-type invasions can happen simultaneously: the waves decompose into two sub-traveling wave solutions over half of the domain. They move in opposite directions and lead to the  coexistence of both alleles in-between. 

In case of drive invasion, we distinguish several cases depending on the state of the population in the wake of the front(s): i) {\em eradication} drives are those for which the population vanishes in the wake of the front(s); ii) {\em suppression} drives are those for which population persists in the wake of the front(s). In the latter case, two scenarios are possible: persistence of drive homozygotes only; persistence of all genotypes.   




\subsubsection*{Monostable / Bistable systems}

To illustrate useful concepts in the theory of reaction-diffusion equations, we consider the following standard equation of population genetics \cite{otto2011} describing the dynamics of the frequency $p$ of an allele of interest: 
\begin{equation}\label{eq:standard_genetics}
    \partial_t p - \partial_{xx}^2 p = p \ (1-p) \ \sigma(p)  \quad \quad \text{with} \ p \in [0,1],
\end{equation}
where $\sigma(p)$ is the selection term, which we consider frequency-dependent (i.e., function of $p$). 

If $\sigma$ is of constant sign, say $\sigma>0$, this equation is referred to as a monostable case. Then, the solution converges locally to the unique stable equilibrium $p=1$ (or $p=0$ if $\sigma<0$). If $\sigma$ is changing signs once in $(0,1)$, being negative below some threshold, and positive above, it is referred to as a bistable case. In the latter case, the solution converges locally to one of the two stable equilibria $p=0$ or $p=1$, depending on the initial condition. Moreover, each equilibria has a basin of attraction and there is a threshold effect -- hence the name ``threshold-dependent drives'' in the gene drive literature to describe this kind of case (for example in reference \cite{tanaka2017}). 

In both cases, there exist traveling waves connecting the two equilibria $p=0$ and $p=1$. A straightforward integration by parts shows that, whatever the stability, the sign of the wave speed satisfies
\begin{equation}\label{eq:integral}
    \operatorname{sign}(v)=\operatorname{sign}\left(\int_0^1 p(1-p)\sigma(p) dp \right).
\end{equation}
In monostable cases with $\sigma>0$, this sign is positive; in bistable cases, however, it depends on the details of the frequency-dependence $\sigma$. 
Moreover, under some circumstances (bistable case, or degenerate monostable case), the invasion outcome for the Cauchy problem can be changed by modifying the inoculum size. Even if traveling waves exist such that $p=1$ is invading $p=0$, small initial conditions may not succeed in propagating in space, see the discussion in  \cite{tanaka2017,turelli2017,nadin2018}.

\lk{By analogy with the scalar case, we consider that a system is monostable if it has exactly one stationary stable state, and bistable if it has exactly two stationary stable states.}

\subsubsection*{Pulled and pushed waves}\label{pulled_pushed} 
Usually, a wave is said to be pulled if the wave speed coincides with the minimal speed of the linearized problem at low density (resp.\ low frequency). This occurs when the population at low density (resp.\ low frequency) has sufficient reproductive success to determine the dynamics of the full invasion. 


Conversely, a wave is said to be pushed if the wave speed is strictly larger than the minimal speed of the linearized problem. In contrast with pulled waves, the whole population contributes to the dynamics of invasion.


A bistable wave is clearly pushed \cite{hadeler1975}. However, a monostable wave can be either pulled or pushed, see \cite{an2022, holzer2022, birzu2018} and discussion therein. Nonetheless, a monostable wave is necessarily pulled if the per-capita growth rate is maximal at low density (resp. low frequency). In the particular case of the scalar problem \eqref{eq:standard_genetics}, this criterion simply writes:
\begin{equation}\label{eq:pull_cri}
     \sigma(0) \geq (1-p) \ \sigma(p) \quad\quad(\forall p\in [0,1]).
\end{equation}

\newpage

\section{Results}\label{sec:results}

In part \ref{subsec:perfect}, we study the model with perfect conversion in the zygote (\ref{eq:per_zyg}) and compare the qualitative behavior of the solution when $r = 0$ and $r = + \infty$. In part \ref{subsec:partial}, we proceed the same way on models with partial conversion (\ref{eq:par_zyg}) and (\ref{eq:par_ger}), obtaining more general results.

\subsection{Model with perfect conversion in the zygote}\label{subsec:perfect}

\subsubsection{Preliminary statements on the model}\label{subsubsubsec:preliminary_perfect_zygote}


We introduce a few general results on model (\ref{eq:per_zyg}) when $r>0$, which will be useful in the study.\\

When $s \leq \frac{1}{2}$, system (\ref{eq:per_zyg_p}) is monostable: the only stable state is $(\n{DD}= \n{DD}^* , \n{WW}=0)$ with $ \n{DD}^* = \min(0,1 - \frac{s}{r (1-s)})$ \cite{girardin2021}, leading to a drive invasion if any. We introduce the minimal speed of problem \eqref{eq:per_zyg} linearized at low drive density, i.e. the speed of any pulled wave in case of a drive invasion: 

\begin{equation}\label{eq:lin_speed_zyg_per_drive}
    2 \sqrt{ \partial_{\n{DD}} F_D(0, 1) } = 2 \sqrt{1-2s}.
\end{equation}


When $s>\frac12$, system (\ref{eq:per_zyg_p}) is bistable. Consequently traveling waves are either semi-trivial ($\n{DD}=0$ identically, standard Fisher-KPP problem for $\n{WW}$) or pushed.\\

For our analysis, it will be convenient to rewrite model (\ref{eq:per_zyg}) so that it follows the frequency of the drive $\p{D} = \frac{\n{D}}{\n{D} + \n{W}} = \frac{\n{DD}}{\n{WW}+\n{DD}}$ (because $\n{DW}=0$) and total population density $n = \n{WW}+\n{DD} $ (details in \ref{an:rewrite1}): \begin{equation} \label{eq:per_zyg_p}
\left\{
    \begin{array}{ll}
       \partial_t \p{D} - \partial_{xx}^2 \p{D} &=  2 \ \partial_x ( \log n) \ \partial_x \p{D} + \left( r \ (1-n)+1 \right) \ s \ \p{D} \  (1-\p{D}) \ \left(\p{D}- \dfrac{2s -1}{s} \right),
       \\
   \partial_t n - \partial_{xx}^2 n  &=  \left( r \ (1-n)+1 \right) \left( 1-s+s(1-\p{D})^2 \right) \ n   - n.
    \end{array}
\right.
\end{equation} 

System~\eqref{eq:per_zyg_p} differs from standard equations often used in populations genetics as it contains an advection term $2 \ \partial_x ( \log n) \ \partial_x \p{D}$. \lk{This term appears when calculating $ \partial_{xx}^2 \p{D} =   \partial_{xx}^2 \dfrac{\n{DD}}{n}$ (details in \ref{an:rewrite1}) and} represents a demographic flux from denser to less dense areas, due to variations in population density. It is opposed to the spread of the (costly) drive allele (see Figure 2 \cite{girardin2021}). \lk{We observe a singularity for $n=0$ in both formulations of the system: in \eqref{eq:per_zyg} due to $\frac{1}{\n{DD}+\n{WW}}$ and in \eqref{eq:per_zyg_p} due to $\log(n)$. This should be handle with care.}

\subsubsection{$r = + \infty$}\label{subsubsec:perfect_rinf}

The limit of system \eqref{eq:per_zyg_p} when $r \rightarrow + \infty$ has already been determined in \cite{girardin2021}. Using the Strugarek-Vauchelet rescaling \cite{strugarek2016}, the following limit equation is obtained, which was also previously introduced in \cite{tanaka2017}: 

\begin{equation} \label{eq:per_zyg_rinf}
\partial_t \p{D} - \partial_{xx}^2 \p{D} =  \dfrac{s \ \p{D} \  (1-\p{D}) \ \left(\p{D}- \dfrac{2s -1}{s} \right)}{1-s+s(1-\p{D})^2}.
\end{equation}

Interestingly, equation \eqref{eq:per_zyg_rinf} is independent of the population density $n$ and it does not contain the advection term $2 \ \partial_x ( \log n) \ \partial_x \p{D}$. This is due to the fact that the population size $n(t,x)$ remains spatially homogeneous after the introduction of drive individuals, when $r\to + \infty$. Intuitively, so many offspring are produced at each generation that the carrying capacity is instantaneously restored, and losing a fraction $s$ of these offspring by selection has no consequence. Therefore the variations in population density ($n$), and consequently the demographic flux, are negligible.


Equation \eqref{eq:per_zyg_rinf} has a single parameter, the fitness cost of the drive $s$. The numerical value of the threshold for the transition from positive to negative speed ($\approx 0.70$) was already known \cite{tanaka2017,girardin2021}, and can be computed to arbitrary precision by the formula \eqref{eq:integral}. The numerical value of the threshold for the transition from pulled to pushed ($\approx 0.35$ up to two digits) was \lk{numerically} computed by a continuation method following \cite{avery2022,holzer2022}.




%
%
%

\vspace{-0.2cm}
\begin{table}[H]
\centering 
\begin{tabular}{|C{1.4cm}|C{3.4cm}|C{2.3cm}|C{2.8cm}|C{3.5cm}|}
    \hline
   \textbf{\textit{s} value} &$0<s \lesssim 0.35$ & $0.35 \lesssim s <1/2$ & $1/2<s \lesssim 0.70$ & $0.70 \lesssim s<1$ \\
    \hline
     \textbf{Stability} & \multicolumn{2}{c|}{Monostable} &   \multicolumn{2}{c|}{Bistable} \\
     \hline
   \textbf{Speed} & $v= 2 \sqrt{1-2s}$ & $v > 2 \sqrt{1-2s}$ & $v > 0$ & $v < 0$ \\
    \hline
  \textbf{Wave} &  Pulled wave &   \multicolumn{2}{c|}{Pushed wave} & Pushed wave \\
     \hline
   & \multicolumn{3}{c|}{Drive invasion}  & Wild-type invasion \\
  \vspace{-1.7cm} \textbf{Invasion}  & \multicolumn{3}{c|}{\includegraphics[scale = 0.5]{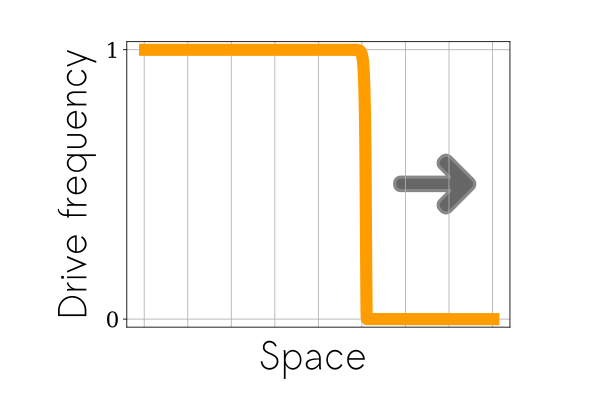}}  & \includegraphics[scale = 0.5]{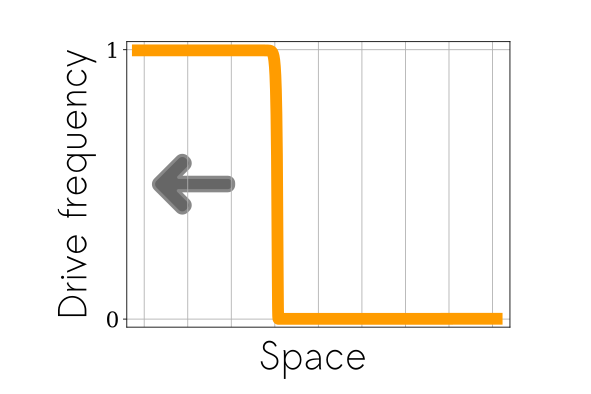} \\
   \hline
\end{tabular} 
\caption{Traveling waves study for Model \eqref{eq:per_zyg_rinf}, \lk{limit of system \eqref{eq:per_zyg} when $r = +\infty$}. All statements in the table are proved in Appendix \ref{ann:tab_perfect}.}
\label{tab:per_zyg_rinf}
\end{table}

Note that equation \eqref{eq:per_zyg_rinf} shows qualitative similarities with a common equation in the population genetics literature \cite{barton1979} (and which is actually an approximation of \eqref{eq:per_zyg_rinf} under weak selection, i.e. $s \to 0$): 


\begin{equation}
\partial_t \p{D} - \partial_{xx}^2 \p{D} =  s \ \p{D} \  (1-\p{D}) \ \left(\p{D}- \dfrac{2s -1}{s} \right).
\end{equation}

Quantitatively, the thresholds are $\frac{2}{5}$ instead of $0.35$, and $\frac{2}{3}$ instead of $0.70$ (analytical values) \cite{hadeler1975}.

\subsubsection{$r = 0$}\label{subsubsec:perfect_rzero}




When the intrinsic growth rate $r$ is finite, it is expected that the final population density after the invasion of the drive (if any) is strictly below $1$, because of the fitness cost. The smaller $r$, the lower the final size. The spatial effect of demography on gene drive expansion is expected to be maximal as $r$ vanishes, when the population can hardly restore its carrying capacity, leading to a high amplitude of the population size gradient $2 \ \partial_x ( \log n)$. In this section we focus on the limit $r = 0$, which maximizes the demographic impact of the fitness cost on drive propagation.

In a purely wild-type population, the case $r=0$ corresponds to a number of births balancing exactly the number of deaths. As soon as the drive allele is introduced, this balance is locally broken, yielding a net decrease in the population size. Then, the drive can either propagate by leaving empty space behind, or disappear. The same conclusion holds as long as $r<\frac{s}{1-s}$, see Section \ref{subsubsubsec:preliminary_perfect_zygote}.

We checked numerically that the wave speed is continuous in the limit $r\to 0$. Therefore, each conclusion on the case $r=0$ sheds some light on the case of small $r$ (see heatmap in Appendix \ref{ann:continuity}). 

As discussed above, we cannot just consider a single equation on the drive frequency $\p{D}$ when $r$ is finite because of the demographic contribution $2 \partial_x \log{n}$. Interestingly, in the case $r=0$, the demographic system \eqref{eq:per_zyg} reduces to the following pair of equations:



\begin{equation}\label{eq:per_zyg_r0}
\left\{
    \begin{array}{ll}
       \partial_t \n{WW} - \partial_{xx}^2 \n{WW} & = \  \dfrac{- \ \n{WW} \n{DD}}{\n{WW}+\n{DD}}, \\
       \\
       \partial_t \n{DD} - \partial_{xx}^2 \n{DD} & = \ (1-s)  \ \dfrac{\n{WW}  \n{DD}}{\n{WW}+\n{DD}} - s \ \n{DD}.
    \end{array}
\right.
\end{equation}\\

Noticeably, the previous system shares some features with density-dependent epidemiological SI models. In particular, the dynamics of $\n{WW}$ is always decreasing. The dynamics of $\n{DD}$ is the balance of creation and linear decay. By changing notations $\n{WW}\leftrightarrow S$ (susceptible individuals), and $\n{DD}\leftrightarrow I$ (infected individuals), \eqref{eq:per_zyg_r0} can be recast as follows:
    
\begin{equation}\label{eq:si}
\left\{
    \begin{array}{ll}
        \partial_t S - \partial_{xx}^2 S & =  \ - \beta_1 \ \dfrac{\ S \ I}{S + I},\\
        \\
       \partial_t I - \partial_{xx}^2 I & = \ \beta_2 \ \dfrac{ S \ I}{S + I} - \gamma I.\\
    \end{array}
\right.
\end{equation}
with $\beta_1 = 1$, $\beta_2 = (1-s)$ (transmission parameters), and $\gamma = s$ (disease clearance). Usually, in SI models, individuals of type $S$ are all transformed into individuals of type $I$ at infection, hence $\beta_1 = \beta_2$. In our case, these two rates are distinct because of the fitness cost of the drive. The existence of traveling waves for model \eqref{eq:per_zyg_r0} with $\beta_1 = \beta_2$ has been studied recently in the literature \cite{zhou2019}. Here, we extend the results of reference \cite{zhou2019} to a more general case $0<\beta_1$ and $0<\beta_2$. This leads to the characterisation in Table \ref{tab:per_zyg_r0} and Appendix \ref{ann:exist}.

\renewcommand{\arraystretch}{1.4}

\vspace{-0.2cm}
\begin{table}[H]
\centering
\begin{tabular}{|C{1.4cm}|C{6.5cm}|C{6.5cm}|}
    \hline
   \textbf{\textit{s} value} & $0<s<1/2$ & $1/2<s<1$ \\
    \hline
   \textbf{Stability} & Monostable & Degenerate case \\
    \hline
    \textbf{Speed} & $v =  2 \sqrt{1-2s} $  &  \multirow{2}{*}{No wave} \\
    \cline{1-2}
    \textbf{Wave} & Pulled wave & \\
   \hline
  &  Drive invasion  & Gene drive clearance \\
     \vspace{-1.7cm} \textbf{Invasion} &  \includegraphics[scale = 0.5]{images/2_maths/drive_invasion_prop.png} & \includegraphics[scale = 0.5]{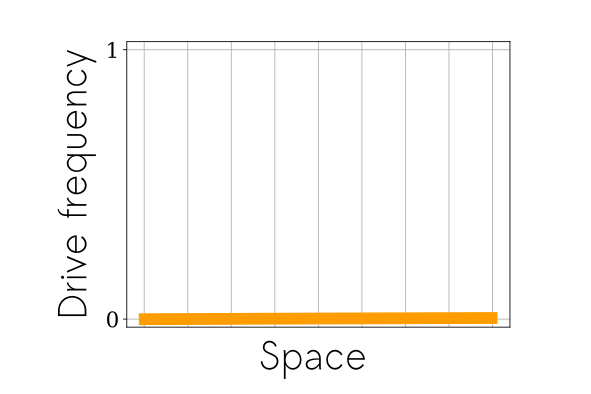}\\
   \hline
\end{tabular} 
\caption{Traveling waves study for Model \eqref{eq:per_zyg_r0}, \lk{limit of system \eqref{eq:per_zyg} when $r=0$}. All statements in the table are proved in Appendix \ref{ann:tab_perfect}.}
\label{tab:per_zyg_r0}
\end{table}

In contrast to the results obtained when $r = +\infty$, when $r = 0$ there is only one threshold value of $s$ determining the outcome of the model (Table \ref{tab:per_zyg_r0}). When $ 0 < s < 1/2$, the system is monostable, the drive necessarily invades. Moreover, the wave is pulled and travels at speed $v =  2 \sqrt{1-2s}$ \eqref{eq:lin_speed_zyg_per_drive}. When $1/2 < s < 1$, the problem is degenerate: there exists a family of steady states, corresponding to homogeneous $\n{WW} \in [0,1]$ and $\n{DD} = 0$. It is a case of gene drive clearance, as $\n{DD}$ converges to zero uniformly in space (at rate at least $1-2s$). However, the final density of wild-type is not clearly determined, as it boils down to diffusion only in the large time asymptotics (details in Appendix \ref{ann:gd_clearance}). Note that this conclusion holds in a well-mixed population (without spatial consideration): the drive decays uniformly and the final density of wild type depends on the initial data. 


\subsubsection{Comparison between the outcomes when $r = +\infty$ and $r = 0$}

The differences between the two regimes are strongest for intermediate values of $s$. When $1/2 < s \lesssim 0.70$, the drive can spread when the demographic consequences are negligible ($r = +\infty$). However, such a costly drive cannot invade when the intrinsic growth rate $r$ is very low ($r = 0$). When $0.35 \lesssim s < 1/2$, the drive wave advances for both $r = +\infty$ and $r = 0$. However, it is of different nature: the wave is pulled when $r = 0$, while it is pushed when $r = \infty$.  

By providing analytical results for $r = 0$, our study is complementary to \cite{girardin2021}, where the invasion outcome was described numerically in \cite[Figure 3.A]{girardin2021} together with a series of analytical estimates of the sign of the speed.

\newpage

\subsection{Models with partial conversion} \label{subsec:partial}

In this section, we extend the study to models \eqref{eq:par_zyg} and \eqref{eq:par_ger} with partial conversion. The models are reformulated in terms of allelic densities ($\n{D}$ and $\n{W}$) rather than genotype densities ($\n{DD}$, $\n{DW}$ and $\n{WW}$). This reformulation enables reducing the number of equations from three to two equations. Even if we are not able to determine the genotypic composition of the population (which individual genotype a gamete comes from, either homozygote or heterozygote), the spreading properties are equivalent. Interestingly, the same roadmap as in the full conversion model can be followed. 
Again, we focus on the two extreme regimes $r = + \infty $ and $r = 0$.





\subsubsection{Conversion occurring in the zygote} \label{subsubsec:partial_zyg}

When conversion occurs in the zygote, we can deduce the following system from model~\eqref{eq:par_zyg}, with $ \n{D} =  \n{DD} + \frac{1}{2} \  \n{DW}  $ and $ \n{W} =  \n{WW} + \frac{1}{2} \  \n{DW}  $: \begin{equation}\label{eq:par_zyg_nD_nW}
    \left\{
    \begin{array}{ll}
      \partial_t \n{D}  - \partial_{xx}^2 \n{D} \ = \  \n{D} \ \left[ \dfrac{ r \ (1-n)+1 }{n} \left[ \ (1-s)  (\n{D} +  2  \  c  \  \n{W} ) +   (1-sh)   \  (1-c)  \    \n{W}  \right]  - 1 \right] = F^z_D(\n{D}, \n{W}), \\
        \\
      \partial_t  \n{W} - \partial_{xx}^2  \n{W} = \   \n{W} \ \left[ \dfrac{ r \ (1-n)+1 }{n} \left[ \     \n{W}   +    (1-sh)  \  (1-c) \ \n{D}  \right]  - 1 \right] = F^z_W(\n{D}, \n{W}).
    \end{array}
\right. 
\end{equation}

The density $ \n{W}$ (resp. $\n{D}$) corresponds to one half of the wild-type (resp. drive) allele density at the time of zygote formation. When conversion happens in the zygote, heterozygous individuals are the result of conversion failures and produce one half of each type of gamete, drive or wild-type. 

\subsubsubsection{Preliminary statements on the model}\label{subsubsubsec:preliminary_partial_zygote}

This model brings more variety in terms of traveling waves than the previous one \eqref{eq:per_zyg}. Cases of monostable wild-type invasion can occur, as well as cases of monostable coexistence. We introduce first all the possible minimal speeds of the problem linearized at low densities and detail later under which parameters they arise.

The minimal speed of the problem linearized at low drive density, i.e. the speed of any pulled monostable wave with a positive speed is given by: \begin{equation} \label{eq:lin_speed_zyg_drive}
    2 \sqrt{\partial_{\n{D}} F^z_D(0,1)} =  2 \sqrt{ 2c \ (1-s) + (1-sh)(1-c)-1} = 2 \sqrt{c ( 1- 2s) - s h (1-c)},
\end{equation}
provided that the quantity is non-negative.
The minimal speed of the problem linearized at low wild-type density depends on the stable steady state of a population only bearing drive alleles (i.e. a population of drive homozygotes):  $\n{D}^* = \n{DD}^* = \min(0, 1 - \frac{s}{r(1-s)})$ \cite{girardin2021}. If $\n{D}^* = 1 - \frac{s}{r(1-s)}$, this minimal speed is given by:\begin{equation}\label{eq:lin_speed_zyg_wt_1}
    -2 \sqrt{\partial_{\n{W}} F^z_W(\n{D}^*, 0)} = - 2 \sqrt{ \frac{(1-sh)(1-c)}{1-s}-1}.
\end{equation}

If $\n{D}^* = 0 $, the minimal speed of the problem linearized at low wild-type density is given by the classical Fisher-KPP formula: \begin{equation}\label{eq:lin_speed_zyg_wt_2}
   - 2 \sqrt{\partial_{\n{W}} F^z_W(0, 0)} = - 2 \sqrt{r}.
\end{equation} 

Note that \eqref{eq:lin_speed_zyg_wt_2} is the only minimal speed depending on parameter $r$: it corresponds to the case of gene drive clearance, the only configuration where the drive allele disappears uniformly in space.\\


For our analysis, it will be convenient to rewrite model (\ref{eq:par_zyg_nD_nW}) so that it follows the frequency of the drive  $\p{D}=\frac{\n{D}}{\n{W} + \n{D}}$ and the total population density  $ n = \n{WW} + \n{DW} + \n{DD}  = \n{W} + \n{D}$  (details in \ref{an:rewrite2}):

\begin{equation}\label{eq:par_zyg_p}
\left\{ \small
    \begin{array}{ll}
      \partial_t n  - \partial_{xx}^2 n \ = & \big( r \ (1-n)+1 \big) \ \Big( (1-s) \ \p{D}^2 + 2 \ \p{D}  \ (1-\p{D}) \ [ c \ (1-s)  + (1-c) \ (1-sh)  ] + (1-\p{D})^2  \Big) n - n , \\
        \\
      \partial_t \p{D}  -  \partial_{xx}^2 \p{D}  \   = &    2 \ \partial_x \log(n) \ \partial_x \p{D}  \\
      & + \big( r \ (1-n)+1 \big) \Big(  \big[ 1 - 2 (1-c) (1-h) \big] \ s \  \p{D} - s [ 1-(1-c)(1-h)] + c (1-s)  \Big)  (1-\p{D})  \p{D} .
    \end{array}
\right. 
\end{equation}

\subsubsubsection{$r = + \infty$} \label{subsubsubsec:par_zyg_rinf}



Similarly as in Section \ref{subsubsec:perfect_rinf}, we compute formally the limiting equation on \eqref{eq:par_zyg_p} when $r \rightarrow +\infty$:\begin{equation}\label{eq:par_zyg_rinf}
    \partial_t \p{D}  -  \partial_{xx}^2 \p{D}  \ = \dfrac{ \Big(  \big[ 1 - 2 (1-c) (1-h) \big] \ s \  \p{D} - s [ 1-(1-c)(1-h)] + c (1-s)   \Big)  \ (1-\p{D}) \ \p{D} }{(1-s) \ \p{D}^2 + 2 \ \p{D}  \ (1-\p{D}) \ [ c \ (1-s)  + (1-c) \ (1-sh)  ] + (1-\p{D})^2}.
\end{equation}

\vspace{0.2cm}

Note that as in Section \ref{subsec:perfect}, this equation does not depend on $n$. We introduce:

\begin{equation}
    \mathscr{A}_z := s \ \big[ 2 (1-c) (1-h) - 1 \big], \quad \quad s_1 := \dfrac{c}{1-h(1-c)}, \quad \quad s_{2,z} := \dfrac{c}{2c + h(1-c)}.
\end{equation}

where $z$ stands for zygote. Note that $ \mathscr{A}_z  > 0 \iff s_1 < s_{2,z}$.\\

We distinguish between two cases, depending on the sign of $ \mathscr{A}_z $. If $\mathscr{A}_z > 0$,  which can only happen if we have both $c<\frac{1}{2}$ and $h<\frac{1}{2}$, the system is always monostable. We observe a drive invasion for $s < s_1$, a coexistence state for $s_1 < s < s_{2,z}$ and a wild-type invasion for $s_{2,z} < s$. Criterion \eqref{eq:pull_cri} is always verified (see Appendix \ref{ann:pull_zyg}), consequently every traveling wave (or sub-traveling wave in case of coexistence) is pulled, moving at speed \eqref{eq:lin_speed_zyg_drive} or \eqref{eq:lin_speed_zyg_wt_1}. These statements are summarized in Table \ref{tab:par_zyg_rinf_coex}. \\

\renewcommand{\arraystretch}{1.4}
 
\vspace{-0.2cm}
\begin{table}[H]
\centering 
\begin{tabular}{|C{1.4cm}|C{4cm}|C{4cm}|C{4cm}|}
    \hline
   \textbf{\textit{s} value} & $s <s_1 $ & $ s_1 < s < s_{2,z}$ & $  s_{2,z} < s $ \\
    \hline
     \textbf{Stability} & \multicolumn{3}{c|}{Monostable}  \\
     \hline
   \textbf{Speed} & $v=v_{lin+}$ & $v=v_{lin+}$ and $v=v_{lin-}$  &  $v=v_{lin-}$ \\
    \hline
  \textbf{Wave} &  \multicolumn{3}{c|}{Pulled Wave}    \\
     \hline
   & Drive invasion & Coexistence & Wild-type invasion \\
  \vspace{-1.7cm} \textbf{Invasion}  & \includegraphics[scale = 0.5]{images/2_maths/drive_invasion_prop.png}  & \includegraphics[scale = 0.5]{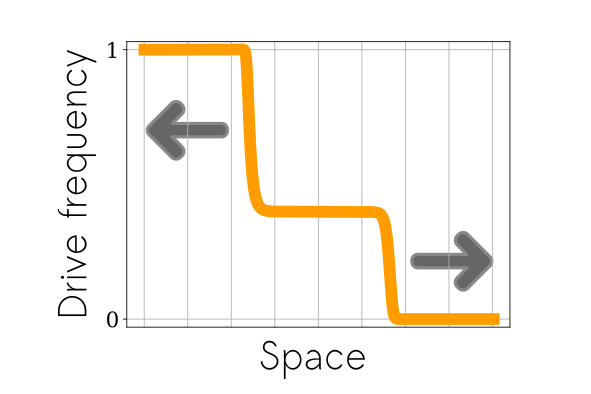}  & \includegraphics[scale = 0.5]{images/2_maths/wild_type_invasion_prop.png} \\
   \hline
\end{tabular} 
\caption{Traveling waves study for Model \eqref{eq:par_zyg_rinf} \lk{(limit of system \eqref{eq:par_zyg_p} when $r=+ \infty$)} when $\mathscr{A}_z > 0$, with $v_{lin+} = 2 \sqrt{c ( 1-2s) - s h (1-c)}$ \eqref{eq:lin_speed_zyg_drive} and $v_{lin-} = - 2 \sqrt{ \frac{(1-sh)(1-c)}{1-s}-1}$ \eqref{eq:lin_speed_zyg_wt_1}. \lk{For all values of $s$, there exists only one stable state (monostability). In particular in the case of coexistence, the stable state (in the center) invades two unstable states (on the right and left)}.}
\label{tab:par_zyg_rinf_coex}
\end{table}

Now we turn to the case $\mathscr{A}_z < 0$. The system is monostable for $s < s_{2,z}$ (drive invasion), bistable for $s_{2,z} < s < s_1$ and monostable for $s_1 < s$ (wild-type invasion). In case of monostable drive invasion, we define a set $\mathscr{S}_z$ of $s$ values: \begin{equation}
    \mathscr{S}_z := \Big\{ s \in (0,1) | \Big( 1 - 2  s [1 - (1-h)(1-c)] \Big) \Big(  (- 2  c - h +  ch) s  + c \Big) +  s \  \big[ 2 (1-c) (1-h) - 1 \big] > 0 \Big\} .
\end{equation}

For all $s$ in $\mathscr{S}_z$ and $\mathscr{A}_z < 0$, criterion \eqref{eq:pull_cri} is verified and consequently, there exists a pulled monostable traveling wave with positive speed (see Appendix \ref{ann:pull_zyg}). Note that such $s$ values are necessarily strictly below $s_{2,z}$, condition for a monostable drive invasion. In case of wild-type invasion, criterion \eqref{eq:pull_cri} is never verified (see Appendix \ref{ann:pull_zyg}). These statements are summarized in Table \ref{tab:par_zyg_rinf_bist}.

\renewcommand{\arraystretch}{1.4}

\vspace{-0.2cm}
\begin{table}[H]
\centering 
\begin{tabular}{|C{1.4cm}|C{3.4cm}|C{2.4cm}|C{2.8cm}|C{3.5cm}|}
    \hline
   \textbf{\textit{s} value} & $s \in \mathscr{S}_z $ & $s \in (0,s_{2,z}) \setminus \mathscr{S}_z  $ & $s_{2,z}<s < s_1$ & $s_1 < s<1$ \\
    \hline
     \textbf{Stability} & \multicolumn{2}{c|}{Monostable} &   Bistable & Monostable \\
     \hline
   \textbf{Speed} & $v=v_{lin+}$ & $v \geq v_{lin+}$ & & $v < 0$ \\
    \hline
  \textbf{Wave} &  Pulled wave &   &  Pushed wave &  \\
     \hline
   & \multicolumn{2}{c|}{Drive invasion} &  & Wild-type invasion \\
  \vspace{-1.7cm} \textbf{Invasion}  & \multicolumn{2}{c|}{\includegraphics[scale = 0.5]{images/2_maths/drive_invasion_prop.png}}  & \vspace{-2.3cm}  Drive or Wild-type invasion & \includegraphics[scale = 0.5]{images/2_maths/wild_type_invasion_prop.png} \\
   \hline
\end{tabular} 
\caption{Traveling waves study for Model \eqref{eq:par_zyg_rinf} \lk{(limit of system \eqref{eq:par_zyg_p} when $r=+ \infty$)} when $\mathscr{A}_z < 0$, with $v_{lin+} = 2 \sqrt{c ( 1- 2s) - s h (1-c)}$ \eqref{eq:lin_speed_zyg_drive}.}
\label{tab:par_zyg_rinf_bist}
\end{table}

 \subsubsubsection{$r = 0$} \label{subsubsubsec:par_zyg_r0}




Using the relation $ n = \n{WW} + \n{DW} + \n{DD}  = \n{W} + \n{D}$, system \eqref{eq:par_zyg_nD_nW} can be rewritten as follows when $r = 0$: 
\begin{equation}\label{eq:par_zyg_r0}
\left\{
    \begin{array}{ll}
      \partial_t \n{D}  - \partial_{xx}^2 \n{D} \ =  \ \Big( c \ (1-s) + s \ (1-c)  \ (1-h) \Big)  \  \dfrac{\n{D} \   \n{W}}{ \n{D} +  \n{W}}   - s \  \n{D}, \\
        \\
      \partial_t  \n{W} - \partial_{xx}^2  \n{W} =  \ -  \Big(  1 - (1-sh) \ (1-c)  \Big) \ \dfrac{\n{D} \   \n{W}}{\n{D} +  \n{W}}.
    \end{array}
\right. 
\end{equation}

We apply the results of Appendix \ref{ann:exist} with $\beta_1 =  1 - (1-sh) \ (1-c) $ and $\beta_2 =  c \ (1-s) + s \ (1-c)  \ (1-h) $. There exists a monostable and pulled drive invasion wave if:
\begin{equation}
    \beta_2 > \gamma \ \ \iff \ \ s <  s_{2,z} = \dfrac{c}{2 c + h(1-c)}  .
\end{equation}

On the other hand when $ \beta_2 < \gamma$, the reaction term of $\n{D}$ in \eqref{eq:par_zyg_r0} is strictly negative. As before, the density $\n{D}$ converges to zero uniformly in space at rate at least $\beta_2 - \gamma$ (gene drive clearance) and the final density of wild-type is not clearly determined: the problem boils down to diffusion only in the large time asymptotics (details in Appendix \ref{ann:gd_clearance}). These statements are summarized in Table \ref{tab:par_zyg_r0}.\\

\renewcommand{\arraystretch}{1.4}

\vspace{-0.2cm}
\begin{table}[H]
\centering
\begin{tabular}{|C{1.4cm}|C{6.5cm}|C{6.5cm}|}
    \hline
   \textbf{\textit{s} value} & $0<s<s_{2,z}$ & $s_{2,z}<s<1$ \\
    \hline
   \textbf{Stability} & Monostable & Degenerate case \\
    \hline
    \textbf{Speed} & $v = v_{lin+} $  &  \multirow{2}{*}{No wave} \\
    \cline{1-2}
    \textbf{Wave} & Pulled wave & \\
   \hline
  &  Drive invasion  & Gene drive clearance \\
     \vspace{-1.7cm} \textbf{Invasion} &  \includegraphics[scale = 0.5]{images/2_maths/drive_invasion_prop.png} & \includegraphics[scale = 0.5]{images/2_maths/gene_drive_clearance_prop.png}\\
   \hline
\end{tabular} 
\caption{Traveling waves study for Model \eqref{eq:par_zyg_r0} \lk{(limit of system \eqref{eq:par_zyg_p} when $r=0$)}, with $v_{lin+} = 2 \sqrt{c ( 1- 2s) - s h (1-c)}$ \eqref{eq:lin_speed_zyg_drive}.}
\label{tab:par_zyg_r0}
\end{table}

Note that, when $\mathscr{A}_z > 0$, a condition for having a pulled wave with positive speed for both $r = 0$ and $r = + \infty$ is $s < s_{2,z}$. When $\mathscr{A}_z < 0$, a condition for having a pulled wave with positive speed for both $r = 0$ and $r = + \infty$ is $ s \in \mathscr{S}_z \subseteq [0, s_{2,z}]$. This suggests that, under those conditions, whatever the value of the demographic parameter $r$ is, the drive invasion wave is always pulled and consequently, travels at a speed which does not depend on $r$ either (speed given by \eqref{eq:lin_speed_zyg_drive}). We verify this intuition numerically (vertical level lines) in the following section.

\subsubsubsection{Numerical illustrations}

$\underline{\mathscr{A}_z > 0}$

In a first example we choose $c=0.25$ and $h=0.1$ such that $\mathscr{A}_z > 0$. The $s$ threshold values are $s_1 \approx 0.27$ and $s_{2,z} \approx 0.43$. As discussed in the previous sections, when $s < s_{2,z}$, all waves are pulled (sub-)traveling waves for $ r = + \infty$ and $ r = 0$. Note that $s > s_1$ is the condition for the existence of pulled (sub-)traveling waves with negative speed only when $ r = + \infty$. 

We show the value of the speed \eqref{eq:lin_speed_zyg_drive} and \eqref{eq:lin_speed_zyg_wt_1} of the pulled waves as a function of $s$ when $ r = + \infty$, with $c=0.25$ and $h=0.1$ (in Figure \ref{fig:lin_speed_ex1}).

\begin{figure}[H]
        \centering
     \includegraphics[scale = 0.4]{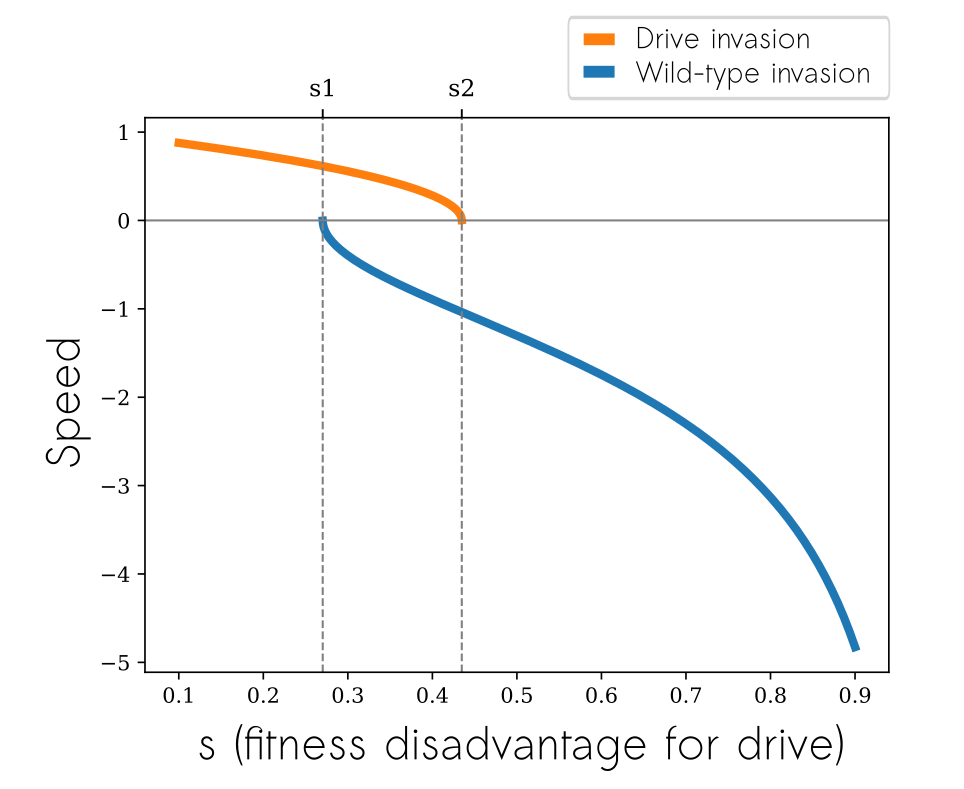}
        \caption{Speed of the drive invasion \eqref{eq:lin_speed_zyg_drive} in orange and speed of the wild-type invasion \eqref{eq:lin_speed_zyg_wt_1} in blue, as a function of $s$ when $ r = + \infty$, with $c=0.25$ and $h=0.1$.}
         \label{fig:lin_speed_ex1}
\end{figure}

Note that for $ 0.27 \approx s_1 < s < s_{2,z} \approx 0.43$, we have both a drive and a wild-type invasion, leading to a stable coexistence state. This case is illustrated in Figure \ref{fig:coex_illu} with $s=0.35$.

\begin{figure}[H]
\centering
\begin{subfigure}{0.328\textwidth}
    \centering
    \includegraphics[scale = 0.7]{images/2_maths/ci.png}
    \caption{$t=0$}
\end{subfigure}
\hfill
\begin{subfigure}{0.328\textwidth}
    \centering
    \includegraphics[scale = 0.7]{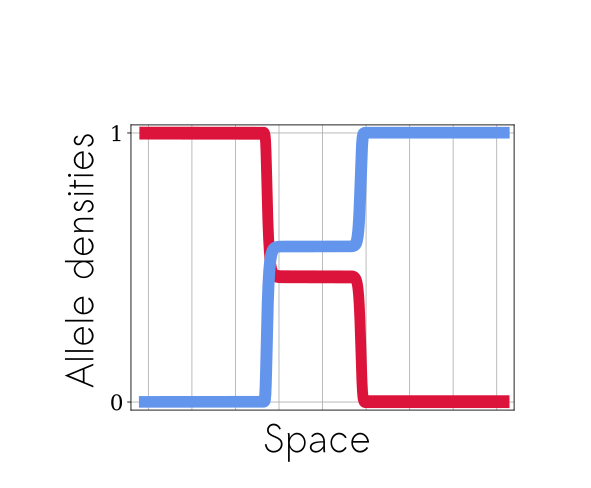}
    \caption{$t=1000$}
\end{subfigure}
\hfill
\begin{subfigure}{0.328\textwidth}
    \centering
    \includegraphics[scale = 0.7]{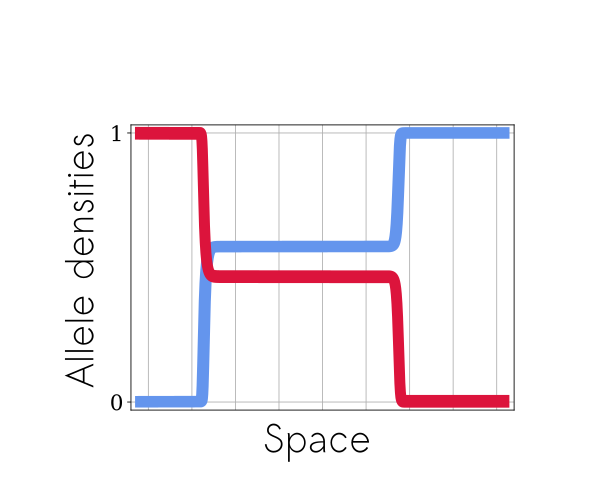}
    \caption{$t=2000$}
\end{subfigure}
\caption{Allele densities as a function of space, at different times, for $s=0.35$, $c=0.25$, $h=0.1$ and $r=3$.}
    \label{fig:coex_illu}
\end{figure}

We now compute numerically the values of the wave speed for intermediate values of $r$ (in Figure \ref{fig:heatmap_zyg_coex}). In case of coexistence, for $ s_1 < s < s_{2,z} $, we choose to show only the positive speed value.


\begin{figure}[H]
     \begin{subfigure}{\textwidth}
    \centering
    \includegraphics[scale = 0.5]{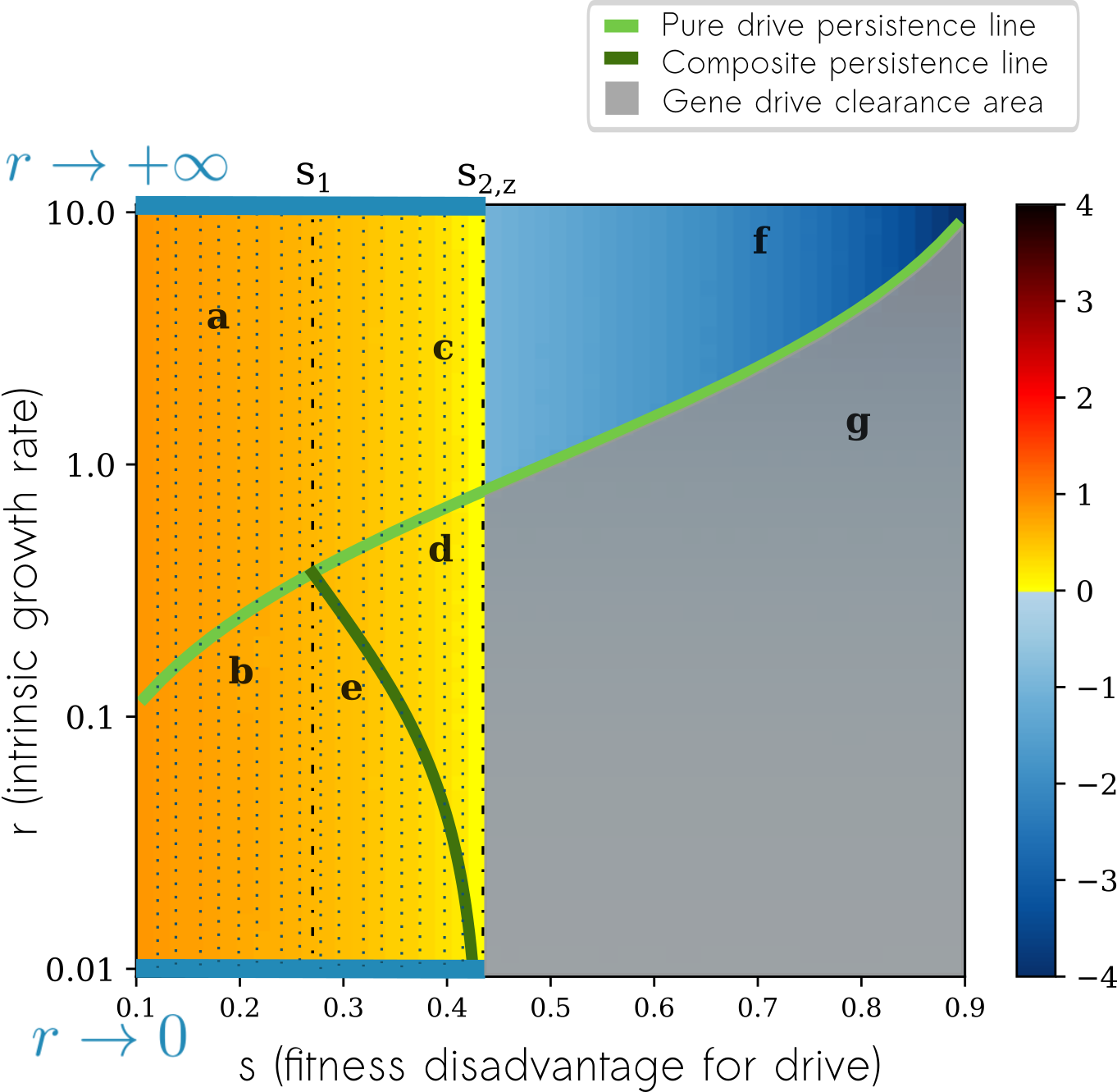}
    \caption{\lk{Heatmap representing the speed of the wave for $c=0.25$, $h=0.1$ when conversion occurs in the zygote. \vspace{0.4cm}}}
\end{subfigure}
\hfill
\begin{subfigure}{\textwidth}
    \centering
    \includegraphics[scale = 0.5]{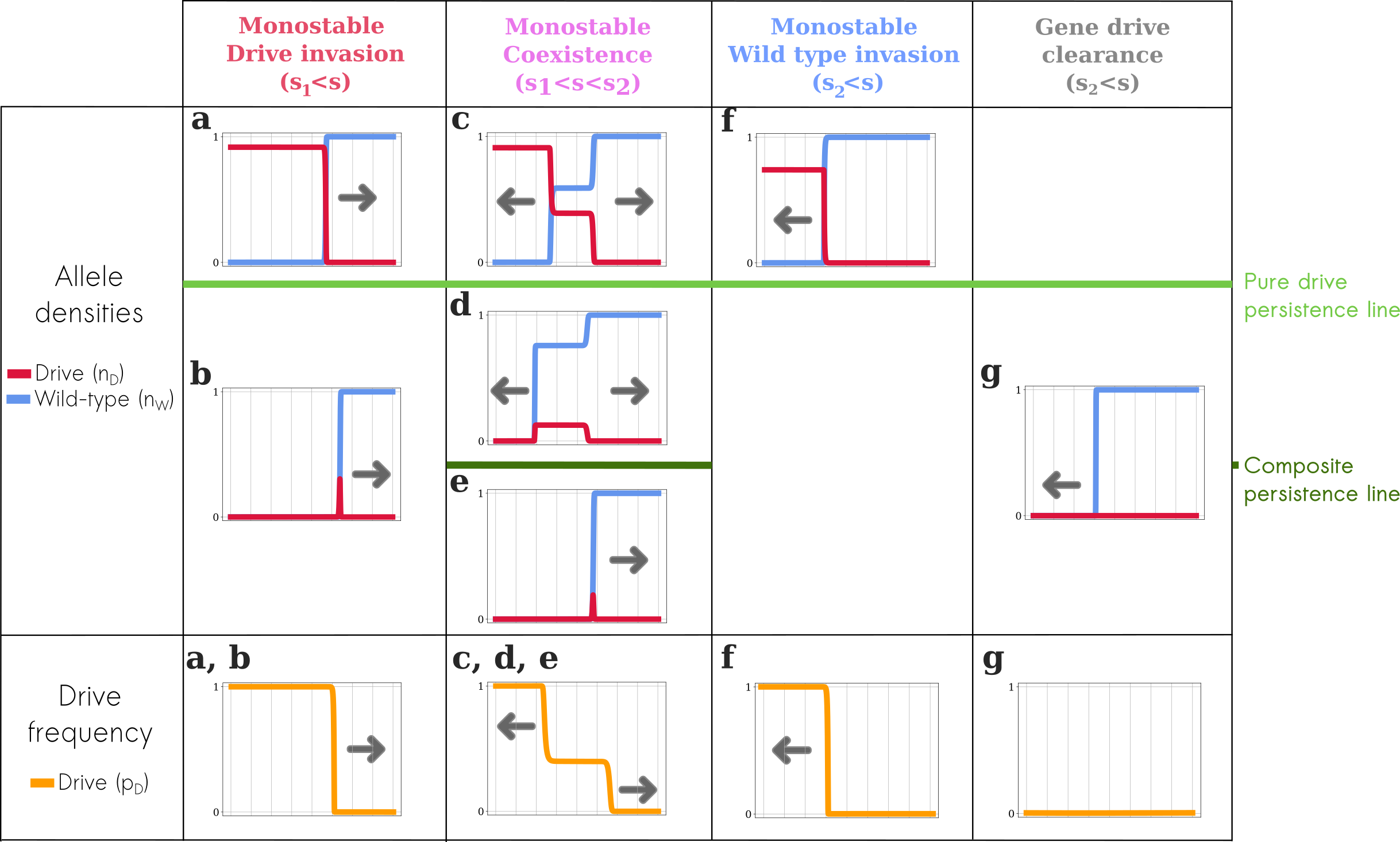}
    \caption{\lk{Illustrations of the shape of the wave (allele densities and drive frequency) for each corresponding case in the heatmap.}}
\end{subfigure}
        \caption{\lk{(A) Heatmap representing the speed of the waves for $c=0.25$, $h=0.1$ when conversion occurs in the zygote.} When the drive invades the population, the speed is positive (in yellow-orange). On the contrary, when the wild-type invades the population, the speed is negative (in blue). When both drive and wild types invade (coexistence), only the speed of the drive is shown in the heatmap, resulting in an apparent discontinuity at $s=s_{2,z}$. As $\mathscr{A}_z > 0$, the system is always monostable for $r = + \infty$: when $ s < s_1$ the drive always invades; when $ s_1 < s < s_{2,z}$ the final state is a coexistence state; when $ s > s_{2,z}$ the wild-type invades or there is gene drive clearance. The turquoise horizontal lines at the bottom and at the top of the heatmap indicate the theoretical values of $s$ such that there exists a pulled wave with positive speed, respectively for $r = + \infty$ and $r = 0$. Below the pure drive persistence line (light green), a well-mixed population containing only drive homozygous individuals will necessarily go extinct. Below the composite persistence line (dark green), it is the whole population that goes extinct (calculations for both lines available in Appendix \ref{ann:persistence}). The gray zone corresponds to the gene drive clearance area. Outside the gray zone, the level lines are apparently vertical, meaning that the wave speed would be independent of $r$. This is in agreement with the fact that the values of the speed coincide when $r = + \infty$ and $r=0$ for $s <s_{2,z}$. If correct, the value of the speed can be found in Figure \ref{fig:lin_speed_ex1}. \lk{(B) Shape of the wave for each case indicated by a letter in the heatmap above. The position of the graphs in the table reflects the position in the heatmap with respect to the persistence lines.}}
        \label{fig:heatmap_zyg_coex}
\end{figure}


For a better understanding of Figure \ref{fig:heatmap_zyg_coex}, we detail the effect of fitness disadvantage $s$ and dominance coefficient $h$ on drive dynamics for $r = + \infty$ and $c=0.25$, without spatial structure, in Appendix \ref{ann:rode_debarre} (in Figure \ref{ann:rode_zyg_c25}).

In Figure \ref{fig:heatmap_zyg_coex}, the speed value for $s<s_{2,z}$ seems not to depend on the demographic parameter $r$: whatever the final equilibrium is, going from population extinction to full replacement of the wild-type genotypes by drive genotypes, the invasion occurs at the same speed. This is in agreement with the fact that the values of the speed coincide when $r = + \infty$ and $r=0$ for $s <s_{2,z}$. If correct, the value of the speed can be found in Figure \ref{fig:lin_speed_ex1}.





\vspace{0.2cm}

$\underline{\mathscr{A}_z < 0}$

In a second example, we choose $c = 0.75$ and $h=0.1$ such that $\mathscr{A}_z < 0$. The $s$ threshold values are $s_1 \approx 0.77 $ and $ s_{2,z} \approx 0.49$. As discussed in the previous section, when $s \in \mathscr{S}_z$ (in our case $ s \lesssim 0.38$), all waves are pulled traveling waves. However, the latter criterion is not a sufficient condition. It is expected that waves are indeed pulled beyond this approximate value of $0.38$. However, this would require to use numerical continuation methods as in Section \ref{subsubsec:perfect_rinf}. We computed numerically the speed values for intermediate values of $r$, as shown in Figure \ref{fig:heatmap_zyg_bist}. We believe that the wave speed is independent of the demographic parameter $r$ when the wave is pulled (visual observation for $s \lesssim 0.38$).

\begin{figure}[H]
    \centering
    \begin{subfigure}{\textwidth}
    \centering
    \includegraphics[scale = 0.5]{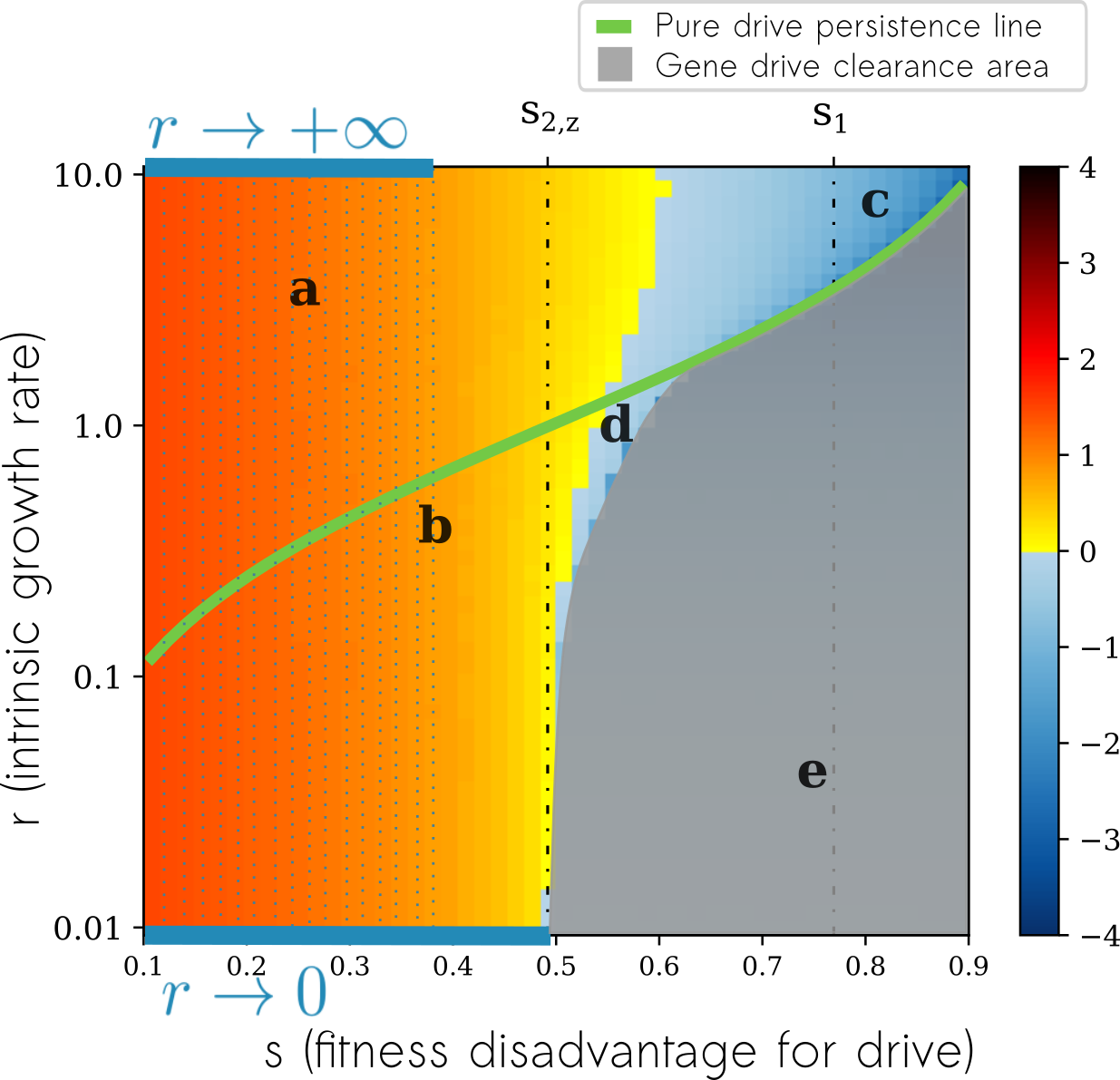}
    \caption{\lk{Heatmap representing the speed of the wave for $c=0.75$, $h=0.1$ when conversion occurs in the zygote. \vspace{0.4cm}}}
\end{subfigure}
\hfill
\begin{subfigure}{\textwidth}
    \centering
    \includegraphics[scale = 0.5]{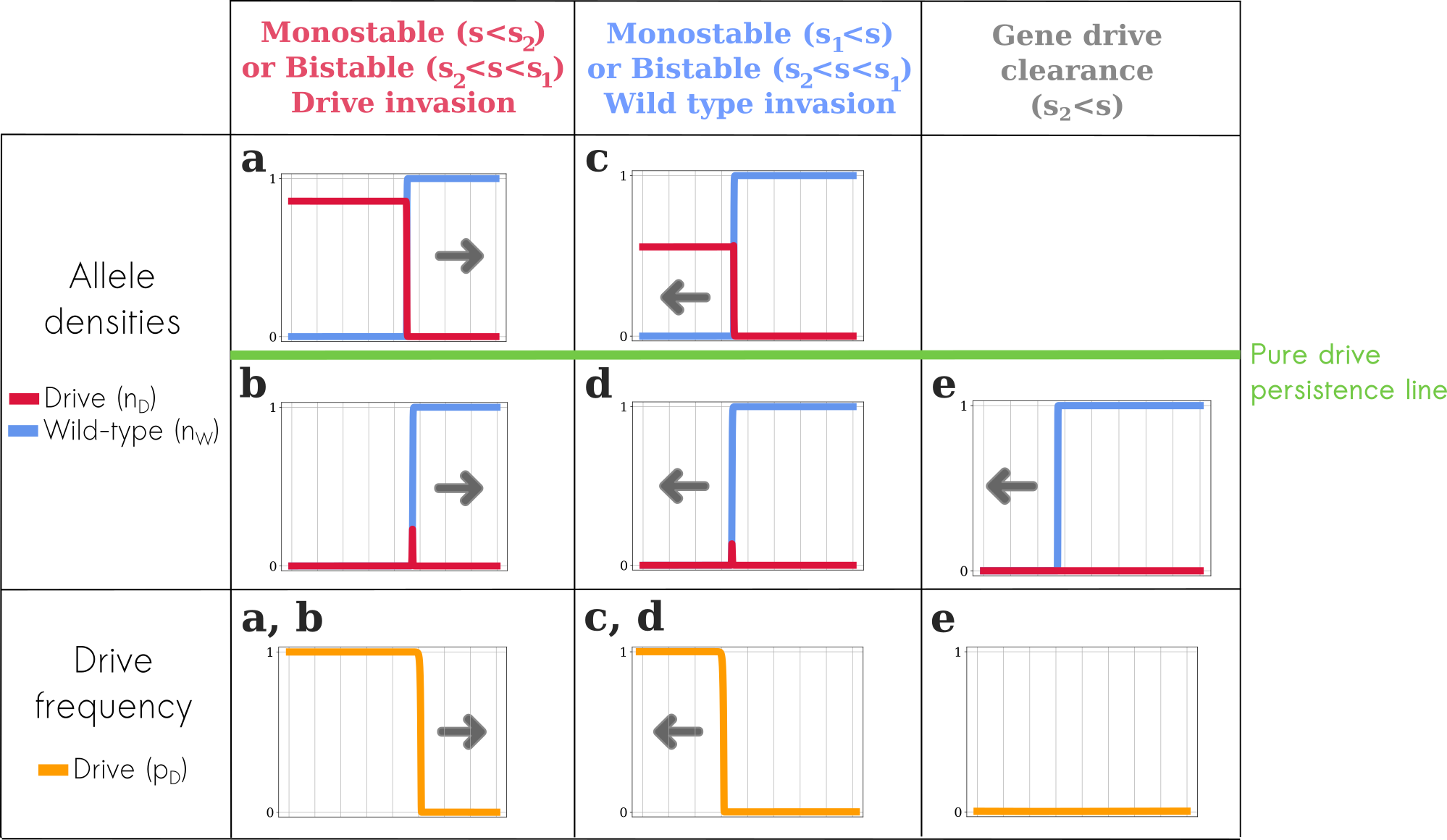}
    \caption{\lk{Illustrations of the shape of the wave (allele densities and drive frequency) for each corresponding case in the heatmap.}}
\end{subfigure}
     \caption{\lk{(A) Heatmap representing the speed of the wave for $c=0.75$, $h=0.1$ when conversion occurs in the zygote.} When the drive invades the population, the speed is positive (in yellow-orange-red). On the contrary, when the wild-type invades the population, the speed is negative (in blue). We have $\mathscr{A}_z<0$, therefore when $r = + \infty$: when $ s < s_{2,z}$ the system is monostable and the drive always invades; when $ s_{2,z} < s < s_1$ the system is bistable and the final state depends on the initial condition; when $ s > s_1$ the system is monostable and the wild-type invades or there is gene drive clearance. The turquoise horizontal lines at the bottom and at the top of the heatmap indicate the theoretical values of $s$ such that there exists a pulled wave with positive speed, respectively for $r = + \infty$ and $r = 0$. Below the pure drive persistence line (light green), a well-mixed population containing only drive homozygous individuals will necessarily go extinct (calculations for this line available in Appendix \ref{ann:persistence}). For $ s \in \mathscr{S}_z$, i.e. $ s \lesssim 0.38$, the level lines are apparently vertical: this is in agreement with the fact that the values of the speed coincide when $r= +\infty$ and $r=0$ in this area. \lk{(B) Shape of the wave for each case indicated by a letter in the heatmap above. The position of the graphs in the table reflects the position in the heatmap with respect to the pure drive persistence line.}}
     \label{fig:heatmap_zyg_bist}
\end{figure}
 
For a better understanding of Figure \ref{fig:heatmap_zyg_bist}, we detail the effect of fitness disadvantage $s$ and dominance coefficient $h$ on drive dynamics for $r = + \infty$ and $c=0.75$, without spatial structure, in Appendix \ref{ann:rode_debarre} (in Figure \ref{ann:rode_zyg_c75}).

\newpage

\subsubsection{Conversion occurring in the germline}\label{subsubsec:partial_ger}

When conversion occurs in the germline, we can deduce the following system from model~\eqref{eq:par_ger}, with $ \n{D} = \n{DD} + (1+c) \ \frac{1}{2} \  \n{DW} $ and $  \n{W} = \n{WW} + (1-c) \  \frac{1}{2}  \ \n{DW}$:

\begin{equation}\label{eq:par_ger_nD_nW}
   \left\{
    \begin{array}{ll}
      \partial_t \n{D}  - \partial_{xx}^2 \n{D} \ = \  \n{D} \ \Big[ \dfrac{ r \ (1-n)+1 }{n} \Big[ (1-s) \n{D} +   (1-sh) \  (1+c) \    \n{W}   \Big]  - 1 \Big] = F^g_D(\n{D}, \n{W}), \\
        \\
      \partial_t  \n{W}  - \partial_{xx}^2  \n{W} = \  \n{W} \ \Big[ \dfrac{ r \ (1-n)+1 }{n}  \Big[ \  \n{W}   +   (1-sh)  \  (1-c) \  \n{D}  \Big]  - 1 \Big] = F^g_W(\n{D}, \n{W}).
    \end{array}
\right. 
\end{equation}

The density $ \n{W}$ (resp. $\n{D}$) corresponds to one half of the wild-type (resp. drive) allele density at the time of zygote formation. When conversion happens in the germline, heterozygous individuals undergo a conversion of their wild-type alleles with probability $c$, and produce a fraction $(1+c)/2$ of drive-carrying gametes.

\subsubsubsection{Preliminary statements on the model}\label{subsubsubsec:preliminary_partial_germline}

As before, we detail the minimal speed of the problem linearized at low densities, for both drive and wild-type alleles.

In case of drive invasion, the minimal speed of the problem linearized at low drive density, i.e. the speed of any pulled monostable wave with positive speed is given by: \begin{equation}\label{eq:lin_speed_ger_drive}
    2 \sqrt{\partial_{\n{D}} F^g_D(0,1)} =  2  \sqrt{ (1-sh)(1+c) - 1 }.
\end{equation}

Note that $ F^g_W(\n{D}, \n{W}) = F^z_W(\n{D}, \n{W})$: in case of a wild-type invasion, the minimal speeds are already given by \eqref{eq:lin_speed_zyg_wt_1} and \eqref{eq:lin_speed_zyg_wt_2} (Section \ref{subsubsec:partial_zyg}).

For our analysis, it will be convenient to rewrite model \eqref{eq:par_ger_nD_nW} so that it follows the frequency of the drive $ \p{D}=\frac{\n{D}}{ \n{W}+\n{D}}$ and the total population density $ n = \n{WW} + \n{DW} + \n{DD} = \n{W} + \n{D}$ (details in Appendix \ref{an:rewrite3}): 

\begin{equation}\label{eq:par_ger_p}
    \left\{
    \begin{array}{ll}
      \partial_t n  - \partial_{xx}^2 n \  &=  \big( r \ (1-n)+1 \big) \ \Big( (1-s) \  \p{D}^2 + 2 \ (1-sh) \   \p{D} \ (1- \p{D}) + (1- \p{D})^2 \Big) n - n, \\
        \\
      \partial_t  \p{D}  - \partial_{xx}^2  \p{D}  \  &=   2 \ \partial_x \log(n) \ \partial_x  \p{D} +  \big( r \ (1-n)+1 \big) \Big( (2h-1) \  s \  \p{D} + (1-sh) (1+c) - 1  \Big) \  \p{D} \ (1- \p{D}) .
    \end{array}
\right.
\end{equation}

\subsubsubsection{$r = + \infty$}\label{subsubsubsec:par_ger_rinf}


Similarly as in Section \ref{subsubsec:perfect_rinf}, we can compute formally the limiting equation on $\p{D}$ when $r = +\infty$:

\begin{equation}\label{eq:par_ger_rinf}
    \partial_t  \p{D}  -  \partial_{xx}^2  \p{D}  \ = \dfrac{ \Big(  - (1-2h) \  s \  \p{D} + [ (1-sh) (1+c) - 1 ]  \Big) \  \p{D} \ (1- \p{D}) }{(1-s) \  \p{D}^2 + 2 \ (1-sh) \   \p{D} \ (1- \p{D}) + (1- \p{D})^2}.
\end{equation}

Note that as in section \ref{subsec:perfect}, this equation does not depend on $n$. We introduce:

\begin{equation}
    \mathscr{A}_g := s \ (1-2h), \quad \quad s_1 := \dfrac{c}{1-h(1-c)}, \quad \quad s_{2,g} :=  \dfrac{c}{2ch + h (1-c)} =  \dfrac{c}{h(1+c)}.
\end{equation}

where $g$ stands for germline. Note that $ \mathscr{A}_g  > 0 \iff s_1 < s_{2,g}$. We define a set $\mathscr{S}_g$ of $s$ values: \begin{equation}
    \mathscr{S}_g := \Big\{ s \in (0,1) | ( 1 - 2 s h) \big(c - s h ( c + 1 ) \big)  + s (1 - 2h)  > 0  \Big\}.
\end{equation}

Results are exactly the same as in Section \ref{subsubsubsec:par_zyg_rinf}, substituting $\mathscr{A}_z $ by $\mathscr{A}_g$, $s_{2,z}$ by $s_{2,g}$, $\mathscr{S}_z$ by  $\mathscr{S}_g$, and the minimal speed of the problem linearized at low drive density \eqref{eq:lin_speed_zyg_drive} by \eqref{eq:lin_speed_ger_drive} (see Appendix \ref{ann:pull_ger}).





\subsubsubsection{$r = 0$}




Using the relation $ n = \n{W} + \n{D}$, system (\ref{eq:par_ger_nD_nW}) can be rewritten as follows when $r = 0$: 
\begin{equation} \label{eq:par_ger_r0} 
\left\{
    \begin{array}{ll}
      \partial_t \n{D}  - \partial_{xx}^2 \n{D} \ =  \ \Big( c \ (1-sh) + s \ (1-h)  \Big) \ \dfrac{\n{D} \   \n{W}}{ \n{D} +  \n{W}}  - s \ \n{D}, \\
        \\
      \partial_t  \n{W} - \partial_{xx}^2  \n{W} =  \ - \Big(  1 - (1-sh) \ (1-c)    \Big) \ \dfrac{\n{D} \   \n{W}}{\n{D} +  \n{W}}.
    \end{array}
\right. 
\end{equation}

We apply the results of Appendix \ref{ann:exist} with $\beta_1 =  1 - (1-sh) \ (1-c) $ and $\beta_2 = c \ (1-sh) + s \ (1-h) $. There exists a monostable and pulled drive invasion wave if:
\begin{equation}
    \beta_2 > \gamma \ \ \iff \ \ s < s_{2,g} = \dfrac{c}{h(1+c)} = \dfrac{c}{2ch + h(1-c)}.
\end{equation}

On the other hand when $ \beta_2 < \gamma$, the reaction term of $n_D$ in \eqref{eq:par_ger_r0} is strictly negative. As before, the density $\n{D}$ converges to zero uniformly in space at rate at least $\beta_2 - \gamma$ (gene drive clearance) and the final density of wild-type is not clearly determined: the problem boils down to diffusion only in the large time asymptotics (details in Appendix \ref{ann:gd_clearance}).

Note that the same intuitions as in section \ref{subsubsubsec:par_zyg_r0} hold: when $\mathscr{A}_g > 0$, a condition for having a pulled wave with positive speed for both $r = 0$ and $r = + \infty$ is $s < s_{2,g}$ ; when $\mathscr{A}_g < 0$, a condition for having a pulled wave with positive speed for both $r = 0$ and $r = + \infty$ is $ s \in \mathscr{S}_g \subseteq [0, s_{2,g}]$. This suggests that, under those conditions, whatever the value of the demographic parameter $r$ is, the drive invasion wave is always pulled and consequently, travels at a speed which does not depend on $r$ either (speed given by \eqref{eq:lin_speed_ger_drive}). As before, we verify this intuition numerically (vertical level lines) in the following section.

\subsubsubsection{Numerical illustrations}

\vspace{-1cm}

\begin{figure}[H]
\centering
\begin{subfigure}{0.48\textwidth}
    \centering
    \includegraphics[scale = 0.5]{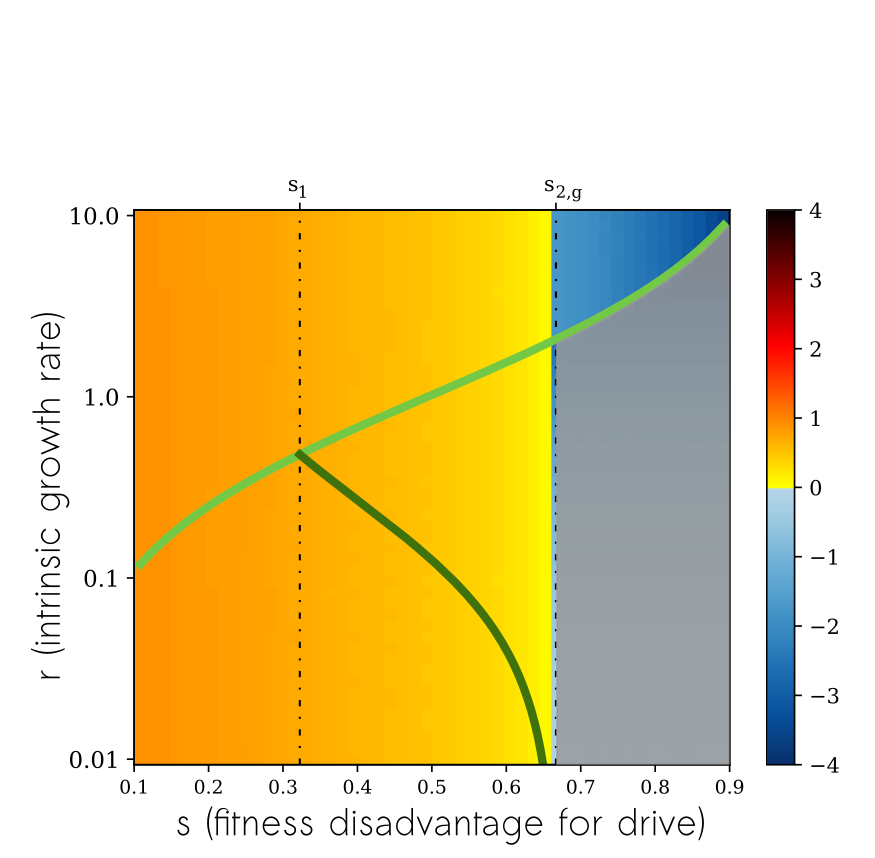}
    \caption{$c = 0.25$, $h = 0.3$}
    \label{fig:heatmap_ger_coex}
\end{subfigure}
\hfill
\begin{subfigure}{0.48\textwidth}
    \centering
    \includegraphics[scale = 0.5]{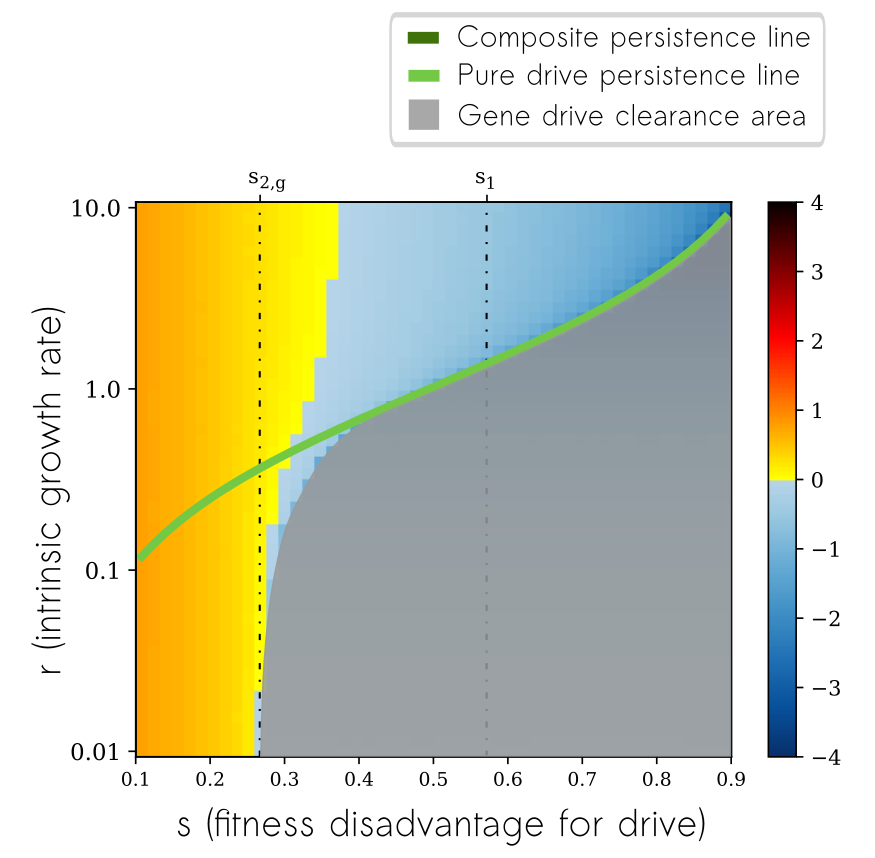}
    \caption{$c = 0.25, h = 0.75$}
    \label{fig:heatmap_ger_bist}
\end{subfigure}
\caption{Heatmap representing the speed of the wave when conversion occurs in the germline. When the drive invades the population, the speed is positive (in yellow-orange-red). On the contrary, when the wild-type invades the population, the speed is negative (in blue). Below the pure drive persistence line (light green), a well-mixed population containing only drive homozygous individuals will necessarily go extinct. Below the composite persistence line (dark green), it is the whole population that goes extinct (calculations for both lines available in Appendix \ref{ann:persistence}). The gray zone corresponds to the gene drive clearance area. In (a) we have $\mathscr{A}_g > 0$, therefore the system is always monostable for $r = + \infty$: when $ s < s_1$ the drive always invades; when $ s_1 < s < s_{2,g}$ the final state is a coexistence state; when $ s > s_{2,g}$ the wild-type invades or there is gene drive clearance. When both drive and wild types invade (coexistence), only the speed of the drive is shown in the heatmap, resulting in an apparent discontinuity at $s=s_{2,z}$. In (b) we have $\mathscr{A}_z<0$, therefore when $r = + \infty$: when $ s < s_{2,g}$ the system is monostable and the drive always invades; when $ s_{2,g} < s < s_1$ the system is bistable and the final state depends on the initial condition; when $ s > s_1$ the system is monostable and the wild-type invades or there is gene drive clearance.}
\end{figure}

For a better understanding of Figures \ref{fig:heatmap_ger_coex} and \ref{fig:heatmap_ger_bist}, we detail the effect of fitness disadvantage $s$ and dominance coefficient $h$ on drive dynamics for $r = + \infty$ and $c=0.25$, without spatial structure, in Appendix \ref{ann:rode_debarre} (in Figure \ref{ann:rode_ger_c25}).

\subsubsection{Conclusion}

When conversion occurs in the zygote (resp. in the germline) for $\mathscr{A}_z < 0$ (resp. $\mathscr{A}_g < 0$), demographics influence the speed of the drive propagation at least for $s \in (s_{2,z}, s_1)$ (resp. $s \in (s_{2,g}, s_1)$). More precisely, the sign of the speed can switch, changing the type of the invasion (drive or wild-type). When $\mathscr{A}_z > 0$ (resp. $\mathscr{A}_g > 0$) however, a model following only frequencies will always predict the correct speed of expansion. However, a model following only frequencies will not provide information on population size, and in particular whether the population is suppressed or eradicated, while this point is of great biological relevance. 

For both zygote and germline conversion timings, the critical values of $\mathscr{A}_z$ and $\mathscr{A}_g$ can be interpreted as the values at which the fitness of adults who were born heterozygous ($f_H'$) is the arithmetic mean of the fitness of adults born homozygote ($(f_D + f_W)/2$). The fitness of adults who were born heterozygous depends on the timing of gene conversion. For germline conversion, $f_H' = f_H$, and $\mathscr{A}_g  = 0$ when $h = 1/2$, i.e. when there is co-dominance between the drive and wild-type alleles, i.e. when $f_H' = f_H = (f_D + f_W)/2$. For zygote conversion, the fitness of adults born heterozygous depends on whether gene conversion has taken place or not ($f_H' = (1-c) (1-hs) + c (1-s)$). The condition $\mathscr{A}_z = 0$ is equivalent to $(1-c)(1-h) = 1/2$, which happens for $f_H' = (f_D + f_W)/2$.

\section{Discussion}\label{sec:discuss}

Following \cite{girardin2021}, we quantified the impact of demography in the case of the propagation of a super-Mendelian drive. We extended the analysis of reference \cite{girardin2021} to the case of partial conversion ($0 < c < 1$), implying the presence of heterozygotes. 

\subsection*{On the final state of the population}
The final size of the population naturally varies. In case where no wild type can survive, the final size is the same regardless of the details of gene conversion(timing nor probability): $\n{D}^* = \min(0, 1- \frac{s}{r (1-s)})$. In case of coexistence between wild-type and drive alleles, the final size depends on all parameters (see Appendix \ref{an:composite_line}). Interestingly, in the case of coexistence, the drive allele can persist in the population even if a pure drive population would not ($\n{D}^* = 0$), see Figure \ref{fig:heatmap_zyg_coex} (note the area between the {\em composite persistence line} and the {\em pure drive persistence line}). In contrast with standard Mendelian genetics (corresponding to $c=0$), coexistence can occur even if the dominance parameter is such that $h\in (0,1)$ \cite{deredec2008,rode2019}. More precisely, when conversion is partial and, either $h<1 - \frac{1}{2(1-c)}$ (zygote conversion), or $h<\frac12$ (germline conversion), there exists a stable coexistence state if $s$ takes intermediate values $s\in(s_1,s_{2,z})$ (zygote conversion), or $s\in(s_1,s_{2,g})$ (germline conversion), where $s_1,s_{2,z},s_{2,g}$ depend on $(c,h)$ but do not depend on the demographic parameter $r$ (see details in Section \ref{subsubsec:partial_zyg} and \ref{subsubsec:partial_ger}). While the final size of the population naturally depends on $r$.

\subsection*{On the transient regime (propagation of waves)}
In order to evaluate the impact of demography on the dynamics of drive expansion, we compared the extreme cases $r \to \infty$ and $r \to 0$ (resp. low demographic variations versus large demographic variations). 

For $r=0$, we found that, when the drive propagates, it does so through a monostable and pulled wave. This happens when the drive is not too costly. In contrary, the drive gets uniformly extinct if it is too costly. The threshold on the fitness cost $s_{2,z}$ (zygote conversion), or $s_{2,g}$ (germline conversion), depends on $(c,h)$. The situation is analogous to the spatial spreading of an epidemic following a SI type model.  

The case $r=0$ gives the possibility to measure the importance of the demographic advection term $2 \partial_x \log (n) \partial_x p$ when the problem is formulated in frequency, see equations \eqref{eq:per_zyg_p}, \eqref{eq:par_zyg_p}, \eqref{eq:par_ger_p}. In fact, we show that ignoring this term can lead to an overestimation of the wave speed. This happens, for instance, in case of perfect conversion in the zygote, when $s \in(\frac25,\frac12)$, then the equation \eqref{eq:per_zyg_p} without  $2 \partial_x \log (n) \partial_x p$ would lead to a pushed front with velocity $\frac{2-3s}{\sqrt{2s}}$ \cite{hadeler1975}. However, we proved that the front is actually pulled with velocity $2\sqrt{1-2s}< \frac{2-3s}{\sqrt{2s}}$. Intuitively, advection due to demographic variations slows down the expansion of the bulk. Noticeably, the effect is so strong that it prevents the front from being pushed. 

In contrast, for $r = \infty$, the analysis boils down to a single equation on the drive allele frequency \cite{strugarek2016,girardin2021}. According to \cite{rode2019}, where the case of germline conversion was investigated, there is a large panel of frequency-dependence relationships, including monostable fixation of one allele, bi-stability, and stable coexistence between the two alleles, even if $h\in (0,1)$. This leads to a variety of propagation phenomena, either pulled or pushed, as described in Section \ref{subsubsec:partial_ger}.
The same panel of relationships arises in the case of zygote conversion, with qualitative similarities but quantitative differences in the thresholds and in the wave speeds, compare Section \ref{subsubsec:partial_ger} with Section \ref{subsubsec:partial_zyg}.

To connect $r=0$ and $r=\infty$, we conjecture that, if the wave of the drive is pulled at $r = \infty$, then it is pulled for any value of $r>0$, and the wave speed is independent of $r$. In particular, this occurs when the frequency-dependence term induces monostable dynamics and $s$ is small enough, or when there is stable co-existence. This conjecture is supported by numerical investigations (Figures \ref{fig:heatmap_zyg_coex}, \ref{fig:heatmap_zyg_bist}, \ref{fig:heatmap_ger_coex} and \ref{fig:heatmap_ger_bist}). Still, the final size of population naturally depends on $r$.

\subsection*{Perspectives}

We have focused on the classical dichotomy between pulled and pushed waves, even if the transition between the two is subject to current research both in theoretical studies \cite{an2021,avery2022,birzu2018}, and in experimental works \cite{dahirel2021}. 

Pulled and pushed waves are associated with different outcomes on the maintenance of neutral diversity (which was not considered in our study). The genetic diversity of a population expanding by a pulled wave is very limited (with possible accumulation of deleterious mutations \cite{peischl2013}), while more diversity is maintained under a pushed wave \cite{roques2012}. It could be interesting to investigate how gene conversion influences the maintenance of diversity along an expanding wave. 
More generally, the bottleneck following spread of a suppression drive will affect diversity, which may have long-lasting consequences even if wild-type individuals later recolonize the area.

It would be highly relevant to explore stochastic dynamics beyond our deterministic approach. When population sizes get to be small, as in the drive eradication case, large fluctuations and even chasing events are expected, as described in \cite{champer2021}. It would also be extremely interesting to extend the scope of the model, including by distinguishing between males and females which may have different fitnesses (especially in transgenic mosquitoes \cite{beaghton2016, north2020, kyrou2018, hammond2015}). Plural life stages or haploid phases might also influence modelling conclusions \cite{liu2022, li2020}.

\section{Acknowledgements}

This work is funded by ANR-19-CE45-0009-01 "TheoGeneDrive".

This project has received funding from the European Research Council
(ERC) under the European Union’s Horizon 2020 research and innovation
programm (grant agreement No 865711).

We are grateful to the INRAE MIGALE bioinformatics facility (MIGALE, INRAE, 2020. Migale bioinformatics Facility, doi: 10.15454/1.5572390655343293E12) for providing computing resources.

\newpage


\addtocontents{toc}{\bigskip \bigskip \hspace{-0,65cm} \large \textbf{Appendix} \normalsize \par}

\addtocontents{lof}{ \bigskip \bigskip \textbf{Appendix} \par }

\appendix

\Large
\textbf{Appendix}

\normalsize

\section{Model with partial conversion: growth term details}\label{an:growthterm}

To obtain the global growth term for each genotypes in models \eqref{eq:par_zyg} and \eqref{eq:par_ger}, we calculate type proportions among the offspring for each possible couple, and then we sum the corresponding terms. The calculations follow standard lines of population genetics, differing only by the timing of gene conversion. When conversion occurs in the zygote, the parameter $c$ appears in between the gametes and the offspring production, whereas when conversion occurs in the germline, it appears before gametes production. These equations in densities are consistent with the one obtained in frequency in the literature (when conversion occurs in the zygote \cite{deredec2008, unckless2015}, or in the germline \cite{deredec2008, rode2019}).

\subsection{Conversion occurring in the zygote}

\begin{table}[H]
\begin{tabular}{|M{.13\textwidth}|M{.37\textwidth}|M{.08\textwidth}|M{.3\textwidth}|}
\hline
  \textbf{Parents} & $ \ $ \hspace{1.25cm} \textbf{Egg} \hspace{1.85cm} \textbf{Adult}  &  \textbf{Fitness} &  \textbf{Growth term} \\
 \hline
    WW + WW & \begin{tikzpicture}
\begin{scope}[every node/.style={circle}]
    \node (A) at (0,-0){};
    \node (B) at (2,0) {WW};
    \node (C) at (5,0) {WW};
\end{scope}
\begin{scope}[>={Stealth[black]},
              every node/.style={fill=white},
              every edge/.style={draw=black, thick}]
    \path [->] (A) edge node {$1$} (B);
    \path [->] (B) edge node {$1$} (C);
\end{scope}
\end{tikzpicture} & 1 & $ \dfrac{\n{WW} \n{WW}}{n} $ \\
\hline
   \vspace{0.9cm} WW + WD & \begin{tikzpicture}
\begin{scope}[every node/.style={circle}]
    \node (A) at (0,-1.5){};
    \node (B) at (2,-0.5) {WD};
    \node (C) at (5,0) {DD};
    \node (D) at (5,-1) {WD};
    \node (E) at (2,-2) {WW};
    \node (F) at (5,-2) {WW};
\end{scope}
\begin{scope}[>={Stealth[black]},
              every node/.style={fill=white},
              every edge/.style={draw=black, thick}]
    \path [->] (A) edge node {$\frac{1}{2}$} (B);
    \path [->] (B) edge node {$c$} (C);
    \path [->] (B) edge node {$1-c$} (D);
    \path [->] (A) edge node {$\frac{1}{2}$} (E);
    \path [->] (E) edge node {$1$} (F);
\end{scope}
\end{tikzpicture} 
& \begin{tikzpicture}
\begin{scope}
    \node (1) at (0,0){$ 1-s $};
    \node (2) at (0,-1) {$ 1-sh $};
    \node (3) at (0,-2) {$ 1 $};
\end{scope}
\end{tikzpicture}
&   \begin{tikzpicture}
\begin{scope}
    \node (1) at (0,0){$ \frac{1}{2} \ c \ \   (1-s)     \ \dfrac{2 \n{WW} \n{DW}}{n} $};
    \node (2) at (0,-1) {$ \frac{1}{2} \ (1-c) \ \   (1-sh)     \ \dfrac{2 \n{WW} \n{DW}}{n} $};
    \node (3) at (0,-2) {$ \frac{1}{2} \        \ \dfrac{2 \n{WW} \n{DW}}{n} $};
\end{scope}
\end{tikzpicture} \\
 \hline
     WW + DD & \begin{tikzpicture}
\begin{scope}[every node/.style={circle}]
    \node (A) at (0,-0.5){};
    \node (B) at (2,-0.5) {WD};
    \node (C) at (5,0) {DD};
    \node (D) at (5,-1) {WD};
\end{scope}
\begin{scope}[>={Stealth[black]},
              every node/.style={fill=white},
              every edge/.style={draw=black, thick}]
    \path [->] (A) edge node {$1$} (B);
    \path [->] (B) edge node {$c$} (C);
    \path [->] (B) edge node {$1-c$} (D);
\end{scope}
\end{tikzpicture} &   
\begin{tikzpicture}
\begin{scope}
    \node (1) at (0,0){$ 1-s $};
    \node (2) at (0,-1) {$ 1-sh $};
\end{scope}
\end{tikzpicture}&   
\begin{tikzpicture}
\begin{scope}
    \node (1) at (0,0){$ c \ \   (1-s)     \ \dfrac{2 \n{WW} \n{DD}}{n} $};
    \node (2) at (0,-1) {$ (1-c) \ \  (1-sh)     \ \dfrac{2 \n{WW} \n{DD}}{n} $};
\end{scope}
\end{tikzpicture}\\
\hline
    WD + WD & \begin{tikzpicture}
\begin{scope}[every node/.style={circle}]
    \node (A) at (0,-1.5){};
    \node (B) at (2,0) {WW};
    \node (C) at (5,0) {WW};
    \node (D) at (2,-1.5) {WD};
    \node (E) at (5,-1) {DD};
    \node (F) at (5,-2) {WD};
    \node (G) at (2,-3) {DD};
    \node (H) at (5,-3) {DD};
\end{scope}

\begin{scope}[>={Stealth[black]},
              every node/.style={fill=white},
              every edge/.style={draw=black, thick}]
    \path [->] (A) edge node {$\frac{1}{4}$} (B);
    \path [->] (B) edge node {$1$} (C);
    \path [->] (A) edge node {$\frac{1}{2}$} (D);
    \path [->] (D) edge node {$c$} (E);
    \path [->] (D) edge node {$1-c$} (F);
    \path [->] (A) edge node {$\frac{1}{4}$} (G);
    \path [->] (G) edge node {$1$} (H);
\end{scope}
\end{tikzpicture}  &  
\begin{tikzpicture}
\begin{scope}
    \node (1) at (0,0){$ 1 $};
    \node (2) at (0,-1) {$ 1-s  $};
    \node (3) at (0,-2) {$ 1-sh $};
    \node (4) at (0,-3) {$ 1-s $};
\end{scope}
\end{tikzpicture} &  
\begin{tikzpicture}
\begin{scope}
    \node (1) at (0,0){$ \frac{1}{4} \         \ \dfrac{\n{DW} \n{DW}}{n} $};
    \node (2) at (0,-1) {$ \frac{1}{2} \ c \ \   (1-s)      \ \dfrac{\n{DW} \n{DW}}{n}  $};
    \node (3) at (0,-2) {$ \frac{1}{2} \ (1-c) \ \  (1-sh)      \ \dfrac{\n{DW} \n{DW}}{n}  $};
    \node (4) at (0,-3) {$ \frac{1}{4} \ \   (1-s)      \ \dfrac{\n{DW} \n{DW}}{n}  $};
\end{scope}
\end{tikzpicture}\\
\hline
  \vspace{0.9cm}  WD + DD & \begin{tikzpicture}
\begin{scope}[every node/.style={circle}]
    \node (A) at (0,-1.5){};
    \node (B) at (2,-0.5) {WD};
    \node (C) at (5,0) {DD};
    \node (D) at (5,-1) {WD};
    \node (E) at (2,-2) {DD};
    \node (F) at (5,-2) {DD};
\end{scope}
\begin{scope}[>={Stealth[black]},
              every node/.style={fill=white},
              every edge/.style={draw=black, thick}]
    \path [->] (A) edge node {$\frac{1}{2}$} (B);
    \path [->] (B) edge node {$c$} (C);
    \path [->] (B) edge node {$1-c$} (D);
    \path [->] (A) edge node {$\frac{1}{2}$} (E);
    \path [->] (E) edge node {$1$} (F);
\end{scope}
\end{tikzpicture}  &  
\begin{tikzpicture}
\begin{scope}
    \node (1) at (0,0){$ 1-s $};
    \node (2) at (0,-1) {$ 1-sh $};
    \node (3) at (0,-2) {$ 1-s $};
\end{scope}
\end{tikzpicture}&  
\begin{tikzpicture}
\begin{scope}
    \node (1) at (0,0){$ \frac{1}{2} \ c \ \   (1-s)     \ \dfrac{2 \n{DW} \n{DD}}{n} $};
    \node (2) at (0,-1) {$ \frac{1}{2} \ (1-c) \ \  (1-sh)     \ \dfrac{2 \n{DW} \n{DD}}{n} $};
    \node (3) at (0,-2) {$ \frac{1}{2} \ \   (1-s)     \ \dfrac{2 \n{DW} \n{DD}}{n} $};
\end{scope}
\end{tikzpicture}\\
 \hline
  DD + DD & \begin{tikzpicture}
\begin{scope}[every node/.style={circle}]
    \node (A) at (0,-0){};
    \node (B) at (2,0) {DD};
    \node (C) at (5,0) {DD};
\end{scope}
\begin{scope}[>={Stealth[black]},
              every node/.style={fill=white},
              every edge/.style={draw=black, thick}]
    \path [->] (A) edge node {$1$} (B);
    \path [->] (B) edge node {$1$} (C);
\end{scope}
\end{tikzpicture} &  $1-s$ &
\begin{tikzpicture}
\begin{scope}
    \node (1) at (0,0){$  (1-s)  \  \dfrac{\n{DD} \n{DD}}{n} $};
\end{scope}
\end{tikzpicture}\\
\hline
\end{tabular}
\caption{Growth term details when conversion occurs in the zygote.}
\end{table}

\newpage

\subsection{Conversion occurring in the germline}


\begin{table}[H]
\begin{tabular}{|M{.13\textwidth}|M{.43\textwidth}|M{.08\textwidth}|M{.3\textwidth}|}
\hline
  \textbf{Parents} & $ \ $ \hspace{2cm} \textbf{Gametes} \hspace{1.5cm} \textbf{Adult}  &  \textbf{Fitness} &  \textbf{Growth term} \\
 \hline
    WW + WW & \begin{tikzpicture}
\begin{scope}[every node/.style={circle}]
    \node (A) at (0,-0){};
    \node (B) at (3,0) {W,W + W,W};
    \node (C) at (6,0) {WW};
\end{scope}
\begin{scope}[>={Stealth[black]},
              every node/.style={fill=white},
              every edge/.style={draw=black, thick}]
    \path [->] (A) edge node {$1$} (B);
    \path [->] (B) edge node {$1$} (C);
\end{scope}
\end{tikzpicture} &  $1$ & $ \dfrac{\n{WW} \n{WW}}{n} $ \\
\hline 
  WW + WD  \vspace{0.2cm}  & \vspace{-0.2cm}  \begin{tikzpicture}
\begin{scope}[every node/.style={circle}]
    \node (A) at (0,-0.75){};
    \node (B) at (3,0) {W,W + D,D};
    \node (C) at (6,0) {WD};%
    \node (D) at (6,-1) {WW};%
    \node (E) at (3,-1.5) {W,W + W,D};
    \node (F) at (6,-2) {WD};%
\end{scope}
\begin{scope}[>={Stealth[black]},
              every node/.style={fill=white},
              every edge/.style={draw=black, thick}]
    \path [->] (A) edge node {$c$} (B);
    \path [->] (B) edge node {$1$} (C);
    \path [->] (E) edge node {$\frac{1}{2}$} (D);
    \path [->] (A) edge node {$1-c$} (E);
    \path [->] (E) edge node {$\frac{1}{2}$} (F);
\end{scope}
\end{tikzpicture} 
 &   \begin{tikzpicture}
\begin{scope}
    \node (1) at (0,0){$  1-sh $};
    \node (2) at (0,-1) {$ 1$};
    \node (3) at (0,-2) {$ 1-sh $};
\end{scope}
\end{tikzpicture} 
&   \begin{tikzpicture}
\begin{scope}
    \node (1) at (0,0){$  c \ \ (1-sh)    \ \dfrac{2 \n{WW} \n{DW}}{n} $};
    \node (2) at (0,-1) {$  (1-c) \ \frac{1}{2} \    \ \dfrac{2 \n{WW} \n{DW}}{n} $};
    \node (3) at (0,-2) {$ (1-c) \ \frac{1}{2} \  \ (1-sh)   \ \dfrac{2 \n{WW} \n{DW}}{n} $};
\end{scope}
\end{tikzpicture} \\
 \hline 
    WW + DD &  \begin{tikzpicture}
\begin{scope}[every node/.style={circle}]
    \node (A) at (0,-0){};
    \node (B) at (3,0) {W,W + D,D};
    \node (C) at (6,0) {WD};
\end{scope}
\begin{scope}[>={Stealth[black]},
              every node/.style={fill=white},
              every edge/.style={draw=black, thick}]
    \path [->] (A) edge node {$1$} (B);
    \path [->] (B) edge node {$1$} (C);
\end{scope}
\end{tikzpicture} &  $1-sh$ & $  (1-sh)   \ \dfrac{2 \n{WW} \n{DD}}{n} $ \\
\hline 
    WD + WD  \vspace{1.4cm} & \vspace{-0.6cm}  \begin{tikzpicture}
\begin{scope}[every node/.style={circle}]
    \node (A) at (-0.5,-1.5){};
    \node (B) at (3,0.2) {D,D + D,D};
    \node (C) at (6,0.2) {DD};
    \node (D) at (3,-1.5) {D,D + W,D};
    \node (E) at (6,-1) {DD};
    \node (F) at (6,-2) {WD};
    \node (G) at (3,-4) {W,D + W,D};
    \node (H) at (6,-3) {DD};
    \node (I) at (6,-4) {WD};
    \node (J) at (6,-5) {WW};
\end{scope}

\begin{scope}[>={Stealth[black]},
              every node/.style={fill=white},
              every edge/.style={draw=black, thick}]
    \path [->] (A) edge node {$c^2$} (B);
    \path [->] (B) edge node {$1$} (C);
    \path [->] (A) edge node {$2c(1-c)$} (D);
    \path [->] (D) edge node {$\frac{1}{2}$} (E);
    \path [->] (D) edge node {$\frac{1}{2}$} (F);
    \path [->] (A) edge node {$(1-c)^2$} (G);
    \path [->] (G) edge node {$\frac{1}{4}$} (H);
    \path [->] (G) edge node {$\frac{1}{2}$} (I);
    \path [->] (G) edge node {$\frac{1}{4}$} (J);
\end{scope}
\end{tikzpicture} &  
\begin{tikzpicture}
\begin{scope}
    \node (1) at (0,-0){$1-s $};
    \node (2) at (0,-1) {$ 1-s  $};
    \node (3) at (0,-2) {$ 1-sh $};
    \node (4) at (0,-3) {$ 1-s $};
    \node (5) at (0,-4) {$ 1-sh$};
    \node (6) at (0,-5) {$ 1 $};
\end{scope}
\end{tikzpicture} &  
\begin{tikzpicture}
\begin{scope}
    \node (1) at (0,0){$ c^2 \ \ (1-s)  \ \dfrac{\n{DW} \n{DW}}{n} $};
    \node (2) at (0,-1) {$ c \ (1-c) \ \ (1-s)      \ \dfrac{\n{DW} \n{DW}}{n}  $};
    \node (3) at (0,-2) {$ c \ (1-c)  \ \ (1-sh)      \ \dfrac{\n{DW} \n{DW}}{n} $};
    \node (4) at (0,-3) {$ (1-c)^2 \  \frac{1}{4} \ \ (1-s)     \  \dfrac{\n{DW} \n{DW}}{n}  $};
    \node (5) at (0,-4) {$ (1-c)^2 \  \frac{1}{2} \ \ (1-sh)    \ \dfrac{\n{DW} \n{DW}}{n}   $};
    \node (6) at (0,-5) {$ (1-c)^2 \  \frac{1}{4} \    \  \dfrac{\n{DW} \n{DW}}{n}  $};
\end{scope}
\end{tikzpicture}\\
\hline 
  WD + DD  \vspace{0.2cm}  & \vspace{-0.2cm}  \begin{tikzpicture}
\begin{scope}[every node/.style={circle}]
    \node (A) at (0,-0.75){};
    \node (B) at (3,0) {D,D + D,D};
    \node (C) at (6,0) {DD};%
    \node (D) at (6,-1) {WD};%
    \node (E) at (3,-1.5) {W,D + D,D};
    \node (F) at (6,-2) {DD};%
\end{scope}
\begin{scope}[>={Stealth[black]},
              every node/.style={fill=white},
              every edge/.style={draw=black, thick}]
    \path [->] (A) edge node {$c$} (B);
    \path [->] (B) edge node {$1$} (C);
    \path [->] (E) edge node {$\frac{1}{2}$} (D);
    \path [->] (A) edge node {$1-c$} (E);
    \path [->] (E) edge node {$\frac{1}{2}$} (F);
\end{scope}
\end{tikzpicture}  &   \begin{tikzpicture}
\begin{scope}
    \node (1) at (0,0){$ 1-s $};
    \node (2) at (0,-1) {$ 1-sh $};
    \node (3) at (0,-2) {$ 1-s $};
\end{scope}
\end{tikzpicture}  &   \begin{tikzpicture}
\begin{scope}
    \node (1) at (0,0){$  c \ \ (1-s)    \ \dfrac{2 \n{DW} \n{DD}}{n} $};
    \node (2) at (0,-1) {$ (1-c) \ \frac{1}{2} \  \  (1-sh)    \ \dfrac{2 \n{DW} \n{DD}}{n} $};
    \node (3) at (0,-2) {$ (1-c) \ \frac{1}{2} \ \ (1-s)   \ \dfrac{2 \n{DW} \n{DD}}{n} $};
\end{scope}
\end{tikzpicture} \\
 \hline 
  DD + DD & \begin{tikzpicture}
\begin{scope}[every node/.style={circle}]
    \node (A) at (0,-0){};
    \node (B) at (3,0) {D,D + D,D};
    \node (C) at (6,0) {DD};
\end{scope}
\begin{scope}[>={Stealth[black]},
              every node/.style={fill=white},
              every edge/.style={draw=black, thick}]
    \path [->] (A) edge node {$1$} (B);
    \path [->] (B) edge node {$1$} (C);
\end{scope}
\end{tikzpicture} &  $1-s$ &
\begin{tikzpicture}
\begin{scope}
    \node (1) at (0,0){$ (1-s)    \ \dfrac{\n{DD} \n{DD}}{n} $};
\end{scope}
\end{tikzpicture}\\
\hline
\end{tabular}
\caption{Growth term details when conversion occurs in the germline.}
\end{table}

\newpage

\section{System rewritten with variables $(n, \p{D})$}

Below, we present the details of the reformulation from models \eqref{eq:per_zyg}, \eqref{eq:par_zyg_nD_nW} and \eqref{eq:par_ger_nD_nW} in terms of total population density $n$ and drive allele frequency $\p{D}$.


\subsection{Model with perfect conversion} \label{an:rewrite1}

We rewrite model \eqref{eq:per_zyg} with variables:

\begin{equation}
    n = \n{WW} + \n{DD}, \quad \quad \quad \quad  \p{D} = \dfrac{\n{D}}{\n{W} + \n{D}} = \dfrac{\n{DD}}{\n{WW} + \n{DD}} .
\end{equation} 

where $n$ is the total population density and $\p{D}$ is the drive allele frequency, or equivalently in this model, the frequency of drive homogeneous individuals.\\

\textbf{Equation on n:} \begin{equation}
\begin{aligned}
   \partial_t n - \partial_{xx}^2 n & =  \Big(r  \ (1-n) + 1 \Big) \ \Big( (1-s) \ \dfrac{\n{DD}^2 + 2 \ \n{WW} \n{DD}}{\n{WW}+\n{DD}} +  \ \dfrac{\n{WW}^2}{\n{WW}+\n{DD}} \Big) - n, \\ 
 & =  \Big(r  \ (1-n) + 1 \Big)  \ \Big( (1-s) \ \p{D}^2 + (1-s) \ 2 \ \p{D} (1-\p{D}) + (1-s ) (1-\p{D})^2 + s (1-\p{D})^2 \Big) n - n, \\
 & =  \Big(r  \ (1-n) + 1 \Big)  \ \Big( (1-s) (\p{D} + 1 - \p{D})^2   + s (1-\p{D})^2 \Big) n - n, \\
 & =  \Big(r  \ (1-n) + 1 \Big)  \ \Big( (1-s) + s (1-\p{D})^2 \Big) n - n, \\ 
 & =  \Big(r  \ (1-n) + 1 \Big)  \ \Big( (1-s) \  \p{D}  \ (2-\p{D}) + (1-\p{D})^2 \Big) n - n.
\end{aligned} 
\end{equation}

\textbf{Equation on $\mathbf{\n{DD}}$:} \begin{equation}
\begin{aligned}
\partial_t \n{DD} & =  \partial_{xx}^2 \n{DD} \  + \ (1-s) \ \Big(r  \ (1-\n{DD}-\n{WW}) + 1 \Big) \ \dfrac{\n{DD}^2 + 2 \ \n{WW} \n{DD}}{\n{WW}+\n{DD}} - \n{DD},\\
& \stackrel{\footnotemark}{=}  \p{D} \ \partial_{xx}^2 n + 2 \ \partial_x n \ \partial_x \p{D} +  n \ \partial_{xx}^2 \p{D}  \  + \ (1-s) \ \Big(r  \ (1-n) + 1 \Big) \ ( 2 -\p{D} ) \ n \ \p{D} - n \ \p{D}.
\end{aligned} 
\end{equation}

\footnotetext{$\partial_{xx}^2 \n{DD} = \partial_{xx}^2 n \p{D} =  \partial_{x} ( \p{D} \ \partial_{x} n + n \ \partial_{x} \p{D} ) = \p{D} \ \partial_{xx}^2  n + 2 \ \partial_{x} \p{D}  \ \partial_{x} n + n  \ \partial_{xx}^2  \p{D} $} 




\textbf{Equation on $\mathbf{\p{D} = \dfrac{\n{DD}}{n}}$:} \begin{equation}
\begin{aligned}
    \partial_t \p{D}  &   = \dfrac{ n \ (\partial_t \n{DD}) - \n{DD} \ (\partial_t n)}{n^2} = \dfrac{ n \ (\partial_t \n{DD}) - n \ \p{D} \ (\partial_t n)}{n^2} = \dfrac{1}{n} \big( \ \partial_t \n{DD} - \p{D} \ (\partial_t n) \big), \\
    & =  \dfrac{1}{n} \Big[ \   \p{D} \ \partial_{xx}^2 n + 2 \ \partial_x n \ \partial_x \p{D} +  n \ \partial_{xx}^2 \p{D}  \  + \  \Big(r  \ (1-n) + 1 \Big) \ (1-s) \ ( 2 -\p{D} ) \ n \ \p{D} - n \ \p{D} \ \Big] \\
  & \quad \quad   -  \dfrac{1}{n} \big[  \  \p{D}  \partial_{xx}^2 n \  +  \ \Big(r  \ (1-n) + 1 \Big)  \ \Big( (1-s) \  \p{D}  \ (2-\p{D}) + (1-\p{D})^2 \Big) n \ \p{D} - n \ \p{D} \ \Big], \\
     &  =  \partial_{xx}^2 \p{D} \  +  \ 2 \ \partial_x \log(n) \ \partial_x \p{D} \  +  \Big(r  \ (1-n) + 1 \Big) \ \p{D} \ \Big( (1-s) ( 2-\p{D} )    -   (1-s) (2-\p{D}) \p{D} - (1-\p{D})^2 \Big), \\ 
 & = \partial_{xx}^2 \p{D} \  +  \ 2 \ \partial_x \log(n) \ \partial_x \p{D} \  +  \Big(r  \ (1-n) + 1 \Big) \ s \ \p{D} \ (1-\p{D}) \ ( \p{D} - \frac{2s-1}{s} ).
 \end{aligned}
\end{equation}

Combining equations on $n$ and $\p{D}$, we obtain model \eqref{eq:per_zyg_p}.

\newpage

\subsection{Model with partial conversion} 

\subsubsection{Conversion in the zygote}\label{an:rewrite2}

We rewrite model \eqref{eq:par_zyg_nD_nW} with variables:

\begin{equation}
    n = \n{W} + \n{D}, \quad \quad \quad \quad  \p{D} = \dfrac{\n{D}}{\n{W} + \n{D}}.
\end{equation} 

where $n$ is the total population density and $\p{D}$ is the drive allele frequency.\\

\textbf{Equation on $\mathbf{n}$:} \begin{equation}
\begin{aligned}
   \partial_t n - \partial_{xx}^2 n & =  \dfrac{ r \ (1-n)+1 }{n} \ \Big( (1-s) \ \n{D}^2 + [ 2 \ c \ (1-s)  + 2 \ (1-sh) \ (1-c) ] \  \n{D} \n{W} + \n{W}^2 \Big) - n, \\
   & =  \dfrac{ r \ (1-n)+1 }{n} \ \Big( (1-s) \ (n\p{D})^2 + [ 2 \ c \ (1-s)  + 2 \ (1-sh) \ (1-c) ] \  \p{D} (1-\p{D}) n^2 + (1-\p{D})^2 n^2 \Big) - n, \\
   &  =  \big( r \ (1-n)+1 \big) \ \Big( (1-s) \ \p{D}^2 + [ 2 \ c \ (1-s)  + 2 \ (1-sh) \ (1-c) ] \  \p{D} \ (1-\p{D}) + (1-\p{D})^2 \Big) n - n,\\
    & =  \big( r \ (1-n)+1 \big) \ \Big( (1-s) \ \p{D}^2 + 2 \ \p{D}  \ (1-\p{D}) \ [ c \ (1-s)  + (1-c) \ (1-sh)  ] + (1-\p{D})^2  \Big) n - n.
\end{aligned}
\end{equation}

\textbf{Equation on $\mathbf{\n{D}}$:} \begin{equation}
\begin{aligned}
 \partial_t \n{D}  & =  \partial_{xx}^2 \n{D} \  + \dfrac{ r \ (1-n)+1 }{n} \Big[ \ (1-s)  (\n{D} +  2  \  c  \ \n{W} ) +   (1-sh)   \  (1-c)  \   \n{W}  \Big] \n{D} - \n{D},  \\
&  = \partial_{xx}^2 \n{D} \   + \dfrac{ r \ (1-n)+1 }{n} \Big[ \ (1-s)  ( n \ \p{D}  +  2  \  c  \  n \ (1-\p{D}) ) +   (1-sh)   \  (1-c)  \    n \ (1-\p{D})  \Big] \  n \ \p{D}  - n \ \p{D}, \\
 & \stackrel{\footnotemark}{=}
 \p{D} \ \partial_{xx}^2 n + 2 \ \partial_x n \ \partial_x \p{D} +  n \ \partial_{xx}^2 \p{D}  \    \\
   & \quad \quad + \big( r \ (1-n)+1 \big) \Big[ \ (1-s)  ( \p{D}  +  2  \  c  \ (1-\p{D}) ) +   (1-sh)   \  (1-c)  \ (1-\p{D})  \Big] \  n \ \p{D} - n \ \p{D}.  \\
\end{aligned}
\end{equation}

\footnotetext{$\partial_{xx}^2 \n{D} = \partial_{xx}^2 n \p{D} =  \partial_{x} ( \p{D} \ \partial_{x} n + n \ \partial_{x} \p{D} ) = \p{D} \ \partial_{xx}^2  n + 2 \ \partial_{x} \p{D}  \ \partial_{x} n + n  \ \partial_{xx}^2  \p{D} $} 


\textbf{Equation on $\mathbf{\p{D}= \dfrac{\n{D}}{n}}$:}\begin{equation}
\begin{aligned}
    \partial_t \p{D} & = \dfrac{ n \ (\partial_t \n{D}) - \n{D} \ (\partial_t n)}{n^2} = \dfrac{ n \ (\partial_t \n{D}) - n \ \p{D} \ (\partial_t n)}{n^2} = \dfrac{1}{n} \big( \ \partial_t \n{D} - \p{D} \ (\partial_t n) \big), \\
    & = \dfrac{1}{n} \Big[ 2 \ \partial_x n \ \partial_x \p{D} +  n \ \partial_{xx}^2 \p{D}  \  + \big( r \ (1-n)+1 \big) \Big( \ (1-s)  ( \p{D}  +  2  \  c  \ (1-\p{D}) ) +   (1-sh)   \  (1-c)  \ (1-\p{D})   \\
    & \quad \quad  -  (1-s) \ \p{D}^2 - 2 \ \p{D}  \ (1-\p{D}) \ [ c \ (1-s)  + (1-c) \ (1-sh)  ] - (1-\p{D})^2  \Big) \ \p{D} \ n  \Big], \\
     & =  2 \ \partial_x \log(n) \ \partial_x \p{D} +  \partial_{xx}^2 \p{D}   + \big( r \ (1-n)+1 \big) \Big( \ \p{D} \ (1-s)  \  (1-\p{D})  +   2  \ (1-s) \   c  \ (1-\p{D})  +   (1-sh)   \  (1-c)  \ (1-\p{D})   \\
   & \quad \quad   - 2 \ \p{D}   \  c \ (1-s) \ (1-\p{D})  - 2 \ \p{D}   \ (1-c) \ (1-sh) \ (1-\p{D})  - (1-\p{D})^2  \Big) \ \p{D}   \Big], \\
  & =  2 \ \partial_x \log(n) \ \partial_x \p{D} +  \partial_{xx}^2 \p{D}  \  + \big( r \ (1-n)+1 \big) \Big( \p{D} \big[ (1-s) \ (1-2c) -  2 \ (1-sh)\   (1-c)   + 1 \big] + 2 \  (1-s) \ c \\
& \quad \quad + (1-sh) \ (1-c) - 1   \Big) \ (1-\p{D}) \ \p{D} ,  \\
   & =  2 \ \partial_x \log(n) \ \partial_x \p{D} +  \partial_{xx}^2 \p{D}  \\
   & \quad \quad + \big( r \ (1-n)+1 \big) \Big( [1- 2(1-c)(1-h)] s \p{D} - s [ 1-(1-c)(1-h)] + c (1-s) \Big) \ (1-\p{D}) \ \p{D} .  \\
\end{aligned}
\end{equation}

Combining equations on $n$ and $\p{D}$, we obtain model \eqref{eq:par_zyg_p}.

  

\newpage

\subsubsection{Conversion in the germline}\label{an:rewrite3}

We rewrite model \eqref{eq:par_ger_nD_nW} with variables:

\begin{equation}
    n = \n{W} + \n{D}, \quad \quad \quad \quad  \p{D} = \dfrac{\n{D}}{\n{W} + \n{D}}.
\end{equation} 

where $n$ is the total population density and $\p{D}$ is the drive allele frequency.\\

\textbf{Equation on n:} 
\begin{equation}
\begin{aligned}
    \partial_t n - \partial_{xx}^2 n & =  \dfrac{ r \ (1-n)+1 }{n} \ \Big( (1-s) \ \n{D}^2 +  2 \ (1-sh) \  \n{D} \n{W} + \n{W}^2 \Big) - n, \\
     & =  \dfrac{ r \ (1-n)+1 }{n} \ \Big( (1-s) \ (n\p{D})^2 + 2 \ (1-sh) \  \p{D}  (1-\p{D}) n^2 + ((1-\p{D})n)^2 \Big) - n, \\
    & =  \big( r \ (1-n)+1 \big) \ \Big( (1-s) \ \p{D}^2 + 2 \ (1-sh) \  \p{D} \ (1-\p{D}) + (1-\p{D})^2 \Big) n - n. \\
\end{aligned}
\end{equation}

\textbf{Equation on $\mathbf{\n{D}}$:} 

\begin{equation}
\begin{aligned}
\partial_t \n{D} & =  \partial_{xx}^2 \n{D} \  + \dfrac{ r \ (1-n)+1 }{n} \Big[ \ (1-s) \n{D}  +   (1-sh)   \  (1+c)  \   \n{W}  \Big] \n{D} - \n{D},  \\
&  =  \p{D} \ \partial_{xx}^2 n + 2 \ \partial_x n \ \partial_x \p{D} +  n \ \partial_{xx}^2 \p{D}  \  + \dfrac{ r \ (1-n)+1 }{n} \Big[ \ (1-s) n \p{D} +   (1-sh)   \  (1+c)  \  n \ (1-\p{D})  \Big] \  n \ \p{D}  - n \ \p{D}, \\
& \stackrel{\footnotemark}{=}  \p{D} \ \partial_{xx}^2 n + 2 \ \partial_x n \ \partial_x \p{D} +  n \ \partial_{xx}^2 \p{D}  \  + \Big( r \ (1-n)+1 \Big) \Big[ \ (1-s) \p{D} +   (1-sh)   \  (1+c)  \   (1-\p{D})  \Big] \  n \ \p{D}  - n \ \p{D}.  \\
\end{aligned}
\end{equation}

\footnotetext{$\partial_{xx}^2 \n{D} = \partial_{xx}^2 n \p{D} =  \partial_{x} ( \p{D} \ \partial_{x} n + n \ \partial_{x} \p{D} ) = \p{D} \ \partial_{xx}^2  n + 2 \ \partial_{x} \p{D}  \ \partial_{x} n + n  \ \partial_{xx}^2  \p{D} $} 

\textbf{Equation on $\mathbf{\p{D}= \dfrac{\n{D}}{n}}$:}



\begin{equation}
\begin{aligned}
    \partial_t \p{D} & = \dfrac{ n \ (\partial_t \n{D}) - \n{D} \ (\partial_t n)}{n^2} = \dfrac{ n \ (\partial_t \n{D}) - n \ \p{D} \ (\partial_t n)}{n^2} = \dfrac{1}{n} \big( \ \partial_t \n{D} - \p{D} \ (\partial_t n) \big), \\
  & = \dfrac{1}{n} \Big[ 2 \ \partial_x n \ \partial_x \p{D} +  n \ \partial_{xx}^2 \p{D}  \  + \big( r \ (1-n)+1 \big) \Big( \   (1-s) \  \p{D} +   (1-sh)   \  (1+c)  \   (1-\p{D})    \\
   &  \quad \quad  - (1-s) \ \p{D}^2 - 2 \ (1-sh) \  \p{D} \ (1-\p{D}) - (1-\p{D})^2 \Big) \  \p{D} \  n  \ \Big], \\
   & =  2 \ \partial_x \log(n) \ \partial_x \p{D} +   \partial_{xx}^2 \p{D}  \  + \big( r \ (1-n)+1 \big) \Big( (1-s) \ \p{D} + (1-sh)(1+c) - 2 \ (1-sh) \ \p{D} - (1-\p{D}) \Big) \ \p{D} \ (1-\p{D}), \\
   & =  2 \ \partial_x \log(n) \ \partial_x \p{D} +   \partial_{xx}^2 \p{D}  \  + \big( r \ (1-n)+1 \big) \Big( (2h-1) \  s \ \p{D} + (1-sh) (1+c) - 1  \Big) \ \p{D} \ (1-\p{D}) .
\end{aligned}
\end{equation}

Combining equations on $n$ and $\p{D}$, we obtain model \eqref{eq:par_ger_p}.

\newpage


\section{Proofs for model \eqref{eq:per_zyg_p} with perfect conversion in the zygote} \label{ann:perfect}

\subsection{Numerical evidence for the continuity when $r \rightarrow 0$} \label{ann:continuity}


In Figure \ref{fig:continuity}, we plot the speed of the traveling wave solutions of  model \eqref{eq:per_zyg_p} for a range of $r$ and $s$ values, and for $r=0$. A positive speed correspond to drive invasion.

\begin{figure}[H]
        \centering
     \includegraphics[scale = 0.7]{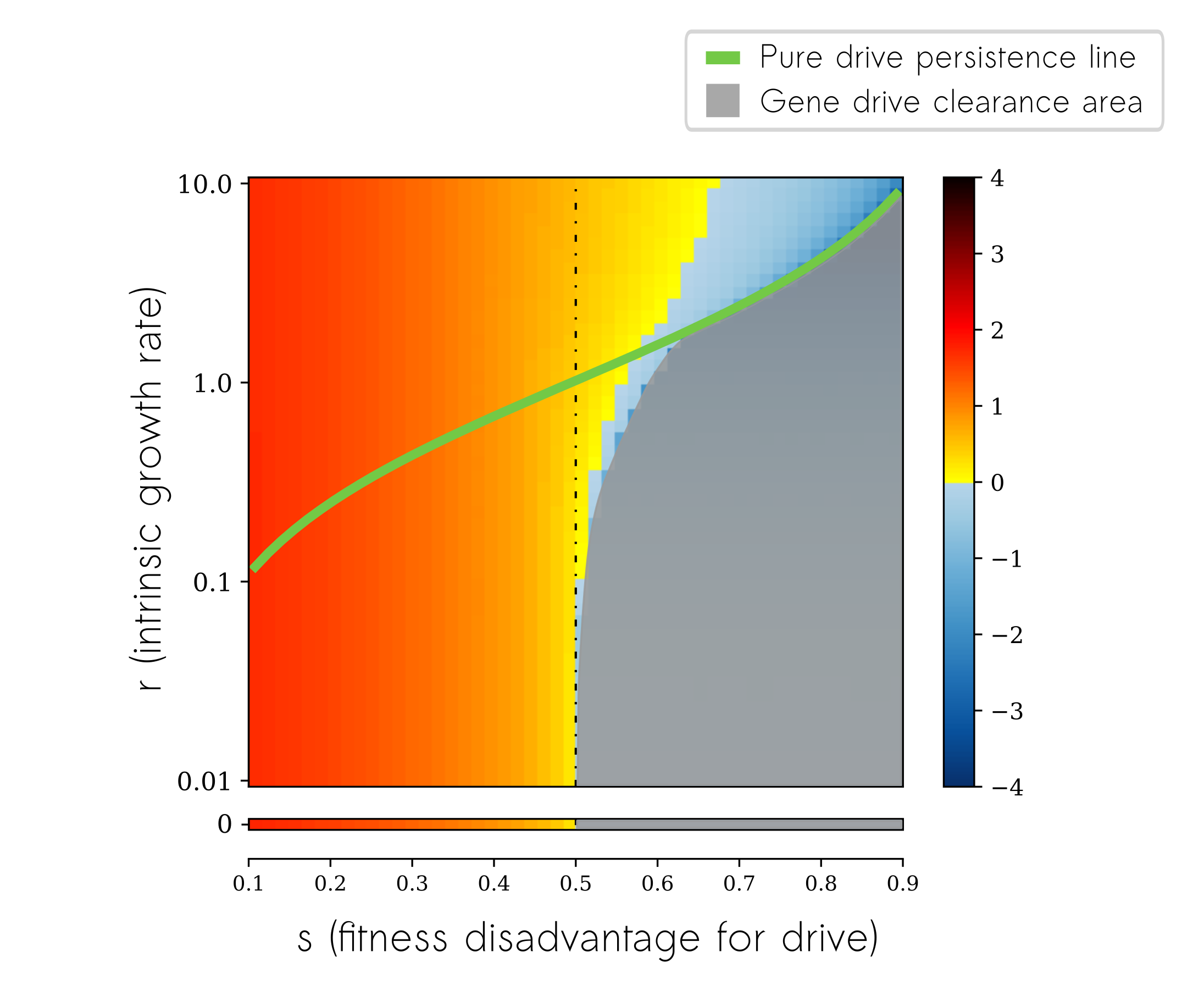}
        \caption{Wave speed values in model with perfect conversion in the zygote (\ref{eq:per_zyg_p}), 
        regarding parameters $r$ the intrinsic growth rate (log scale in between $0.01$ and $10$, plus the exact value $r=0$ in the bottom color line) and $s$ the fitness disadvantage for drive (normal scale). Below the pure drive persistence line (light green), a well-mixed population containing only drive homozygous individuals will necessarily go extinct.}
        \label{fig:continuity}
\end{figure}

We observe continuity in the speed value when $r \rightarrow 0$ away from $s = \frac{1}{2}$, meaning that the case $r=0$ is relevant to approximate very small intrinsic growth rates.




\newpage

\subsection{Proof of the statements in Tables \ref{tab:per_zyg_rinf} and \ref{tab:per_zyg_r0} when perfect conversion occurs in the zygote} \label{ann:tab_perfect}

In this section we prove the statements of Tables \ref{tab:per_zyg_rinf} and \ref{tab:per_zyg_r0} on the two models of interest:


\underline{$\mathbf{r = \infty}$}
        \begin{equation} \label{ann:sys_rinf}
        \partial_t p - \partial_{xx}^2 p =  \dfrac{s \ p \  (1-p) \ \big(p- \dfrac{2s -1}{s} \big)}{1-s+s(1-p)^2} = f^{\infty}(p).
        \end{equation}

\underline{$\mathbf{r = 0}$} 
\begin{equation} \label{ann:sys_r0}
\left\{
    \begin{array}{ll}
    \partial_t \n{DD} - \partial_{xx}^2 \n{DD} & = \ (1-s)  \ \dfrac{\n{WW}  \n{DD}}{\n{WW}+\n{DD}} - s \ \n{DD} = f^0(\n{DD}, \n{WW}) \\
    \\
     \partial_t \n{WW} - \partial_{xx}^2 \n{WW} & = \  \dfrac{- \ \n{WW} \n{DD}}{\n{WW}+\n{DD}}.
    \end{array}
\right.
\end{equation}

\vspace{0.2cm}

\textbf{Monostable / Bistable}

\begin{itemize}
    \item[] \underline{$\mathbf{r = \infty}$}  

    \begin{minipage}[t]{0.15\linewidth}
     $0<s<0.5$ 
    \end{minipage} \hfill 
    \begin{minipage}[t]{0.85\linewidth}
        The equation admits two admissible steady states $0$ and $1$. 
        
        As $(f^{\infty})'(0)>0$ and $(f^{\infty})'(1)<0$, the only stable state is $p=1$.
    \end{minipage}

    \begin{minipage}[t]{0.15\linewidth}
     $0.5<s<1$ 
    \end{minipage} \hfill 
    \begin{minipage}[t]{0.85\linewidth}
       The equation admits three admissible steady states $0$, $\dfrac{2s-1}{s}$ and $1$. 
       
       As $(f^{\infty})'(0)<0$, $(f^{\infty})'(\frac{2s-1}{s})>0$ and $(f^{\infty})'(1)<0$, both $p=0$ and $p=1$ are stable states.
    \end{minipage}

\item[] \underline{$\mathbf{r = 0}$} 

    \begin{minipage}[t]{0.15\linewidth}
     $0<s<0.5$ 
    \end{minipage} \hfill 
    \begin{minipage}[t]{0.85\linewidth}
        The system admits $(\n{DD}=0, \n{WW} \in[0,1])$ as admissible steady states. The Jacobian matrix, when switching to $n$ and $\p{D}$ variables, indicates that the only stable state is $(n=0, \p{D}=1)$, i.e. $(\n{DD}=0, \n{WW}=0)$.
    \end{minipage}

    \begin{minipage}[t]{0.15\linewidth}
     $0.5<s<1$ 
    \end{minipage} \hfill 
    \begin{minipage}[t]{0.85\linewidth}
       The system admits $(\n{DD}=0, \n{WW} \in[0,1])$ as admissible steady states. The Jacobian matrix, when switching to $n$ and $\p{D}$ variables, indicates that the stable states are $(n=0, \p{D}=1)$ and $(n \in[0,1], \p{D}=0)$, i.e. $(\n{DD}=0, \n{WW} \in[0,1])$.
    \end{minipage}

\end{itemize}

\vspace{0.2cm}

\textbf{Existence of critical traveling waves}

\begin{itemize}

\item[] \underline{$\mathbf{r = \infty}$} 

\begin{itemize}
\item[] The existence of traveling waves for the scalar equation \eqref{eq:per_zyg_rinf} in both monostable and bistable cases is a classical result in the theory of reaction-diffusion equations, see for instance the seminal works in \cite{aronson1975, aronson1978}. 
\end{itemize}

\vspace{0.2cm}

\item[] \underline{$\mathbf{r = 0}$} 

\begin{minipage}[t]{0.15\linewidth}
     $0<s<0.5$ 
    \end{minipage} \hfill 
    \begin{minipage}[t]{0.85\linewidth}
       We apply the results of Appendix \ref{ann:exist} with $\beta_1 = 1$ and $\beta_2 = 1 - s$. Therefore system \eqref{ann:sys_r0} admits a traveling wave when $0<s<0.5$. 
\end{minipage}

\begin{minipage}[t]{0.15\linewidth}
     $0.5<s<1$ 
    \end{minipage} \hfill 
    \begin{minipage}[t]{0.85\linewidth} There is no drive propagation due to the gene drive clearance: the drive allele density decreases uniformly in space (details in section \ref{ann:gd_clearance}). Regarding the wild-type alleles, their dynamic is given by the heat equation implying only diffusion and no growth. It cannot admit traveling wave solutions.
    \end{minipage}

 \end{itemize}
 
 \vspace{0.2cm}

\textbf{Pulled/pushed waves and speed values}

For both models, the  speed of the linearized problem around zero density of drive allele is given by $ 2 \sqrt{1-2s} = 2 \sqrt{(f^{\infty})'(0)} = 2 \sqrt{ \partial_p f^0(0,1)}$.

 \begin{itemize}

\item[] \underline{$\mathbf{r = \infty}$} 


    \begin{minipage}[t]{0.15\linewidth}
     $0<s \lesssim 0.35$ 
    \end{minipage} \hfill 
    \begin{minipage}[t]{0.85\linewidth}
      Numerically, we observe that the speed of the wave is equal to the minimal speed of the linearized problem: the wave is pulled (detail in section \ref{ann:num_threshold})
    \end{minipage}

    \begin{minipage}[t]{0.15\linewidth}
     $0.35 \lesssim s<0.5$ 
    \end{minipage} \hfill 
    \begin{minipage}[t]{0.85\linewidth}
      Numerically, we observe that the speed of the wave is strictly above the minimal speed of the linearized problem: the wave is pushed (detail in section \ref{ann:num_threshold}).
    \end{minipage}

    \begin{minipage}[t]{0.15\linewidth}
     $0.5<s<1$ 
    \end{minipage} \hfill 
    \begin{minipage}[t]{0.85\linewidth}
        As the system is bistable, the wave is necessarily pushed. The numerical approximation $s \approx 0.70$ indicating whether the drive of the wild-type population will invade the environment was already determined in the work of Tanaka et al \cite{tanaka2017}. 
    \end{minipage}

\item[] \underline{$\mathbf{r = 0}$} 

    \begin{minipage}[t]{0.15\linewidth}
     $0<s<0.5$ 
    \end{minipage} \hfill 
    \begin{minipage}[t]{0.85\linewidth}
     We apply the results of Appendix \ref{ann:exist} with $\beta_1 = 1$ and $\beta_2 = 1 - s$. Therefore system \eqref{ann:sys_r0} admits a traveling wave with speed $v = 2 \sqrt{1-2s}$ when $0<s<0.5$. This value corresponds to the KPP speed, by definition the wave is pulled.
    \end{minipage}

    \begin{minipage}[t]{0.15\linewidth}
     $0.5 < s < 1 $ 
    \end{minipage} \hfill 
    \begin{minipage}[t]{0.85\linewidth}
       No wave (see above, in Existence of critical traveling waves). 
    \end{minipage}
 \end{itemize}


\vspace{1cm}

\subsubsection{Gene drive clearance for $s \in (0.5,1)$ when $r = 0$}\label{ann:gd_clearance}



Consider the model with perfect conversion in the zygote (\ref{eq:per_zyg}). The densities $\n{WW}$ and $\n{DD}$ dynamics are qualitatively given in Figure \ref{fig:per_zyg_densities_r0} for $r=0$. 

\begin{figure}[H]
\centering
\begin{subfigure}{0.48\textwidth}
    \centering
    \caption{Spreading eradication drive when $0<s<0.5$.}
    \includegraphics[scale = 0.7]{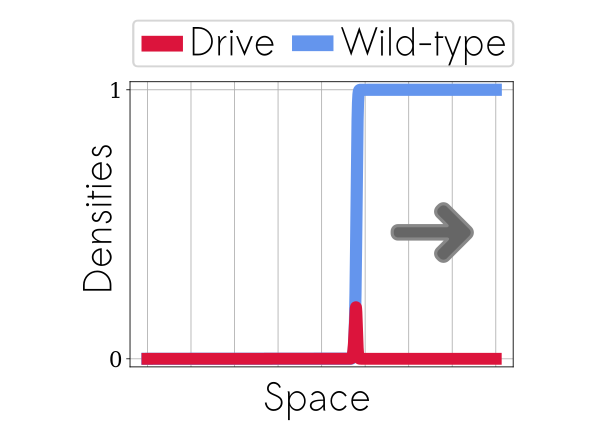}
    \label{subfig:eradication_drive_r0}
\end{subfigure}
\hfill
\begin{subfigure}{0.48\textwidth}
    \centering
    \caption{Gene drive clearance when $0.5<s<1$}
    \includegraphics[scale = 0.7]{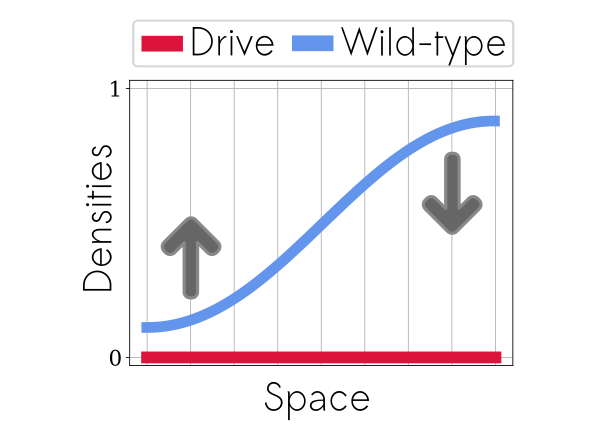}
    \label{subfig:gene_drive_clearance_r0}
\end{subfigure}
\caption{Qualitative dynamics of the drive homozygotes density $\n{DD}$ (red line) and the wild-type homozygotes density $\n{WW}$ (blue line) in space.}
\label{fig:per_zyg_densities_r0}
\end{figure}

When $s>0.5$ and $r=0$, we observe gene drive clearance (in Figure \ref{subfig:gene_drive_clearance_r0}). More precisely, we have the following estimate, deduced from \eqref{eq:per_zyg_r0}:
\begin{equation}
\partial_t \n{DD} - \partial_{xx}^2 \n{DD} \leq \ (1-2s) \ \n{DD},
\end{equation}
Therefore, $\n{DD}$ is exponentially decaying in time, uniformly in space. The dynamics of the wild type then boil down to the standard heat equation, there cannot exist a traveling wave.


\vspace{1cm}

\subsubsection{Numerical approximation of $s$ threshold value for the pulled/pushed wave when $r=+ \infty$}\label{ann:num_threshold}

In order to determine an approximation of the threshold value at which the wave switches from a pulled wave to a pushed wave, we used the recent continuation procedure published in \cite{avery2022}. Figure \ref{fig:continuation-Holzer} presents the value of the wave speed obtained via the latter continuation numerical scheme \cite{holzer2022}, for a wide range of $s$ values. Notice the transition between pulled fronts (plain red) and pushed fronts (plain green). For the sake of clarity, the value of the minimal speed of the linearized problem $v = 2 \sqrt{1-2s}$ is shown in red for $s\in (0,\frac12)$. Notice that the speed of the pushed front changes sign approximately at $s\approx 0.70$, in agreement with the theoretical criterion \eqref{eq:integral}. 

\begin{figure}[H]
        \centering
     \includegraphics[scale = 0.3]{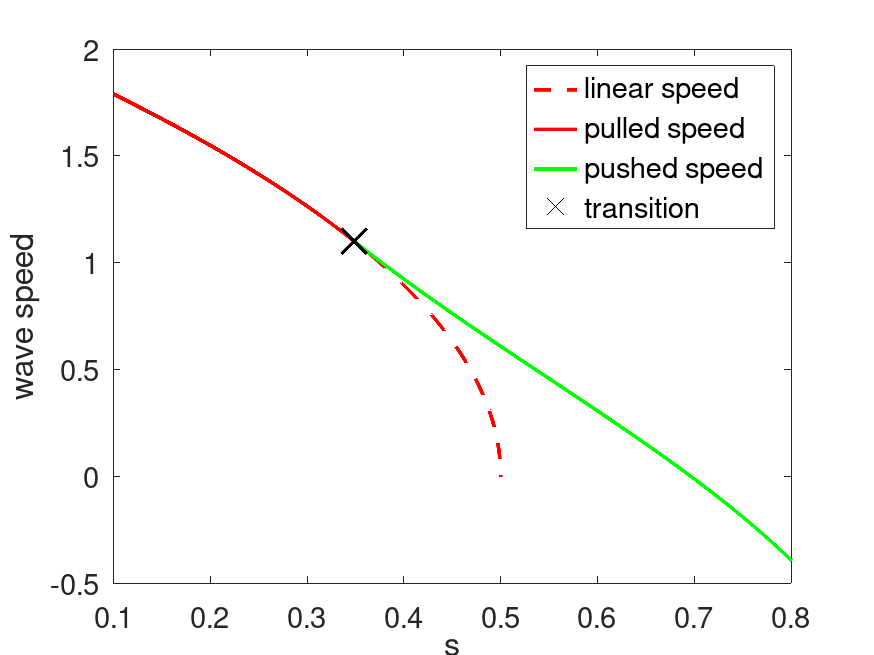}
        \caption{Value of the wave speed when $r=+ \infty$ obtained via the numerical scheme in \cite{holzer2022}, for a wide range of $s$ values. The transition between pulled fronts (plain red) and pushed fronts (plain green) is approximately $0.35$. For the sake of clarity, the value of the minimal speed of the linearized problem $v = 2 \sqrt{1-2s}$ is shown in red for $s\in (0,\frac12)$.}
        \label{fig:continuation-Holzer}
\end{figure}

\newpage

\section{Critical traveling wave for an SI similar model.} \label{ann:exist}

Consider the following epidemiological model, 

\begin{equation}\label{ann:si}
\left\{
    \begin{array}{ll}
        \partial_t S - \partial_{xx}^2 S & =  \ - \beta \ \dfrac{\ S \ I}{S + I},\\
        \\
       \partial_t I - \partial_{xx}^2 I & = \ \beta \ \dfrac{ S \ I}{S + I} - \gamma I.\\
    \end{array}
\right.
\end{equation}
where $S$ is the density of susceptible individuals, $I$ is the density of infected individuals, $\gamma$ is the mortality of infected individuals and $\beta$ is the transmission coefficient. This model has already been studied in the literature, see \cite{zhou2019} and references therein. In particular, the existence of a minimal traveling wave has been established in the latter reference.

Models \eqref{eq:per_zyg_r0}, \eqref{eq:par_zyg_r0} and \eqref{eq:par_ger_r0} are very similar to the above SI model \eqref{ann:si}, except that the coefficient $\beta$ is different in the first and the second equation of the system. We write this new system with two coefficients $\beta_1, \beta_2$:

 \begin{equation}\label{ann:simsi}
    \left\{
    \begin{array}{ll}
        \partial_t S - \partial_{xx}^2 S & =  \ - \beta_1 \ \dfrac{ \ S \ I}{S + I}, \\
        \\
       \partial_t I - \partial_{xx}^2 I & = \ \beta_2 \ \dfrac{ S \ I}{S + I} - \gamma I.
    \end{array}
    \right.
\end{equation}


\subsection{Existence of critical traveling wave solutions}

We are able to establish the following Theorem by adapting the proof in \cite{zhou2019}. 

\begin{theorem}
Suppose that $\beta_1>0$, and $\beta_2 > \gamma$, then system (\ref{ann:simsi}) admits a positive and bounded traveling wave solution with profile $(S^*, I^*)$, and speed $v = 2 \sqrt{\beta_2 - \gamma}$. Furthermore, both $S^*$ and $I^*$ are positive, and bounded by $1$ and $\frac{\beta_2 - \gamma}{\gamma}$ respectively. 
\end{theorem}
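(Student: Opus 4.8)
The plan is to adapt the sub- and super-solution scheme of \cite{zhou2019} to the case $\beta_1\neq\beta_2$, and to obtain the wave at the minimal speed as a limit of supercritical waves. Writing $S(t,x)=S(\xi)$, $I(t,x)=I(\xi)$ with $\xi=x-vt$, a profile must solve the autonomous system
\begin{equation*}
S''+vS'=\beta_1\,\frac{SI}{S+I},\qquad I''+vI'=\gamma I-\beta_2\,\frac{SI}{S+I},
\end{equation*}
with the boundary behaviour $(S,I)(+\infty)=(1,0)$ and $I(-\infty)=0$ (the state behind the front being $S\to S_-\in[0,1)$). Linearising the second equation at $(1,0)$ yields the Fisher--KPP operator $\partial_t u-\partial_{xx}^2 u=(\beta_2-\gamma)u$, whose minimal front speed is $v^\ast:=2\sqrt{\beta_2-\gamma}$; the characteristic roots of $\mu^2+v\mu+(\beta_2-\gamma)=0$ are complex when $v<v^\ast$, so no non-negative profile can exist below $v^\ast$. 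This is why the statement singles out $v=v^\ast$.

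I would next record the a priori structure, which makes the extension to $\beta_1\neq\beta_2$ essentially automatic. From the first equation, $(e^{v\xi}S')'=\beta_1 e^{v\xi}\,\tfrac{SI}{S+I}\ge0$, so $e^{v\xi}S'$ is nondecreasing, and since $S'$ is bounded while $e^{v\xi}\to0$ as $\xi\to-\infty$, we get $e^{v\xi}S'\ge0$, i.e.\ $S'\ge0$; hence $0<S<S(+\infty)=1$. (This uses only the $S$-equation, in contrast with \cite{zhou2019}, where the bound on $S$ was obtained by controlling $S+I$ through $\beta_1=\beta_2$.) Moreover, at any interior maximum $\xi_0$ of $I$ we have $I'(\xi_0)=0$, $I''(\xi_0)\le0$, so $\gamma I(\xi_0)\le\beta_2\tfrac{S(\xi_0)I(\xi_0)}{S(\xi_0)+I(\xi_0)}$, i.e.\ $\gamma\bigl(S(\xi_0)+I(\xi_0)\bigr)\le\beta_2 S(\xi_0)$, whence $\gamma I(\xi_0)\le(\beta_2-\gamma)S(\xi_0)\le\beta_2-\gamma$. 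Thus any such profile satisfies $0<S<1$ and $0\le I\le\tfrac{\beta_2-\gamma}{\gamma}$, exactly the bounds in the statement.

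Then I would prove existence for every supercritical speed $v>v^\ast$ by a Schauder fixed-point / monotone-iteration argument on truncated intervals $[-m,m]$, following \cite{zhou2019}. With $\mu_-<\mu_+<0$ the real roots of $\mu^2+v\mu+(\beta_2-\gamma)=0$, one uses ordered sub/super-solutions of the usual KPP type: for $I$, the super-solution $\overline I(\xi)=\min\bigl\{\tfrac{\beta_2-\gamma}{\gamma},\,e^{\mu_+\xi}\bigr\}$ and the sub-solution $\underline I(\xi)=\max\bigl\{0,\,e^{\mu_+\xi}-q\,e^{\mu_2\xi}\bigr\}$ with $\mu_2\in(\mu_-,\mu_+)$ close to $\mu_+$ and $q$ large; for $S$, a small constant and the constant $1$. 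The nonlinearity $\tfrac{SI}{S+I}$ is bounded and Lipschitz off the origin, so the system may be recast as $(-\partial_{\xi\xi}+v\partial_\xi+\Lambda)(S,I)=\mathcal{N}(S,I)$ with $\mathcal{N}$ quasi-monotone for $\Lambda$ large, making the iteration well defined between the ordered solutions. Solving the truncated problems with the appropriate boundary data and letting $m\to\infty$, with interior elliptic estimates and the Arzel\`a--Ascoli theorem, yields a profile on $\mathbb{R}$ with the correct limits at $\pm\infty$ and, by the positive lower solution and the strong maximum principle, $0<S<1$ and $0<I\le\tfrac{\beta_2-\gamma}{\gamma}$ everywhere.

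To reach the critical speed I would take $v_n\downarrow v^\ast$ with waves $(S_n,I_n)$, translate each so that $I_n(0)=\kappa$ for a fixed $\kappa\in(0,\tfrac{\beta_2-\gamma}{\gamma})$ (possible because $0<I_n<\kappa$ near $+\infty$ while $I_n$ must exceed $\kappa$ somewhere, else $I_n$ would be dominated by a subsolution forcing $I_n\to0$ identically), and pass to the limit using the uniform bounds and standard parabolic/elliptic estimates. The normalisation prevents the limit from collapsing onto $(1,0)$ or $(S_-,0)$, so $(S^\ast,I^\ast)$ is a genuine wave of speed $v^\ast=2\sqrt{\beta_2-\gamma}$ with the asserted bounds. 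I expect the main obstacle to be precisely this non-degeneracy at the minimal speed: guaranteeing that the normalised limit does not degenerate requires a uniform-in-$n$ lower bound on the spread of $I_n$, the classical delicate point for pulled/critical waves; a secondary difficulty is checking that the coupled sub/super-solutions and the truncated boundary conditions remain consistent despite the degeneracy of $\tfrac{SI}{S+I}$ at $(S,I)=(0,0)$, which is exactly what forces the use of strictly positive lower solutions above.
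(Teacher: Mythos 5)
Your a priori bounds are fine (monotonicity of $S$ from $(e^{v\xi}S')'\ge 0$, and the interior-maximum bound $I\le\frac{\beta_2-\gamma}{\gamma}$), but the core of the theorem — existence of a wave \emph{at} the critical speed $v=2\sqrt{\beta_2-\gamma}$ — is exactly the step your plan leaves open. You propose to build supercritical waves for $v>v^*$ and pass to the limit $v_n\downarrow v^*$ after normalising $I_n(0)=\kappa$, and you yourself flag that the non-degeneracy of this limit is ``the main obstacle.'' That obstacle is not a technicality you can defer: the only lower bound on $\sup I_n$ available from your construction is $\sup\underline{I}_n$ with $\underline{I}_n(\xi)=e^{\mu_+\xi}-q\,e^{\mu_2\xi}$, and as $v_n\downarrow v^*$ the two roots $\mu_\pm$ coalesce, so $\mu_2\to\mu_+$, $q\to\infty$ and $\sup\underline{I}_n\to 0$; nothing in your argument prevents the normalisation constant $\kappa$ from being unreachable, nor the translated limit from collapsing to a constant state or losing its limits at $\pm\infty$. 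Without a uniform-in-$n$ lower bound on $\sup I_n$ (or some other compactness/non-triviality device), the proof of the stated theorem is incomplete. The paper avoids this entirely by working \emph{directly} at the critical speed: it constructs sub- and super-solutions adapted to the double root $\lambda^*=\sqrt{\beta_2-\gamma}$, namely $\bar I\propto z e^{-\lambda^* z}$ (capped by $M=\frac{\beta_2-\gamma}{\gamma}$), $\underline I=\big(eM\lambda^* z-L_2\sqrt z\big)_+e^{-\lambda^* z}$, $\bar S=1$ and $\underline S=\big(1-L_1e^{-z/L_1}\big)_+$, verifies the four cross-ordering inequalities needed because the system has no comparison principle, and applies Schauder's fixed point theorem once on the trapped set, so no limiting argument is needed.

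A secondary, but genuine, error: your choice of ``a small constant'' as a sub-solution for $S$ does not work. The sub-solution inequality must hold for every $I$ with $\underline I\le I\le\bar I$, and for $\underline S\equiv\epsilon>0$ it reads $0\le-\beta_1\,\epsilon I/(\epsilon+I)$, which fails wherever $\bar I>0$, i.e.\ everywhere. You need a sub-solution of the type used in the paper, $\underline S=\max\big(0,\,1-L_1e^{-z/L_1}\big)$, whose verification exploits the exponential decay of $\bar I$ ahead of the front; this is precisely where the extension from $\beta_1=\beta_2$ to general $\beta_1>0$ enters, since the constant $L_1$ must absorb the factor $\beta_1$ in the inequality $-v\underline S'-\underline S''\le-\beta_1\bar I$.
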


By adapting further the elements of \cite{zhou2019}, it would be possible to prove that the profile $S^*$ is increasing, whereas the profil $I^*$ is unimodal, as shown in Figure \ref{fig:cartoon S*I*}.

\begin{figure}[H]
        \centering
     \includegraphics[scale = 0.4]{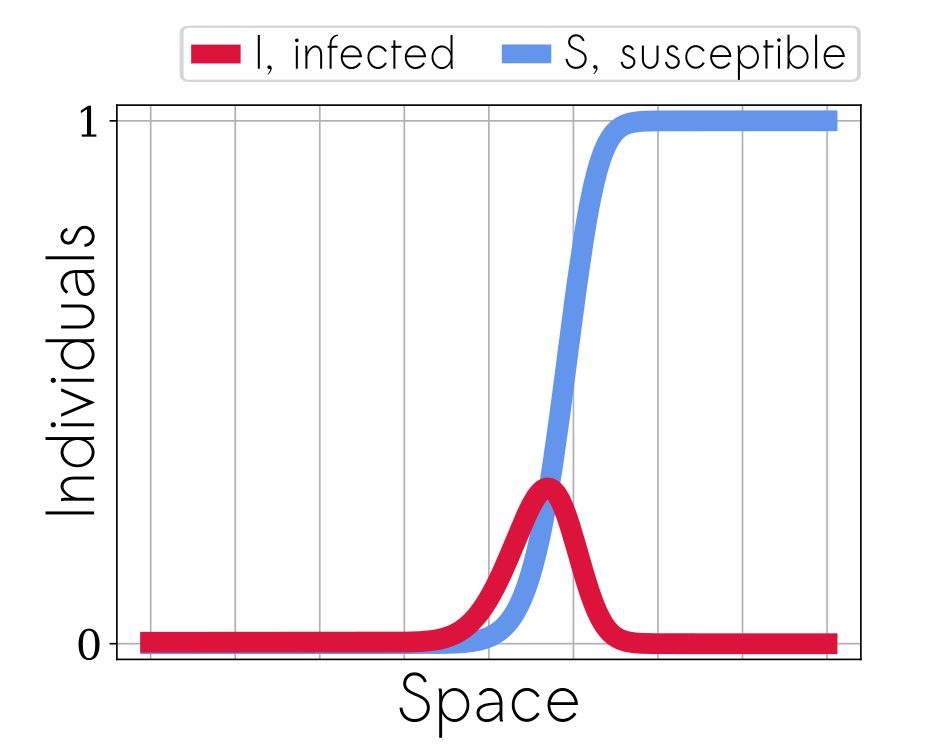}
        \caption{Qualitative shape of the solution ($I^*$ in red, $S^*$ in blue).}
        \label{fig:cartoon S*I*}
\end{figure}

\subsection{Proof of the theorem}

We proceed as follows: 

\begin{enumerate}
    \item Although the system does not satisfy the comparison principle due to a lack of monotonicity, the construction of traveling waves is performed through a construction of sub-solutions ($\textcolor{Cerulean}{\underline{S}}$, $\textcolor{orange}{\underline{I}}$) and super-solutions ($\textcolor{Orchid}{\bar{S}}$,$\textcolor{RedOrange}{\bar{I}}$) for the system.
    \vspace{0.2cm}
    \item Using Schauder’s fixed point theorem, we prove the existence of a critical traveling wave solution $(S^*, I^*)$ with speed $v$ such that $\textcolor{Cerulean}{\underline{S}}(z) \leq S^*(z) \leq \textcolor{Orchid}{\bar{S}}(z)$ and  $\textcolor{orange}{\underline{I}}(z) \leq I(z) \leq \textcolor{RedOrange}{\bar{I}}(z)$ for all  $z$ in $\mathbb{R}$.
    \vspace{0.2cm}
    \item Finally, we conclude with the positivity of the critical traveling wave solution thanks to the strong maximum principle.
\end{enumerate} 

\subsubsection{Construction of sub- and super-solutions}\label{an:sub_super}



We are seeking sub- and super-solutions, respectively ($\textcolor{Cerulean}{\underline{S}}$, $\textcolor{orange}{\underline{I}}$), ($\textcolor{Orchid}{\bar{S}}$,$\textcolor{RedOrange}{\bar{I}}$). Because of the non-monotonic coupling in the system, the following set of cross-relationships must be satisfied:
\begin{enumerate}
    \item $ -v  \ \textcolor{Orchid}{\textcolor{Orchid}{\bar{S}}}'  - \textcolor{Orchid}{\bar{S}}'' \geq - \beta_1  \ \dfrac{\textcolor{Orchid}{\bar{S}} \ I}{\textcolor{Orchid}{\bar{S}}+I}\quad \forall \textcolor{orange}{\underline{I}} \leq I \leq \textcolor{RedOrange}{\bar{I}} $;
    \item  $ -v  \ \textcolor{Cerulean}{\underline{S}}'  - \textcolor{Cerulean}{\underline{S}}'' \leq - \beta_1  \dfrac{\textcolor{Cerulean}{\underline{S}} \ I}{\textcolor{Cerulean}{\underline{S}}+I} \quad \forall \textcolor{orange}{\underline{I}} \leq I \leq \textcolor{RedOrange}{\bar{I}}  $;
    \item $ -v \ \textcolor{RedOrange}{\bar{I}}'  - \textcolor{RedOrange}{\bar{I}}'' \geq \beta_2 \  \dfrac{S \ \textcolor{RedOrange}{\bar{I}}}{S + \textcolor{RedOrange}{\bar{I}}} - \gamma \ \textcolor{RedOrange}{\bar{I}} \quad \forall \textcolor{Cerulean}{\underline{S}}  \leq S \leq \textcolor{Orchid}{\bar{S}}  $;
    \item  $ -v  \ \textcolor{orange}{\underline{I}}'  - \textcolor{orange}{\underline{I}}'' \leq \beta_2 \  \dfrac{S \  \textcolor{orange}{\underline{I}}}{S + \textcolor{orange}{\underline{I}}} - \gamma \ \textcolor{orange}{\underline{I}}\quad \forall \textcolor{Cerulean}{\underline{S}}  \leq S \leq \textcolor{Orchid}{\bar{S}}  $. 
\end{enumerate}

\vspace{0.2cm}

\lk{Inequalities 1,2,3 and 4 are valid in a weak sense. As $\textcolor{Cerulean}{\underline{S}}$ is a piece-wise differentiable function, the quantity $\textcolor{Cerulean}{\underline{S}}$ consists of functions on each sub-interval with a Dirac mass at the point of $\mathcal{C}^1$ discontinuity. However, the Dirac mass has the good sign in this case (the transition has a convex shape), hence the second derivative $\textcolor{Cerulean}{\underline{S}}''$ is non-negative in the sense of a measure. All signs are correct: $\textcolor{orange}{\underline{I}}$ has a convex transition also, and $\textcolor{RedOrange}{\bar{I}}$ has a concave transition. This key point is equivalent to the standard principle in the theory of parabolic equations (widely used for reaction-diffusion equations involving comparison techniques): "the maximum of sub-solutions is a sub-solution" (here, $\textcolor{Cerulean}{\underline{S}}$, $\textcolor{orange}{\underline{I}}$) and "the minimum of super-solutions is a super-solution" (here, $\textcolor{Orchid}{\bar{S}}$, $\textcolor{RedOrange}{\bar{I}}$).} \\

To define our sub and super-solutions, it is useful to introduce the the following family of functions $\mathcal I(z) = e^{- \lambda^* z}$, where $\lambda^*$ is solution of the following dispersion equation:
\begin{equation}\label{eq:charac}
   (\lambda^*)^2   - v \lambda^*   +  ( \beta_2 - \gamma )  = 0.
\end{equation}
They are solutions of the linearized problem 
\begin{equation}
    v \mathcal I'  + \mathcal I''   +  ( \beta_2 - \gamma )\  \mathcal I   = 0.
\end{equation}
For the critical speed $v = 2 \sqrt{(\beta_2 - \gamma)} $, the corresponding double root is $ \lambda^* = \dfrac{v}{2} = \sqrt{(\beta_2 - \gamma)} $.

\vspace{0.2cm}

\begin{lemma}
There exist two large enough constants $L_1 > 0$ and $L_2 > 0$, such that the functions $ \textcolor{Orchid}{\bar{S}}$, $\textcolor{Cerulean}{\underline{S}}$ $\textcolor{RedOrange}{\bar{I}}$, $ \textcolor{orange}{\underline{I}} $ defined below satisfy the conditions 1. 2. 3. and 4.:

\begin{minipage}{0.48\linewidth}
\begin{equation}
    \textcolor{Orchid}{\bar{S}} = 1.
    \vphantom{    \begin{array}{ll}
       0  \quad \quad & \forall z \leq  - L_1 \log (\dfrac{1}{L_1}), \\
       \\
       1 - L_1 e^{- \dfrac{z}{L_1} }  \quad \quad & \forall z > - L_1 \log (\dfrac{1}{L_1}). \\
    \end{array}
}
\end{equation}
\begin{equation}
    \textcolor{Cerulean}{\underline{S}} = \left\{
    \begin{array}{ll}
       0  \quad \quad & \forall z \leq L_1 \log (L_1), \\
       \\
       1 - L_1 e^{- \dfrac{z}{L_1} }  \quad \quad & \forall z > L_1 \log (L_1). \\
    \end{array}
    \right.
\end{equation}
\end{minipage} \hfill 
\begin{minipage}{0.48\linewidth}
    \begin{equation}
    \textcolor{RedOrange}{\bar{I}} = \left\{
    \begin{array}{ll}
       M  \quad \quad & \forall z \leq \dfrac{1}{\lambda^*}, \\
       \\
      e M \lambda^* z e^{- \lambda^* z}  \quad \quad & \forall z > \dfrac{1}{\lambda^*} . \\
    \end{array}
    \right.
\end{equation}
\begin{equation}
    \textcolor{orange}{\underline{I}} = \left\{
    \begin{array}{ll}
      0  \quad \quad & \forall z \leq \big( \dfrac{L_2}{e M \lambda^*} \big) ^2 ,\\
      \\
     \big( e M \lambda^* z - L_2 \sqrt{z} \big) e^{-\lambda^* z} \quad \quad & \forall z > \big( \dfrac{L_2}{e M \lambda^*} \big) ^2 .  
    \end{array}
    \right.
\end{equation}
\end{minipage}

with $M = \dfrac{\beta_2 - \gamma}{\gamma} = \dfrac{(\lambda^*)^2}{\gamma} $.
\end{lemma}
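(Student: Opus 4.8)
The plan is a direct verification. Conditions 1--4 are differential inequalities for four explicit piecewise-smooth functions, so I would (a) check each inequality on each smooth piece, and (b) check that the kinks point the right way --- for the sub-solutions $\underline S,\underline I$ we want \emph{upward} kinks, whose distributional second derivative carries a favorable non-positive Dirac mass in $-v u'-u''$, while the only kink of $\bar I$ (at $z=1/\lambda^*$) turns out to be $C^1$ and carries none. A preliminary reduction simplifies the four ``cross'' conditions: since $S\mapsto \tfrac{SI}{S+I}$ is increasing, the binding competitor in each condition is an endpoint. Thus condition 1 (resp.\ 3) reduces to testing $\bar S\equiv 1$ (resp.\ $\bar I$) against $I=\underline I$ (resp.\ $S=1$), and condition 2 (resp.\ 4) reduces to testing $\underline S$ (resp.\ $\underline I$) against $I=\bar I$ (resp.\ $S=\underline S$). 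One also records $\underline S\le\bar S$ and $\underline I\le\bar I$, both immediate once one notes that $z\mapsto z e^{-\lambda^* z}$ has maximum $\tfrac1{e\lambda^*}$, so $eM\lambda^* z e^{-\lambda^* z}\le M$, and that the nonlinearity $\tfrac{SI}{S+I}\le\min(S,I)$ extends continuously by $0$ on the coordinate axes.

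Condition 1 is trivial ($\bar S\equiv 1$ gives left side $0$ and right side $-\beta_1\tfrac{\underline I}{1+\underline I}\le 0$). Condition 3 is where $M=\tfrac{\beta_2-\gamma}{\gamma}=\tfrac{(\lambda^*)^2}{\gamma}$ is pinned down: on $\{z\le 1/\lambda^*\}$, $\bar I=M$ gives left side $0$, and $\beta_2\tfrac{M}{1+M}-\gamma M\le 0$ is exactly $\gamma M\ge(\beta_2-\gamma)\cdot 1$, which holds with equality; on $\{z>1/\lambda^*\}$, the critical-speed identity $-v\phi'-\phi''=(\lambda^*)^2\phi$ for $\phi(z)=z e^{-\lambda^* z}$ (a consequence of the double root $\lambda^*=v/2$ of \eqref{eq:charac}) yields $-v\bar I'-\bar I''=(\beta_2-\gamma)\bar I$, while $\beta_2\tfrac{\bar I}{1+\bar I}-\gamma\bar I\le(\beta_2-\gamma)\bar I$. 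Condition 2 is handled by choosing $L_1$ so large that $L_1\log L_1>1/\lambda^*$: on $\{\underline S=0\}$ both sides vanish, and on $\{z>L_1\log L_1\}$ one computes $-v\underline S'-\underline S''=(\tfrac1{L_1}-v)e^{-z/L_1}<0$, so it suffices that $(v-\tfrac1{L_1})e^{-z/L_1}\ge\beta_1\bar I(z)=\beta_1 eM\lambda^* z e^{-\lambda^* z}$ there, i.e.\ $v-\tfrac1{L_1}\ge\beta_1 eM\lambda^* z e^{-(\lambda^*-1/L_1)z}$; for $L_1$ large the map $z\mapsto z e^{-(\lambda^*-1/L_1)z}$ is decreasing on $[L_1\log L_1,\infty)$ with boundary value $\lesssim L_1^2(\log L_1)\,L_1^{-\lambda^* L_1}\to 0$, so the inequality holds uniformly.

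The crux is condition 4, and it is exactly why $\underline I$ carries the correction $-L_2\sqrt z\,e^{-\lambda^* z}$ rather than being a pure multiple of $z e^{-\lambda^* z}$: a pure exponential sub-solution at the critical speed leaves no room to absorb the sign of the nonlinearity. Using the companion identity $-v\psi'-\psi''=(\lambda^*)^2\psi+\tfrac14 z^{-3/2}e^{-\lambda^* z}$ for $\psi(z)=\sqrt z\,e^{-\lambda^* z}$, one gets on $\{z>z_0\}$, with $z_0=(L_2/(eM\lambda^*))^2$, the identity $-v\underline I'-\underline I''=(\beta_2-\gamma)\underline I-\tfrac{L_2}{4}z^{-3/2}e^{-\lambda^* z}$, whereas the reduced right side equals $\beta_2\tfrac{\underline S\,\underline I}{\underline S+\underline I}-\gamma\underline I=(\beta_2-\gamma)\underline I-\beta_2\tfrac{\underline I^2}{\underline S+\underline I}$. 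So condition 4 becomes the pointwise bound $\tfrac{L_2}{4}z^{-3/2}e^{-\lambda^* z}\ge\beta_2\tfrac{\underline I^2}{\underline S+\underline I}$ on $\{z>z_0\}$: the genuinely negative surplus produced by the $\sqrt z$ term must dominate the quadratic defect between the nonlinearity and its linearization. I would close this by taking $L_2$ large in terms of $L_1$ so that (i) $z_0\ge L_1\log(2L_1)$, which forces $\underline S\ge\tfrac12$ on the whole support $\{\underline I>0\}$ and hence $\tfrac{\underline I^2}{\underline S+\underline I}\le 2\underline I^2\le 2(eM\lambda^*)^2 z^2 e^{-2\lambda^* z}$ (using $\underline I\le\bar I\le eM\lambda^* z e^{-\lambda^* z}$), and (ii) $\tfrac{L_2}{8\beta_2(eM\lambda^*)^2}\ge\sup_{z>0}z^{7/2}e^{-\lambda^* z}$; together these give $\tfrac{L_2}{4}z^{-3/2}e^{-\lambda^* z}\ge 2\beta_2(eM\lambda^*)^2 z^2 e^{-2\lambda^* z}\ge\beta_2\tfrac{\underline I^2}{\underline S+\underline I}$.

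It remains to treat the kinks: at $z_0$ the right derivative of $\underline I$ is $\tfrac{eM\lambda^*}{2}e^{-\lambda^* z_0}>0$ against left derivative $0$, an upward kink, hence favorable for a sub-solution; likewise $\underline S$ at $L_1\log L_1$ (left derivative $0$, right derivative $\tfrac1{L_1}$); and $\bar I$ at $1/\lambda^*$ is $C^1$, so there is no Dirac to worry about. The one thing one must keep straight is the order of quantifiers --- first fix $L_1$ large (conditions 1--3 and the decay estimate in condition 2), then fix $L_2=L_2(L_1)$ large (condition 4), noting that $z_0$ itself grows with $L_2$, which is in fact what makes (i) and the vacuity of any ``intermediate'' range work out. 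Beyond this bookkeeping, every estimate is elementary; I expect the only real subtlety to be the design of the $\underline I$ ansatz, which is essential rather than cosmetic, and whose role is precisely to manufacture the $z^{-3/2}e^{-\lambda^* z}$ surplus that dominates the nonlinear defect.
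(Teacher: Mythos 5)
Your proof is correct and follows essentially the same route as the paper's: the same monotonicity/endpoint reduction to the strengthened conditions (i)--(iv), the same double-root identity making $z e^{-\lambda^* z}$ an exact solution of the linearized equation at the critical speed, and the same use of the $-L_2\sqrt{z}\,e^{-\lambda^* z}$ correction to generate the $\tfrac{L_2}{4}z^{-3/2}e^{-\lambda^* z}$ surplus that absorbs the quadratic defect in condition 4, with the constants fixed in the same order ($L_1$ first, then $L_2(L_1)$). Your closing of condition 4 (forcing $\underline{S}\geq\tfrac12$ via $z_0\geq L_1\log(2L_1)$ and bounding $\sup_{z>0}z^{7/2}e^{-\lambda^* z}$) is a slightly tidier bookkeeping than the paper's, and your explicit check of the kinks is a detail the paper leaves implicit, but the argument is the same.
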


\begin{proof}
Before we proceed with the proof, we introduce the following set of conditions, which are more restrictive than 1,2,3,4, but may appear more useful at some point in the calculations. When satisfied, they clearly imply 1,2,3,4. 

\begin{enumerate}[label=(\roman*)]
    \item $ -v  \ \textcolor{Orchid}{\bar{S}}'  - \textcolor{Orchid}{\bar{S}}'' \geq 0 $; 
    
    $ \quad $ 
     
    \item $ - v \ \textcolor{Cerulean}{\underline{S}}'  - \textcolor{Cerulean}{\underline{S}}'' \leq - \beta_1  \ \textcolor{RedOrange}{\bar{I}}   $ ;
    
   $ \quad $ 
    
    \item $  -v \ \textcolor{RedOrange}{\bar{I}}'  - \textcolor{RedOrange}{\bar{I}}'' \geq   (\beta_2 -  \gamma) \  \textcolor{RedOrange}{\bar{I}} $ ;

   $ \quad $ 
   
    \item  $ -v  \ \textcolor{orange}{\underline{I}}'  - \textcolor{orange}{\underline{I}}'' \leq \beta_2 \  \dfrac{\textcolor{Cerulean}{\underline{S}} \  \textcolor{orange}{\underline{I}}}{\textcolor{Cerulean}{\underline{S}} + \textcolor{orange}{\underline{I}}} - \gamma \ \textcolor{orange}{\underline{I}} $.
\end{enumerate}

We verify each of the four conditions on $ \textcolor{Orchid}{\bar{S}}$, $\textcolor{Cerulean}{\underline{S}}$ $\textcolor{RedOrange}{\bar{I}}$, $ \textcolor{orange}{\underline{I}} $:

\underline{Condition 1}: $ -v  \ \textcolor{Orchid}{\textcolor{Orchid}{\bar{S}}}'  - \textcolor{Orchid}{\bar{S}}'' \geq - \beta_1  \ \dfrac{\textcolor{Orchid}{\bar{S}} \ I}{\textcolor{Orchid}{\bar{S}}+I} $.

The constant function $ \textcolor{Orchid}{\bar{S}} = 1 $ satisfies the more restrictive condition (i) $  -v  \ \textcolor{Orchid}{\bar{S}}'  - \textcolor{Orchid}{\bar{S}}'' \geq 0 $ .

\vspace{0.2cm}

\underline{Condition 2}: $\textcolor{Cerulean}{\underline{S}}'  - \textcolor{Cerulean}{\underline{S}}'' \leq - \beta_1  \dfrac{\textcolor{Cerulean}{\underline{S}} \ I}{\textcolor{Cerulean}{\underline{S}}+I} $.

Let us take $L_1$ sufficiently large such that $ \dfrac{1}{\lambda^*} <  L_1 \log(L_1)$.

\begin{itemize}
    \item[$\bullet$] For $ z > L_1 \log(L_1) $:
\end{itemize}

Since $ \textcolor{RedOrange}{\bar{I}} = e \ M \  \lambda^*  \  z  \  e^{\lambda^*z}$ and  $ \ \  \textcolor{Cerulean}{\underline{S}} = 1 - L_1 e^{- \dfrac{z}{L_1} }$, the condition (ii) $ - v \ \textcolor{Cerulean}{\underline{S}}'  - \textcolor{Cerulean}{\underline{S}}'' \leq - \beta_1  \ \textcolor{RedOrange}{\bar{I}}   $ holds for a sufficiently large $L_1 > 0 $: \begin{align}
- v \ \textcolor{Cerulean}{\underline{S}}'  - \textcolor{Cerulean}{\underline{S}}''
 =  (\dfrac{1}{L_1} - v ) \  e^{- \dfrac{z}{L_1}} & \leq   -  \beta_1  \ e \ M \  \lambda^* \  z  \  e^{- \lambda^*z} = - \beta_1  \ \textcolor{RedOrange}{\bar{I}}, \\
    \iff \beta_1  \ e \ M \  \lambda^* \  z  \  e^{ ( \dfrac{1}{L_1} - \lambda^* ) z} & \leq   ( v - \dfrac{1}{L_1}  ) .
\end{align} 

\begin{itemize}
    \item[$\bullet$] For $ z \leq  L_1 \log(L_1) $:
\end{itemize}

\vspace{-0.2cm}

The condition 2. is verified since  $\textcolor{Cerulean}{\underline{S}} = 0$.

\vspace{0.2cm}

\underline{Condition 3}: $ -v \ \textcolor{RedOrange}{\bar{I}}'  - \textcolor{RedOrange}{\bar{I}}'' \geq \beta_2 \  \dfrac{S \ \textcolor{RedOrange}{\bar{I}}}{S + \textcolor{RedOrange}{\bar{I}}} - \gamma \ \textcolor{RedOrange}{\bar{I}} $.

\begin{itemize}
    \item[$\bullet$] For $ z \leq  \dfrac{1}{\lambda^*} $:
\end{itemize}

With $\textcolor{RedOrange}{\bar{I}} = M = \dfrac{\beta_2 -  \gamma}{\gamma} $: 

\begin{equation}
     - v  \ \textcolor{RedOrange}{\bar{I}}'  - \textcolor{RedOrange}{\bar{I}}'' = 0 =  \beta_2 \  \dfrac{M}{1+M} - \gamma \ M = \beta_2 \  \dfrac{\textcolor{Orchid}{\bar{S}} \ \textcolor{RedOrange}{\bar{I}}}{\textcolor{Orchid}{\bar{S}} + \textcolor{RedOrange}{\bar{I}}} - \gamma \ \textcolor{RedOrange}{\bar{I}} \geq \beta_2 \  \dfrac{S \ \textcolor{RedOrange}{\bar{I}}}{S + \textcolor{RedOrange}{\bar{I}}} - \gamma \ \textcolor{RedOrange}{\bar{I}} \quad \quad \quad \quad \forall S \leq \textcolor{Orchid}{\bar{S}}.
\end{equation}

\begin{itemize}
    \item[$\bullet$] For $ z > \dfrac{1}{\lambda^*} $:
\end{itemize}

Since $\textcolor{RedOrange}{\bar{I}}$ is proportional to $z  e^{- \lambda^* z}$, and $\lambda^*$ is precisely the double root of the characteristic equation \eqref{eq:charac}, we deduce that condition (iii) $  -v \ \textcolor{RedOrange}{\bar{I}}'  - \textcolor{RedOrange}{\bar{I}}'' -  (\beta_2 -  \gamma) \  \textcolor{RedOrange}{\bar{I}} \geq  0 $ is verified.

\vspace{0.4cm}

\underline{Condition 4}: $ -v  \ \textcolor{orange}{\underline{I}}'  - \textcolor{orange}{\underline{I}}'' \leq \beta_2 \  \dfrac{S \  \textcolor{orange}{\underline{I}}}{S + \textcolor{orange}{\underline{I}}} - \gamma \ \textcolor{orange}{\underline{I}} $. 
Let us take $L_2$ sufficiently large such that $ L_2 >  e M \lambda^* \sqrt{L_1 \log\left(L_1\right)}$.

\begin{itemize}
    \item[$\bullet$] For $ z \leq \Big( \dfrac{L_2}{e \ M \ \lambda^*} \Big)^2 $:
\end{itemize}

$\textcolor{orange}{\underline{I}} = 0$ so condition 4. is satisfied.

\begin{itemize}
    \item[$\bullet$] For $ z > \Big( \dfrac{L_2}{e \ M \ \lambda^*} \Big)^2 $:
\end{itemize}

The choice of $L_2$ implies $z  > L_1 \log(L_1)$, which means $ \textcolor{Cerulean}{\underline{S}} = 1 - L_1 e^{- \dfrac{z}{L_1}}$.

We can reformulate condition (iv) as follows:\begin{equation}\label{eq:(iv)_reformulated}
-v  \ \textcolor{orange}{\underline{I}}'  - \textcolor{orange}{\underline{I}}'' \leq \beta_2 \  \dfrac{\textcolor{Cerulean}{\underline{S}} \  \textcolor{orange}{\underline{I}}}{\textcolor{Cerulean}{\underline{S}} + \textcolor{orange}{\underline{I}}} - \gamma \ \textcolor{orange}{\underline{I}}  \iff  -v  \ \textcolor{orange}{\underline{I}}'  - \textcolor{orange}{\underline{I}}'' - (\beta_2 - \gamma) \ \textcolor{orange}{\underline{I}} \leq - \beta_2 \  \dfrac{  \textcolor{orange}{\underline{I}}^2}{\textcolor{Cerulean}{\underline{S}} + \textcolor{orange}{\underline{I}}}.   
\end{equation}

With $L_3 = e M  \lambda^*$, and:
\begin{align}
&\textcolor{orange}{\underline{I}} = [ L_3 \ z - L_2 \ \sqrt{z} ] \ e^{- \lambda^* z},\\
&
\textcolor{orange}{\underline{I}}' = [ L_3 - L_2 \ \dfrac{1}{2 \ \sqrt{z}} ] \ e^{-\lambda^* z} - [ L_3 z - L_2 \ \sqrt{z} ] \  \lambda^* \ e^{-\lambda^* z},\\
&
\textcolor{orange}{\underline{I}}'' = [ L_2 \ \dfrac{1}{4 \ z \ \sqrt{z}} ] \ e^{- \lambda^* z} - 2 \ [ L_3 - L_2 \ \dfrac{1}{2 \ \sqrt{z}} ] \ \lambda^* \ e^{ - \lambda^* z} +  [ L_3 z - L_2 \ \sqrt{z} ] \  (\lambda^*)^2 \ e^{ - \lambda^* z}
\end{align}

On the one hand, we obtain the following identities: 
\begin{align}
 -v  \ \textcolor{orange}{\underline{I}}'  - \textcolor{orange}{\underline{I}}'' - (\beta_2 - \gamma) \ \textcolor{orange}{\underline{I}}
&
 =  e^{ - \lambda^* z} \Big[ L_3 \Big( -v + v \  \lambda^* \ z + 2  \ \lambda^* - (\lambda^*)^2 \ z - (\beta_2 - \gamma) z  \Big) \\
&
 \quad  + \ L_2 \Big( v \ \dfrac{1}{2 \ \sqrt{z}} - v \ \sqrt{z} \ \lambda^* -  \dfrac{1}{4 \ z \ \sqrt{z}} - \lambda^* \  \dfrac{1}{ \sqrt{z}} + (\lambda^*)^2 \ \sqrt{z} +  (\beta_2 - \gamma) \ \sqrt{z}   \Big) \Big]  ,\\
& =  e^{- \lambda^* z} \Big[ L_3 \Big( ( 2 \lambda^* - v )  - z \ \Big(  - v \lambda^* + (\lambda^*)^2   + (\beta_2 - \gamma) \Big) \Big)\\
&  \quad + \ L_2 \Big(   \sqrt{z} \ \Big( - v \  \lambda^* + (\lambda^*)^2  +  (\beta_2 - \gamma) \Big) + \dfrac{1}{2 \ \sqrt{z}} (  v - 2 \lambda^*) -  \dfrac{1}{4 \ z \ \sqrt{z}} \Big) \Big]   ,\\
& = - L_2 \  e^{- \lambda^* z} \  \dfrac{1}{4 \ z \ \sqrt{z}} 
\end{align}

On the other hand, we have: \begin{equation}
 - \beta_2 \  \dfrac{  \textcolor{orange}{\underline{I}}^2}{\textcolor{Cerulean}{\underline{S}} + \textcolor{orange}{\underline{I}}} = - \beta_2 \  \dfrac{[ L_3 \ z - L_2 \ \sqrt{z} ]^2 \ e^{ - 2  \lambda^* z} }{ 1 - L_1 e^{- \dfrac{z}{L_1}} + [ L_3 \ z - L_2 \ \sqrt{z} ] \ e^{ - \lambda^* z}}.    
\end{equation}

We resume with the reformulation \eqref{eq:(iv)_reformulated}, which is now equivalent to the following:
\begin{align}
& - L_2 \  e^{-\lambda^* z} \  ( 1 - L_1 e^{- \dfrac{z}{L_1} } + [ L_3 \ z - L_2 \ \sqrt{z} ] \ e^{- \lambda^* z} )    \leq - 4 \beta_2 \ [ L_3 \ z - L_2 \ \sqrt{z} ]^2 \ e^{- 2 \lambda^* z}   \ z \ \sqrt{z}\\
 \iff &     4 \beta_2 \ [ L_3 \ z - L_2 \ \sqrt{z} ]^2 \ e^{ - \lambda^* z}   \ z \ \sqrt{z} -   L_2 \ [ L_3 \  z - L_2 \ \sqrt{z}  ] \  e^{ - \lambda^* z} \leq \ L_2 \   ( 1 - L_1 e^{- \dfrac{z}{L_1}} )    ,\\
 \iff &   4 \beta_2 \ e^{ - \lambda^* z}   \ z^3  \sqrt{z} \  (L_3)^2  +  e^{ - \lambda^* z} \Big( ( 1 - 8 \beta_2  \ z^2)  \ L_3 \  L_2 \ z   + (L_2)^2 \sqrt{z} (1 - 4 \beta_2   \ z^2  )  \Big) \leq \    L_2 \   ( 1 - L_1 e^{- \dfrac{z}{L_1} } ).
\end{align}



We may increase  $L_2$ such that $ 1 - 4 \beta_2   \ \Big( \dfrac{L_2}{e M \lambda^*} \Big)^4  \leq 0$. Then, since $ z > (\dfrac{L_2}{e M \lambda^*})^2$: 

\begin{equation}
    1 - 8 \beta_2  \ z^2 \leq  1 - 4 \beta_2   \ z^2   < 1 - 4 \beta_2   \ \Big( \dfrac{L_2}{e M \lambda^*} \Big) ^4   \leq 0.
\end{equation}

Since $( 1 - 8 \beta_2  \ z^2)$ and $( 1 - 4 \beta_2  \ z^2)$ are negative terms, we need to show $   L_2 \   ( 1 - L_1 e^{- \dfrac{z}{L_1} } )     \geq  4 \beta_2 \ e^{- \lambda^* z}   \ z^3  \sqrt{z} \  (L_3)^2  $.  

Let $g(z) = \beta_2 \ e^{ - \lambda^* z}   \ z^3  \sqrt{z} \  (L_3)^2$ be a $ \mathscr{C}^1([0;- \infty[)$ function. Since $ \lim\limits_{z \rightarrow 0} (g(z)) = 0 $ and  $ \lim\limits_{z \rightarrow + \infty} (g(z)) = 0 $ there exists a constant C (which is independent from $L_2$) such that $g(z) < C \ \ \forall z \geq 0 $. We finally increase $L_2$ so that condition (iv) is verified.
\end{proof}

\subsubsection{Existence and positivity of a critical traveling wave solution}

Now, exactly as in \cite{zhou2019}, we are in a position to define a set of functions
\[
\Gamma = \{(S,I)\in B_\mu(\mathbb R,\mathbb{R}^2)\ |\ \underline{S}\leq S\leq\bar{S},\ \underline{I}\leq I\leq\bar{I}\},
\]
where $B_\mu(\mathbb R,\mathbb{R}^2)$ is the set of two-component continuous functions with each component growing at infinity slower than $e^{\mu|z|}$, as well as an operator $F:\Gamma\to C(\mathbb{R},\mathbb{R}^2)$ that will satisfy the assumptions of the Schauder fixed point theorem and whose fixed point in $\Gamma$ will precisely be the solution $(S,I)$ we seek. Note that the inequalities 1., 2., 3., 4. (beginning of Section \ref{an:sub_super}) are precisely what we use to prove that $F(\Gamma)\subset\Gamma$. Details can be found in \cite{zhou2019}.

The positivity of both $S$ and $I$ comes from the use of the strong maximum principle, again exactly as in \cite{zhou2019}.

\newpage

\section{Study of the
reaction term when $ r = + \infty$ in section \ref{subsec:partial}} \label{ann:partial}

We are searching for conditions implying a pulled monostable wave, using criterion \eqref{eq:pull_cri}. 

\subsection{Conversion occurring in the zygote} \label{ann:pull_zyg}

We rewrite limit equation (\ref{eq:par_zyg_rinf}): \begin{equation}
    \partial_t \p{D}  -  \partial_{xx}^2 \p{D}  \ = \dfrac{ \Big( - \big[  2 (1-c) (1-h) - 1 \big] \ s \  \p{D} - s [ 1-(1-c)(1-h)] + c (1-s)   \Big)  \ (1-\p{D}) \ \p{D} }{ - s  \big[ 2 (1-c) (1-h) - 1 \big] \p{D}^2 - 2 s \big[ 1 - (1-c) (1-h) \big] \p{D} + 1}.
\end{equation}

\vspace{0.2cm}

With $ \mathscr{A}_z := s \  \big[ 2 (1-c) (1-h) - 1 \big] \in [-s, s]$ :


\begin{equation}\label{eq:reac_zyg_p_1}
    \partial_t \p{D}  -  \partial_{xx}^2 \p{D}  \ = \dfrac{ \big(  - \mathscr{A}_z \  \p{D}  + \frac{1}{2} ( \mathscr{A}_z - s) + c (1-s)   \big)  \ (1-\p{D}) \ \p{D} }{-\mathscr{A}_z \p{D}^2 + (\mathscr{A}_z - s) \ \p{D} + 1}.  
\end{equation}

\vspace{0.2cm}

Note that the mean fitness $ \F{}^z(\p{D}) = -\mathscr{A}_z \p{D}^2 + (\mathscr{A}_z - s) \ \p{D} + 1  \in [1-s, 1]$\footnote{$\F{}^{z'}(\p{D}) = \mathscr{A}_z ( 1 - 2 \p{D}) - s \leq 0 $ therefore $\F{}^z(1) \leq \F{}^z(\p{D}) \leq \F{}^z(0)$.}. When $ \mathscr{A}_z \neq 0 $, equation (\ref{eq:reac_zyg_p_1}) can be rewritten:\begin{equation}\label{eq:reac_zyg_p_2}
    \partial_t \p{D}  -  \partial_{xx}^2 \p{D}  \  =  \dfrac{ - \mathscr{A}_z \   (  \p{D} - {\p{D}^*}_z ) \ (1-\p{D}) \ \p{D} }{ -\mathscr{A}_z \p{D}^2 + (\mathscr{A}_z - s)  \ \p{D} + 1}    \quad   \text{with} \quad {\p{D}^*}_z :=  \frac{1}{2}  +  \dfrac{ 2 c \ (1-s) -  s }{ 2 \mathscr{A}_z}.   
\end{equation}

Let us introduce $s_1 := \dfrac{c}{1-h(1-c)}$ and $s_{2,z} := \dfrac{c}{2c + h(1-c)}$.  Note that $ \mathscr{A}_z  > 0 \iff s_1 < s_{2,z}$. We draw the reaction term regarding the sign of $\mathscr{A}_z $ and the $s$ values in Figure \ref{fig:poly_zyg}. 

When $\mathscr{A}_z < 0$ and $ s \in (s_{2,z}, s_1)$, equation (\ref{eq:par_zyg_rinf}) admits two stable steady states (bistability). The final proportion will then strongly depend on the initial condition. On the other hand, when $\mathscr{A}_z > 0$  and $ s \in (s_1, s_{2,z})$, the only possible equilibrium state is a coexistence state: the final proportion $\p{D}$ will be strictly in between $0$ and $1$.

Independently of the sign of $\mathscr{A}_z$, if $s < \min(s_1,s_{2,z})$ the only stable steady state is $\p{D}=1$ meaning that for an initial condition outside of the steady states, we expect that the drive always invades the population. If $s > \max(s_1,s_{2,z})$, the only stable steady state is $\p{D}=0$ meaning that for an initial condition outside of the steady states, we expect that the wild-type always invades the population. \\


\vspace{0.1cm}




\begin{figure}[H]  
        \centering
     \includegraphics[scale = 0.5]{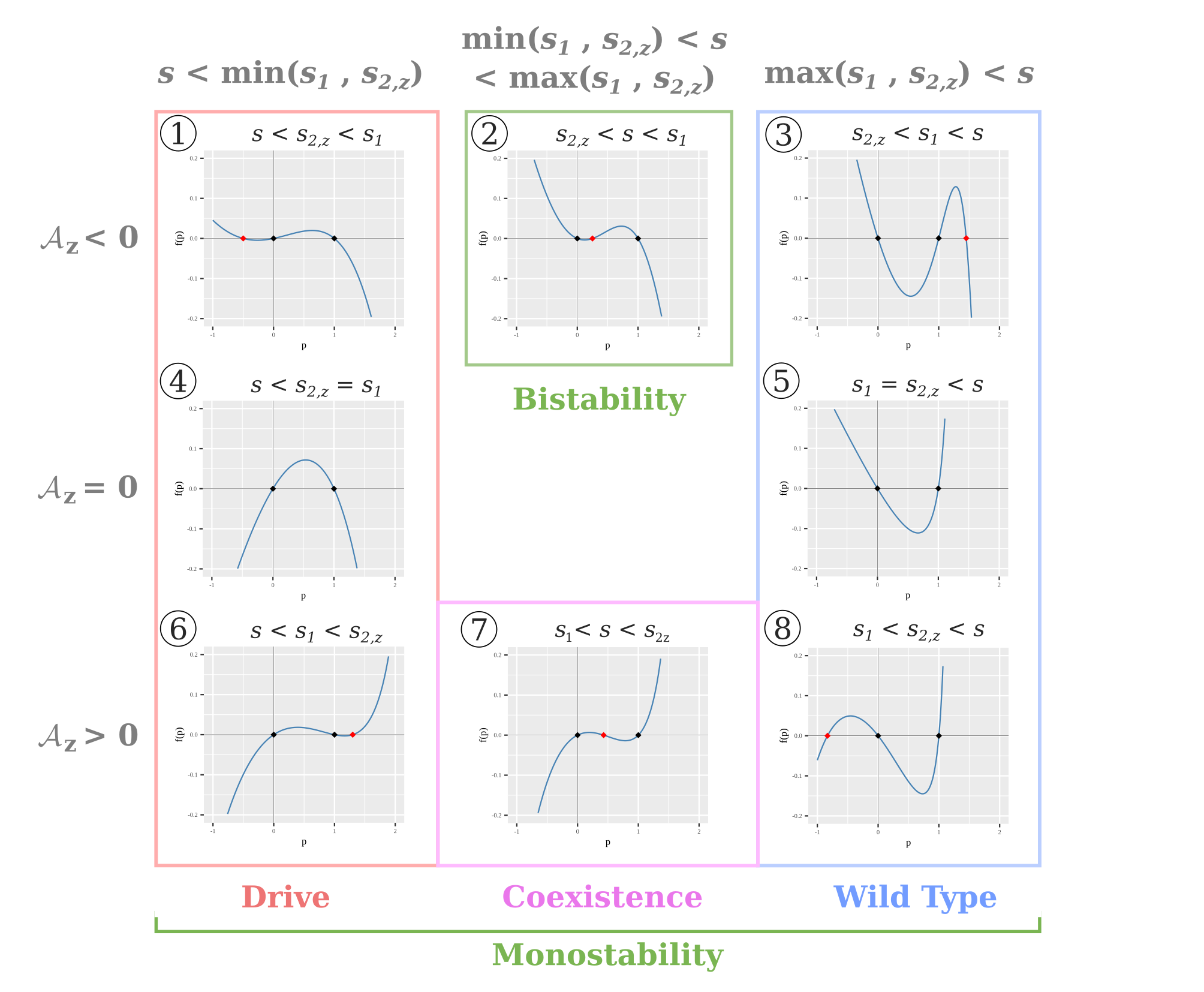}
        \caption{Reaction term of equation \eqref{eq:reac_zyg_p_1}  regarding the sign of $ \mathscr{A}_z = s [2 (1-c)(1-h) - 1]  $ and the $s$ values. The $s$ threshold values are $s_1 = \frac{c}{1-h(1-c)}$ and $s_{2,z} = \frac{c}{2c + h(1-c)}$. The dots on the x axis correspond to the steady states: $\p{D}=0$ and $\p{D}=1$ in black, $ {\p{D}^*}_z = \frac{1}{2}  +  \frac{ 2 c \ (1-s) -  s }{ 2 \mathscr{A}_z} $ in red when it exists.}
        \label{fig:poly_zyg}
\end{figure}

In case of bistability, the wave is always pushed \cite{hadeler1975}; we can dismiss condition \circled{2} in the research of pulled monostable waves. We use criterion \eqref{eq:pull_cri} on monostable cases, i.e. drive invasion \circled{1} \circled{4} \circled{6},  wild type invasion \circled{3} \circled{5} \circled{8}, and coexistence state \circled{7} (the numbers refer to the subgraphs in Figure \ref{fig:poly_zyg}).





\subsubsection{Monostable drive invasion}

\subsubsubsection*{\circled{4}  When $\mathscr{A}_z=0$ and $ s < s_1 = s_{2,z} = \dfrac{2c}{2c + 1} \iff s < 2 c \ (1-s)$}

From equation \eqref{eq:reac_zyg_p_1}, we have for all $\p{D} \in [0,1]$:\begin{equation}
    \sigma(0) - (1-\p{D}) \ \sigma(\p{D}) = \Big( c(1-s) - \dfrac{s}{2}  \Big)  -  \dfrac{ \Big( c(1-s) - \dfrac{s}{2} \Big) \ (1-\p{D})  }{1 - s \p{D}} = \Big( c(1-s) - \dfrac{s}{2} \Big) \  \dfrac{ (1- s) \ \p{D}}{ 1 - s \p{D}} \geq 0,
\end{equation} where $\sigma$ is the selection term defined by equation \eqref{eq:standard_genetics}. Criterion \eqref{eq:pull_cri} is verified.

\vspace{0.6cm}

\subsubsubsection*{ \circled{6} When $\mathscr{A}_z > 0 $ and $ {\p{D}^*}_z > 1 $}

From equation \eqref{eq:reac_zyg_p_2}, we have for all $\p{D} \in [0,1]$: \begin{equation}\label{eq:f0_fp_zyg}
   \sigma(0) - (1-\p{D}) \ \sigma(\p{D})  =  \mathscr{A}_z {\p{D}^*}_z  - \dfrac{ - \mathscr{A}_z \   (  \p{D} - {\p{D}^*}_z ) \ (1-\p{D}) }{ -\mathscr{A}_z \p{D}^2 + (\mathscr{A}_z  - s ) \ \p{D} + 1}  = \mathscr{A}_z \p{D}   \dfrac{ - (\mathscr{A}_z {\p{D}^*}_z + 1) \ \p{D}  + ( \mathscr{A}_z + 1 - s  ) \  {\p{D}^*}_z + 1}{ -\mathscr{A}_z \p{D}^2 + (\mathscr{A}_z  - s ) \  \p{D} + 1} .
\end{equation} 



\vspace{0.2cm}

Note that $ -\mathscr{A}_z \p{D}^2 + (\mathscr{A}_z  - s ) \  \p{D} + 1 > (1-s) > 0$ and $ \mathscr{A}_z  \p{D} > 0 $. The affine term $ - (\mathscr{A}_z {\p{D}^*}_z + 1) \ \p{D}  + ( \mathscr{A}_z + 1 - s  ) \  {\p{D}^*}_z + 1 $ decreases with $\p{D}$. In order to show that it is positive for all $\p{D} \in [0,1]$, we just need to verify that this it is true for $\p{D}=1$:

\begin{equation}
  - (\mathscr{A}_z {\p{D}^*}_z + 1)  + ( \mathscr{A}_z + 1 - s  ) \ {\p{D}^*}_z + 1 =  ( 1-s ) \ {\p{D}^*}_z \geq 0 \quad \Rightarrow     \quad \sigma(0) - (1-\p{D}) \ \sigma(\p{D})  \geq 0 \quad \forall \p{D} \in [0,1].
\end{equation}

Criterion \eqref{eq:pull_cri} is verified.

\subsubsubsection*{ \circled{1}  When $\mathscr{A}_z <0$ and ${\p{D}^*}_z<0$ and $s < s_{2,z} \iff 0 < c - 2 sc - sh + sch   $  }

We consider equation \eqref{eq:f0_fp_zyg} with $-\mathscr{A}_z \p{D}^2 + (\mathscr{A}_z - s ) \  \p{D} + 1 > 1-s > 0 $ and $\mathscr{A}_z \p{D} < 0$.  The affine term $ - (\mathscr{A}_z {\p{D}^*}_z + 1) \ \p{D}  + ( \mathscr{A}_z + 1 - s  ) \  {\p{D}^*}_z + 1$ decreases with $\p{D}$. In order to show that it is negative for all $\p{D} \in [0,1]$, we introduce a condition implying the negativity for $\p{D}=0$:

\begin{equation}
     ( \mathscr{A}_z + 1 - s  ) \  {\p{D}^*}_z + 1  = \dfrac{ ( \mathscr{A}_z + 1 - s  ) \ (\mathscr{A}_z  + 2 c (1- s) -s + 2 \mathscr{A}_z )}{ 2 \mathscr{A}_z}  < 0 
\end{equation} \begin{equation} \label{eq:circled_c_zyg}
     \iff \Big( 1 - 2s [1 - (1-h)(1-c)] \Big) \Big( c - 2 sc - sh + sch \Big) +  s \  \big[ 2 (1-c) (1-h) - 1 \big] > 0
\end{equation}

Criterion \eqref{eq:pull_cri} is verified when condition \eqref{eq:circled_c_zyg} is true.

\subsubsection{Monostable wild-type invasion}

In case of a monostable wild-type invasion, we need to consider the wild-type proportion $\p{W} = 1-\p{D} \in [0,1]$ and rewrite the equation \eqref{eq:reac_zyg_p_1}: \begin{align}\label{eq:reac_zyg_q_1} 
    - \partial_t \p{W}  +  \partial_{xx}^2 \p{W}  \ & = \dfrac{ \big(  - \mathscr{A}_z \  (1-\p{W})  + \frac{1}{2} \ (\mathscr{A}_z - s)  + c (1-s)   \big)  \ (1-\p{W}) \ \p{W} }{-\mathscr{A}_z (1-\p{W})^2 + ( \mathscr{A}_z - s) (1-\p{W}) + 1} \nonumber \\ 
    \iff  \partial_t \p{W}  -  \partial_{xx}^2 \p{W}  \ & = \dfrac{ \big(  - \mathscr{A}_z \  \p{W} + \frac{1}{2} \ (\mathscr{A}_z + s)  - c (1-s)   \big)  \ (1-\p{W}) \ \p{W} }{-\mathscr{A}_z \p{W}^2 + ( \mathscr{A}_z + s) \  \p{W} + (1-s)}
\end{align}

When $ \mathscr{A}_z \neq 0$, equation \eqref{eq:reac_zyg_q_1} can be rewritten:

\begin{equation}\label{eq:reac_zyg_q_2} 
     \partial_t \p{W}  -  \partial_{xx}^2 \p{W}  \ = \dfrac{  - \mathscr{A}_z \  (\p{W} - {\p{W}^*}_z) \ (1-\p{W}) \ \p{W} }{-\mathscr{A}_z \p{W}^2 + ( \mathscr{A}_z + s) \  \p{W} + (1-s)} \quad \text{with} \quad {\p{W}^*}_z  =  \frac{1}{2}  - \dfrac{ 2 c \ (1-s) -  s }{ 2 \mathscr{A}_z} = 1 - {\p{D}^*}_z 
\end{equation}

\subsubsubsection*{ \circled{5}  When $ \mathscr{A}_z = 0$ and  $ s < s_1 = s_{2,z} = \dfrac{2c}{2c + 1} \iff  2 c \ (1-s) < s $} 

From equation \eqref{eq:reac_zyg_q_1} we have for all $\p{W} \in [0,1]$ : \begin{equation}
 \sigma(0) - (1-\p{W}) \ \sigma(\p{W})  = \big(  \dfrac{s}{2}  - c(1-s) \big) \Big(   \dfrac{1 }{  1 - s } - \dfrac{1-\p{W}}{ s \p{W} + 1 - s }  \Big)  \geq   \dfrac{ \p{W} \ \big( \dfrac{s}{2}  - c(1-s) \big) }{ 1 - s } \geq 0
\end{equation}

Criterion \eqref{eq:pull_cri} is verified.


\subsubsubsection*{ \circled{8}  When $ \mathscr{A}_z > 0$ and $ {\p{W}^*}_z = 1 - {\p{D}^*}_z > 1 $}


From equation \eqref{eq:reac_zyg_q_2}, we have for all $\p{W} \in [0,1]$: \begin{equation}\label{eq:g0_gq_zyg}
\begin{aligned}
     \sigma(0) - (1-\p{W}) \ \sigma(\p{W}) & =   \dfrac{ \ \mathscr{A}_z \  {\p{W}^*}_z}{1-s} - \dfrac{ - \mathscr{A}_z \   (  \p{W} - {\p{W}^*}_z) \ (1-\p{W}) }{ -\mathscr{A}_z \p{W}^2 + ( \mathscr{A}_z + s) \  \p{W} + (1-s)} \\ 
     & = \   \mathscr{A}_z \p{W} \ \dfrac{ \  ( - {\p{W}^*}_z+ 1 - s ) \ \p{W}  + {\p{W}^*}_z(\mathscr{A}_z + 1) + ( 1 - s ) }{( 1 - s )(-\mathscr{A}_z \p{W}^2 + ( \mathscr{A}_z + s) \  \p{W} + (1-s) )}.
\end{aligned}
\end{equation} 



\vspace{0.2cm}

Note that $ ( 1 - s )(-\mathscr{A}_z \p{W}^2 + ( \mathscr{A}_z + s) \  \p{W} + (1-s) ) > (1-s)^2 > 0 $ and $ \mathscr{A}_z \p{W}> 0 $.  As $ {\p{W}^*}_z> 1 $, the affine term $( - {\p{W}^*}_z+ 1 - s ) \  \p{W} + {\p{W}^*}_z(\mathscr{A}_z + 1) + ( 1 - s ) $ decreases with $\p{W}$. In order to show that it is positive
for all $\p{W} \in [0, 1]$, we just need to verify that this it is true for $\p{W} = 1$:

\begin{equation}
    ( - {\p{W}^*}_z+ 1 - s )  + {\p{W}^*}_z(\mathscr{A}_z + 1) + ( 1 - s ) =  2 \ (  1 - s )  + \mathscr{A}_z  {\p{W}^*}_z  \geq 0 \quad \Rightarrow     \quad \sigma(0) - (1-\p{W}) \ \sigma(\p{W})   \geq 0 \quad \forall \p{W} \in [0,1].
\end{equation}

Criterion \eqref{eq:pull_cri} is verified.

\subsubsubsection*{ \circled{3}  When $\mathscr{A}_z <0$ and ${\p{W}^*}_z = 1 - {\p{D}^*}_z < 0 $}

We consider equation \eqref{eq:g0_gq_zyg} with $ ( 1 - s )(-\mathscr{A}_z \p{W}^2 + ( \mathscr{A}_z + s) \  \p{W} + (1-s) ) > (1-s)^2 > 0 $ and $ \mathscr{A}_z \p{W} < 0 $. The affine term $( - {\p{W}^*}_z + 1 - s ) \  \p{W} + {\p{W}^*}_z (\mathscr{A}_z + 1) + ( 1 - s ) $ is strictly positive for $\p{W}=1$, therefore criterion \eqref{eq:pull_cri} is not verified.

\subsubsection{Monostable coexistence state}

\subsubsubsection*{ \circled{7}  When $\mathscr{A}_z > 0 $ and $ 0 < {\p{D}^*}_z = 1 - {\p{W}^*}_z < 1 $  }

In the coexistence case, we have to verify that both waves, the drive invasion wave going to the right and the wild-type invasion wave going to the left, are pulled waves (see Figure \ref{fig:coex_illu}).

For the drive invasion wave we consider equation \eqref{eq:f0_fp_zyg} with $ 0 < {\p{D}^*}_z < 1 $ and $\p{D}  \in [0,{\p{D}^*}_z]$ (the term drive wave implies that the proportion of wild type increases after the wave passes; therefore the global stable steady state ${\p{D}^*}_z$ is also the maximum proportion). Once again, we need to prove that the affine term  $ - (\mathscr{A}_z {\p{D}^*}_z + 1) \ \p{D}  + ( \mathscr{A}_z + 1 - s  ) \  {\p{D}^*}_z + 1 $ is positive. As it decreases with $\p{D}  \in [0,{\p{D}^*}_z]$, we determine its sign for $\p{D} = {\p{D}^*}_z$:
\begin{equation}
\begin{aligned}
   - (\mathscr{A}_z {\p{D}^*}_z + 1) \ {\p{D}^*}_z  + ( \mathscr{A}_z + 1 - s  ) \  {\p{D}^*}_z + 1  & =   - \mathscr{A}_z \ ({\p{D}^*}_z)^2   + ( \mathscr{A}_z - s  ) \  {\p{D}^*}_z + 1  \geq 1-s  \geq 0  \\
   & \Rightarrow \ \sigma(0) - (1-\p{D}) \sigma(\p{D})   \geq 0  \quad \forall \p{D}  \in [0,{\p{D}^*}_z].
\end{aligned}
\end{equation} Criterion \eqref{eq:pull_cri} is verified for the drive wave.

For the wild-type invasion wave, we consider equation \eqref{eq:g0_gq_zyg} with $ 0 < {\p{W}^*}_z = 1 - {\p{D}^*}_z < 1 $ and $\p{W}  \in [0,{\p{W}^*}_z]$ (the term wild-type wave implies that the proportion of wild type increases after the wave passes; therefore the global stable steady state ${\p{W}^*}_z$ is also the maximum proportion). Once again, we need to prove that the affine term $( - {\p{W}^*}_z + 1 - s ) \  \p{W} + {\p{W}^*}_z (\mathscr{A}_z + 1) + ( 1 - s )$ is positive. As it decreases with $\p{W}  \in [0,{\p{W}^*}_z]$, we determine its sign for $\p{W} = {\p{W}^*}_z$: \begin{equation}
\begin{aligned}
   & ( - {\p{W}^*}_z + 1 - s ) \  {\p{W}^*}_z + (\mathscr{A}_z + 1) \ {\p{W}^*}_z  + ( 1 - s )  = - ( {\p{W}^*}_z )^2 + (\mathscr{A}_z + 2 - s) \ {\p{W}^*}_z + 1 - s \\ 
   & \geq \min(1, \mathscr{A}_z + 2 ( 1- s))  \geq 0  \quad \Rightarrow  \quad \sigma(0) - (1-\p{W}) \sigma(\p{W})  \geq 0  \quad \forall \p{W}  \in [0,{\p{W}^*}_z].    
\end{aligned}
\end{equation} Criterion \eqref{eq:pull_cri} is verified for the wild-type wave.


\subsection{Conversion occurring in the germline}\label{ann:pull_ger}

We rewrite limit equation (\ref{eq:par_ger_rinf}): \begin{equation}
    \partial_t \p{D}  -  \partial_{xx}^2 \p{D}  \ = \dfrac{ \Big(  - (1-2h) \  s \ \p{D} + [ (1-sh) (1+c) - 1 ]  \Big) \ \p{D} \ (1-\p{D}) }{- s (1-2h) \p{D}^2 - 2 s h \p{D} + 1}
\end{equation}

With $ \mathscr{A}_g := s \ (1-2h) \in [ -s, s ]$  :

\begin{equation}\label{eq:reac_ger_p_1}
    \partial_t \p{D}  -  \partial_{xx}^2 \p{D}   \  = \dfrac{ \big(  - \mathscr{A}_g  \ \p{D} + \frac{1}{2} ( \mathscr{A}_g  - s ) + c (1-sh)  \big) \ \p{D} \ (1-\p{D}) }{- \mathscr{A}_g \p{D}^2 + (\mathscr{A}_g - s) \  \p{D} + 1}.
\end{equation}

Note that the mean fitness $ \F{}^g(\p{D}) = -\mathscr{A}_g \p{D}^2 + (\mathscr{A}_g - s) \ \p{D} + 1  \in [1-s, 1]$\footnote{$\F{}^{g'}(\p{D}) = \mathscr{A}_g ( 1 - 2 \p{D}) - s \leq 0 $ therefore $\F{}^g(1) \leq \F{}^g(\p{D}) \leq \F{}^g(0)$.}.When $\mathscr{A}_g \neq 0$, equation (\ref{eq:reac_ger_p_1}) can be rewritten: 

\begin{equation}\label{eq:reac_ger_p_2}
      \partial_t \p{D}  -  \partial_{xx}^2 \p{D}  \ = \dfrac{- \mathscr{A}_g   \ ( \p{D} -  {\p{D}^*}_g  \big)  \ (1-\p{D}) \ \p{D}}{- \mathscr{A}_g \p{D}^2 + (\mathscr{A}_g - s) \  \p{D} + 1} = f_g(\p{D})  \quad \text{with} \quad {\p{D}^*}_g := \dfrac{1}{2} + \dfrac{2 c \ (1- sh) - s }{ 2 \ \mathscr{A}_g}.
\end{equation}

Let us introduce $s_1 := \dfrac{c}{1-h(1-c)}$ and $s_{2,g} := \dfrac{c}{2ch + h(1-c)} = \dfrac{ c }{h  (1 + c) }$. We draw the reaction term regarding the sign of $  \mathscr{A}_g $ and the $s$ values (in Figure \ref{fig:poly_ger}).  

\begin{figure}[H]  
        \centering
     \includegraphics[scale = 0.5]{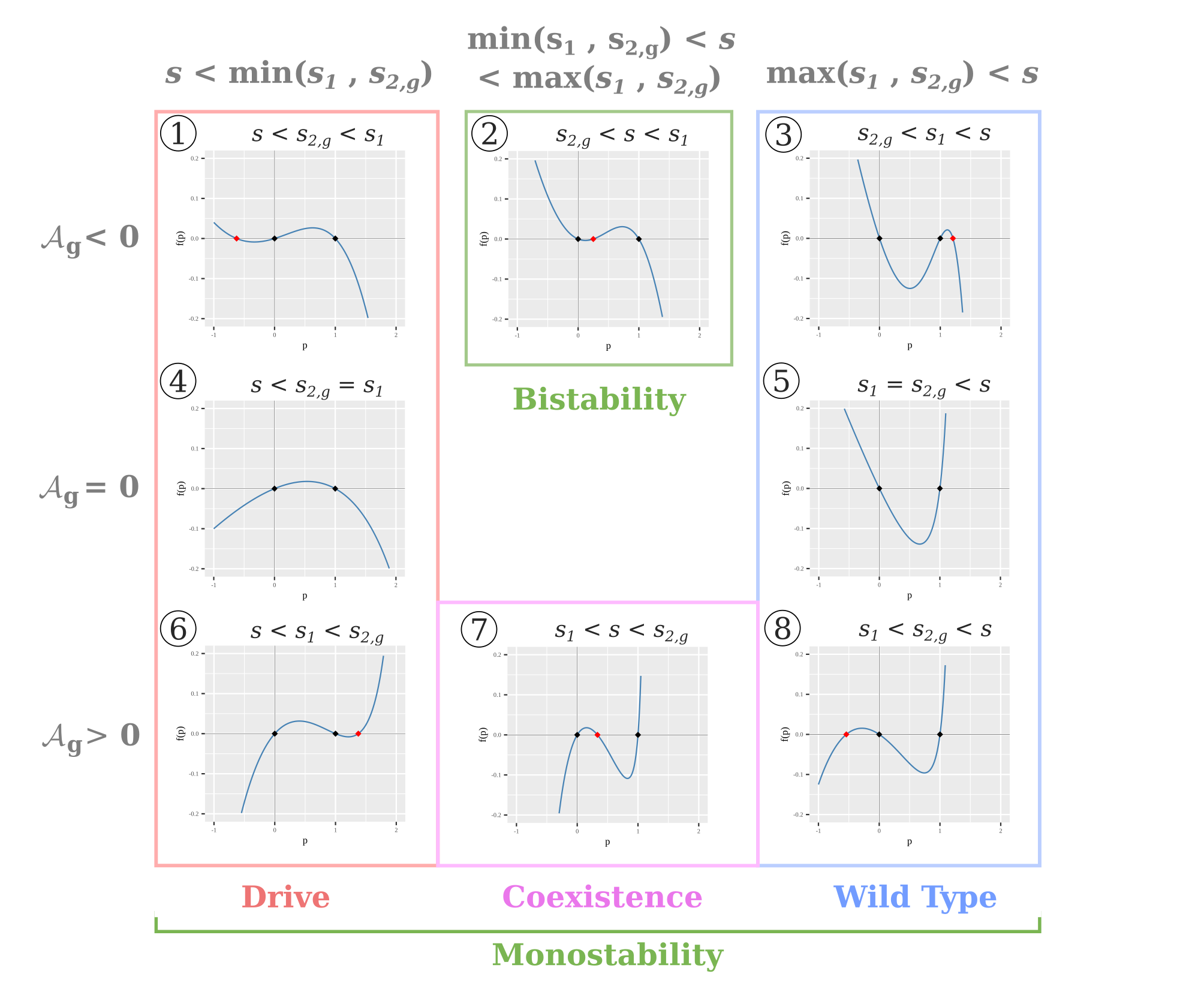}
        \caption{Reaction term of equation \eqref{eq:reac_ger_p_1} regarding the sign of $ \mathscr{A}_g  = s \ (1-2h) $ and the $s$ values. The $s$ threshold values are $s_1 = \frac{c}{1-h(1-c)}$ and $s_{2,g} = \frac{c}{2ch + h(1-c)} = \frac{  c }{ h (1 + c)}$. The dots on the x axis correspond to the steady states: $\p{D}=0$ and $\p{D}=1$ in black, $ {\p{D}^*}_g = \frac{1}{2} + \frac{2 c \ (1- sh) - s }{ 2 \ \mathscr{A}_g} $ in red when it exists.}
        \label{fig:poly_ger}
\end{figure}

In case of bistability, the wave is always pushed \cite{hadeler1975}; we can dismiss condition \circled{2} in the research of pulled monostable waves. We use criterion \eqref{eq:pull_cri} on monostable cases, i.e. drive invasion \circled{1} \circled{4} \circled{6},  wild type invasion \circled{3} \circled{5} \circled{8}, and coexistence state \circled{7} (the numbers refer to the subgraphs in Figure \ref{fig:poly_ger}).


\subsubsection{Monostable drive invasion}

\subsubsubsection*{ \circled{4}  When $\mathscr{A}_g=0 \iff h = \dfrac{1}{2}$ and $ s < s_1 = s_{2,g} = \dfrac{2 c }{c + 1} \iff   \dfrac{s}{2} \ (c + 1) <  c $}

\vspace{0.2cm}

From equation \eqref{eq:reac_ger_p_1}, we have for all $ \p{D} \in [0,1]$: \begin{equation}
   \sigma(0) - (1-\p{D}) \sigma(p) = \Big( c - \dfrac{s}{2} \ (c+1) \Big) - \dfrac{ (c - \dfrac{s}{2} \ (c+1) ) \ (1-\p{D}) }{1  - s \p{D}}   = \Big( c - \dfrac{s}{2} \ (c+1) \Big) \  \dfrac{ (1- s) \ \p{D}  }{ 1 - s \p{D}} \geq 0,
\end{equation}

where $\sigma$ is the selection term defined by equation \eqref{eq:standard_genetics}. Criterion \eqref{eq:pull_cri} is verified.

\subsubsubsection*{ \circled{6}  When $\mathscr{A}_g > 0 $ and $ {\p{D}^*}_g > 1 $}

From equation \eqref{eq:reac_ger_p_2}, we have for all $ \p{D} \in [0,1]$: \begin{equation}\label{eq:f0_fp_ger}
     \sigma(0) - (1-\p{D}) \sigma(p) =  \mathscr{A}_g {\p{D}^*}_g  - \dfrac{ - \mathscr{A}_g \   (  \p{D} - {\p{D}^*}_g ) \ (1-\p{D}) }{ -\mathscr{A}_g \p{D}^2 + (\mathscr{A}_g  - s ) \ \p{D} + 1}  = \mathscr{A}_g \p{D}   \dfrac{ - (\mathscr{A}_g {\p{D}^*}_g + 1) \ \p{D}  + ( \mathscr{A}_g + 1 - s  ) \  {\p{D}^*}_g + 1}{ -\mathscr{A}_g \p{D}^2 + (\mathscr{A}_g - s ) \  \p{D} + 1} .
\end{equation}



Note that $-\mathscr{A}_g \p{D}^2 + (\mathscr{A}_g  - s ) \p{D} + 1 > (1-s) > 0$ and $ \mathscr{A}_g  \p{D} > 0 $. The affine term $- (\mathscr{A}_g {\p{D}^*}_g + 1) \ \p{D}  + ( \mathscr{A}_g + 1 - s ) \  {\p{D}^*}_g + 1$ decreases with $\p{D}$. In order to show that it is positive for all $\p{D} \in [0,1]$, we just need to verify that this it is true for $\p{D}=1$: \begin{equation}
  - (\mathscr{A}_g {\p{D}^*}_g + 1)  + ( \mathscr{A}_g + 1 - s  ) \ {\p{D}^*}_g + 1 =  ( 1-s ) \ {\p{D}^*}_g \geq 0 \quad \Rightarrow     \quad  \sigma(0) - (1-\p{D}) \sigma(p) \geq 0 \quad \forall \p{D} \in [0,1].
\end{equation}

Criterion \eqref{eq:pull_cri} is verified.



\subsubsubsection*{ \circled{1}  When $\mathscr{A}_g <0$ and ${\p{D}^*}_g <0$ and $s < s_{2,g} \iff 0 < c - s h (1 + c )$ }

We consider equation \eqref{eq:f0_fp_ger} with $-\mathscr{A}_g \p{D}^2 + (\mathscr{A}_g - s ) \  \p{D} + 1 > 1-s > 0 $ and $\mathscr{A}_g \p{D} < 0$.  The affine term $ - (\mathscr{A}_g {\p{D}^*}_g + 1) \ \p{D}  + ( \mathscr{A}_g + 1 - s  ) \  {\p{D}^*}_g + 1$ decreases with $\p{D}$. In order to show that it is negative for all $\p{D} \in [0,1]$, we introduce a condition implying the negativity for $p=0$:
\begin{equation}
     ( \mathscr{A}_g + 1 - s  ) \  {\p{D}^*}_g + 1  = \dfrac{(1-2sh) (\mathscr{A}_g  + 2 c (1- sh)-s) + 2 \mathscr{A}_g }{ 2 \mathscr{A}_g}  < 0 
\end{equation} \begin{equation}\label{eq:circled_c_ger}
     \iff ( 1 - 2 sh) (c - sh ( c + 1 ) )  + s (1 - 2h)  > 0 
\end{equation}

Criterion \eqref{eq:pull_cri} is verified when condition \eqref{eq:circled_c_ger} is true.

\subsubsection{Monostable wild-type invasion}

In case of a monostable wild-type invasion, we need to consider the wild-type proportion $\p{W} = 1-\p{D} \in [0,1]$ and rewrite the equation \eqref{eq:reac_ger_p_1}: \begin{align}\label{eq:reac_ger_q_1} 
    - \partial_t \p{W}  +  \partial_{xx}^2 \p{W}  \ & = \dfrac{ \big(  - \mathscr{A}_g \  (1-\p{W})  + \frac{1}{2} \ (\mathscr{A}_g - s)  + c (1-sh)   \big)  \ (1-\p{W}) \ \p{W} }{-\mathscr{A}_g (1-\p{W})^2 + ( \mathscr{A}_g - s) (1-\p{W}) + 1} \nonumber \\ 
    \iff  \partial_t \p{W}  -  \partial_{xx}^2 \p{W}  \ & = \dfrac{ \big(  - \mathscr{A}_g \  \p{W} + \frac{1}{2} \ (\mathscr{A}_g + s)  - c (1-sh)   \big)  \ (1-\p{W}) \ \p{W} }{-\mathscr{A}_g \p{W}^2 + ( \mathscr{A}_g + s) \  \p{W} + (1-s)} 
\end{align}

When $ \mathscr{A}_g \neq 0$, equation \eqref{eq:reac_ger_q_1} can be rewritten:

\begin{equation}\label{eq:reac_ger_q_2} 
     \partial_t \p{W}  -  \partial_{xx}^2 \p{W}  \ = \dfrac{  - \mathscr{A}_g \  (\p{W} - {\p{W}^*}_g) \ (1-\p{W}) \ \p{W} }{-\mathscr{A}_g \p{W}^2 + ( \mathscr{A}_g + s) \  \p{W} + (1-s)}  \quad \text{with} \quad {\p{W}^*}_g  =  \frac{1}{2}  - \dfrac{ 2 c \ (1-sh) -  s }{ 2 \mathscr{A}_g} = 1 - {\p{D}^*}_g 
\end{equation}

\vspace{0.6cm}

\subsubsubsection*{ \circled{5}  When $ \mathscr{A}_g = 0 \iff h=\dfrac{1}{2}$ and  $ s_1 = s_{2,g} = \dfrac{2c}{c + 1} < s \iff  c < \dfrac{s}{2} \ (c+1) $} 

From equation \eqref{eq:reac_ger_q_1} we have for all $\p{W} \in [0,1]$ :\begin{equation}
     \sigma(0) - (1-\p{W}) \sigma(\p{W}) = \big(  \dfrac{s}{2} \ (c+1) - c \big) \Big( \dfrac{1}{  1 - s } - \dfrac{1-\p{W}}{ s \p{W} + 1 - s } \Big) \geq \dfrac{ \p{W} \ \big( \dfrac{s}{2} \ (c+1) - c  \big) }{ 1 - s } \geq 0
\end{equation}

Criterion \eqref{eq:pull_cri} is verified.


\subsubsubsection*{ \circled{8}  When $ \mathscr{A}_g > 0$ and $ {\p{W}^*}_g = 1 - {\p{D}^*}_g > 1 $}


From equation \eqref{eq:reac_ger_q_2}, we have  for all $\p{W} \in [0,1]$:\begin{equation}\label{eq:g0_gq_ger}
\begin{aligned}
    \sigma(0) - (1-\p{W}) \sigma(\p{W}) & =  \dfrac{ \ \mathscr{A}_g \  {\p{W}^*}_g}{1-s} - \dfrac{ - \mathscr{A}_g \   (  \p{W} - {\p{W}^*}_g ) \ (1-\p{W}) }{ -\mathscr{A}_g \p{W}^2 + ( \mathscr{A}_g + s) \  \p{W} + (1-s)} \\
    & = \   \mathscr{A}_g \p{W} \ \dfrac{ \  ( - {\p{W}^*}_g + 1 - s ) \ \p{W}  + {\p{W}^*}_g (\mathscr{A}_g + 1) + ( 1 - s ) }{( 1 - s )(-\mathscr{A}_g \p{W}^2 + ( \mathscr{A}_g + s) \  \p{W} + (1-s) )}.
\end{aligned}
\end{equation} 



\vspace{0.2cm}

Note that $ ( 1 - s )(-\mathscr{A}_g \p{W}^2 + ( \mathscr{A}_g + s) \  \p{W} + (1-s) ) > (1-s)^2 > 0 $ and $ \mathscr{A}_g \p{W}> 0 $. As $ {\p{W}^*}_g > 1 $, the affine term $( - {\p{W}^*}_g + 1 - s ) \  \p{W} + {\p{W}^*}_g (\mathscr{A}_g + 1) + ( 1 - s ) $ decreases with $\p{W}$. In order to show that it is positive
for all $\p{W} \in [0, 1]$, we just need to verify that this it is true for $\p{W} = 1$: \begin{equation}
    ( - {\p{W}^*}_g + 1 - s )  + {\p{W}^*}_g (\mathscr{A}_g + 1) + ( 1 - s ) =  2 \ (  1 - s )  + \mathscr{A}_g  {\p{W}^*}_g  \geq  0 \quad \Rightarrow        \sigma(0) - (1-\p{W}) \sigma(\p{W}) \geq  0 \quad \forall \p{W} \in [0, 1].
\end{equation}

Criterion \eqref{eq:pull_cri} is verified.

\subsubsubsection*{ \circled{3}  When $\mathscr{A}_g <0$ and  ${\p{W}^*}_g = 1 - {\p{D}^*}_z < 0$ }

We consider equation \eqref{eq:g0_gq_ger} with $ ( 1 - s )(-\mathscr{A}_g \p{W}^2 + ( \mathscr{A}_g + s) \  \p{W} + (1-s) ) > (1-s)^2 > 0 $ and $ \mathscr{A}_g \p{W}< 0 $. The affine term  $( - {\p{W}^*}_g + 1 - s ) \  \p{W} + {\p{W}^*}_g (\mathscr{A}_g + 1) + ( 1 - s ) $ is strictly positive for $\p{W}=1$, therefore criterion \eqref{eq:pull_cri} is not verified.

\subsubsection{Monostable coexistence state}

\subsubsubsection*{ \circled{7}  When $\mathscr{A}_g > 0 $ and $ 0 < {\p{D}^*}_g = 1 - {\p{W}^*}_g < 1 $  }

In the coexistence case, we have to verify that both sub-traveling waves, the drive invasion wave going to the right and the wild-type invasion wave going to the left, are pulled waves (see Figure \ref{fig:coex_illu}).

For the drive invasion wave we consider equation \eqref{eq:f0_fp_ger} with $ 0 < {\p{D}^*}_g < 1 $ and $\p{D}  \in [0,{\p{D}^*}_g]$ (the term drive wave implies that the proportion of wild type increases after the wave passes; therefore the global stable steady state ${\p{D}^*}_g$ is also the maximum proportion). Once again, we need to prove that the affine term  $ - (\mathscr{A}_g {\p{D}^*}_g + 1) \ \p{D}  + ( \mathscr{A}_g + 1 - s  ) \  {\p{D}^*}_g + 1 $ is positive. As it decreases with $\p{D}  \in [0,{\p{D}^*}_g]$, we determine its sign for $\p{D} = {\p{D}^*}_g$:
\begin{equation}
\begin{aligned}
   - (\mathscr{A}_g {\p{D}^*}_g + 1) \ {\p{D}^*}_g  + ( \mathscr{A}_g + 1 - s  ) \  {\p{D}^*}_g + 1  & =   - \mathscr{A}_g \ ({\p{D}^*}_g)^2   + ( \mathscr{A}_g - s  ) \  {\p{D}^*}_g + 1 \geq  1-s \geq  0  \\
   & \Rightarrow   \quad \sigma(0) - (1-\p{D}) \sigma(\p{D}) \geq  0 \quad \forall \p{D}  \in [0,{\p{D}^*}_g].
   \end{aligned}
\end{equation} Criterion \eqref{eq:pull_cri} is verified for the drive wave.

For the wild-type invasion wave, we consider equation \eqref{eq:g0_gq_ger} with $ 0 < {\p{W}^*}_g = 1 - {\p{D}^*}_g < 1 $ and $\p{W}  \in [0,{\p{W}^*}_g]$ (the term wild-type wave implies that the proportion of wild type increases after the wave passes; therefore the global stable steady state ${\p{W}^*}_g$ is also the maximum proportion). Once again, we need to prove that the affine term $( - {\p{W}^*}_g + 1 - s ) \  \p{W} + {\p{W}^*}_g (\mathscr{A}_g + 1) + ( 1 - s )$ is positive. As it decreases with $\p{W}  \in [0,{\p{W}^*}_g]$, we determine its sign for $\p{W} = {\p{W}^*}_g$: \begin{equation}
\begin{aligned}
  &   ( - {\p{W}^*}_g + 1 - s ) \  {\p{W}^*}_g + (\mathscr{A}_g + 1) \ {\p{W}^*}_g  + ( 1 - s ) = - ( {\p{W}^*}_g )^2 + (\mathscr{A}_g + 2 - s) \ {\p{W}^*}_g + 1 - s \\
  &  \geq \min(1, \mathscr{A}_g + 2 ( 1- s)) \geq  0   \quad  \Rightarrow   \quad \sigma(0) - (1-\p{W}) \sigma(\p{W}) \geq  0 \quad \forall \p{W}  \in [0,{\p{W}^*}_g].    
\end{aligned}
\end{equation}Criterion \eqref{eq:pull_cri} is verified for the wild-type wave.

\newpage

\section{Heatmap supplementary materials}

\subsection{Effect of fitness disadvantage ($s$) and dominance coefficient ($h$) on drive dynamics, for $ r = + \infty$.} \label{ann:rode_debarre}

In Figure \ref{ann:rode_zyg} and \ref{ann:rode_ger}, we compute heatmaps indicating the stability regime of systems \eqref{eq:par_zyg} and \eqref{eq:par_ger} when $r=+\infty$, depending on the values of ($h$, $s$) and for a fixed value of $c$.

\begin{figure}[H]
\centering
\begin{subfigure}{0.48\textwidth}
    \centering
     \caption{$c=0.25$}
    \includegraphics[scale = 0.5]{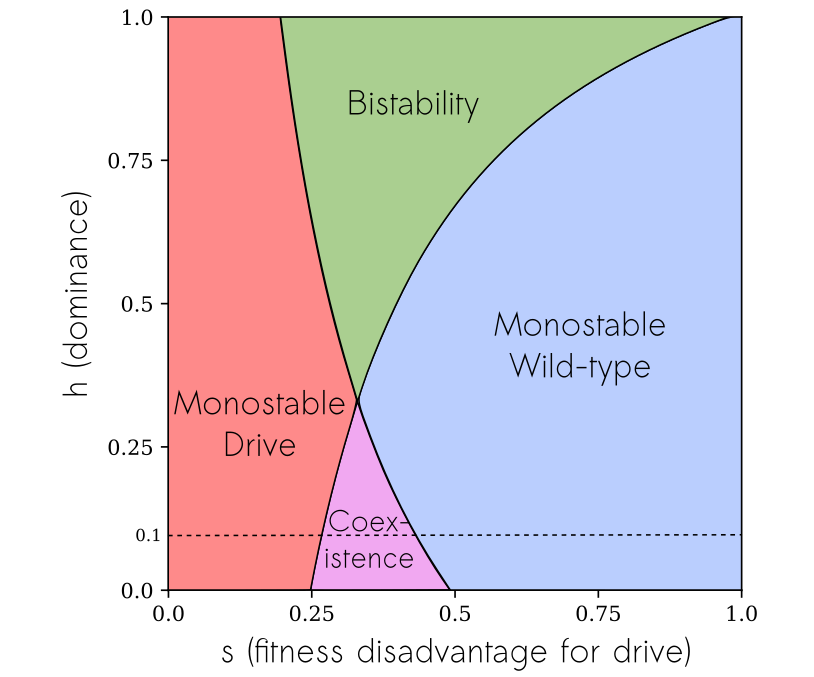}
     \label{ann:rode_zyg_c25}
\end{subfigure}
\hfill
\begin{subfigure}{0.48\textwidth}
    \centering
     \caption{$c=0.75$}
    \includegraphics[scale = 0.5]{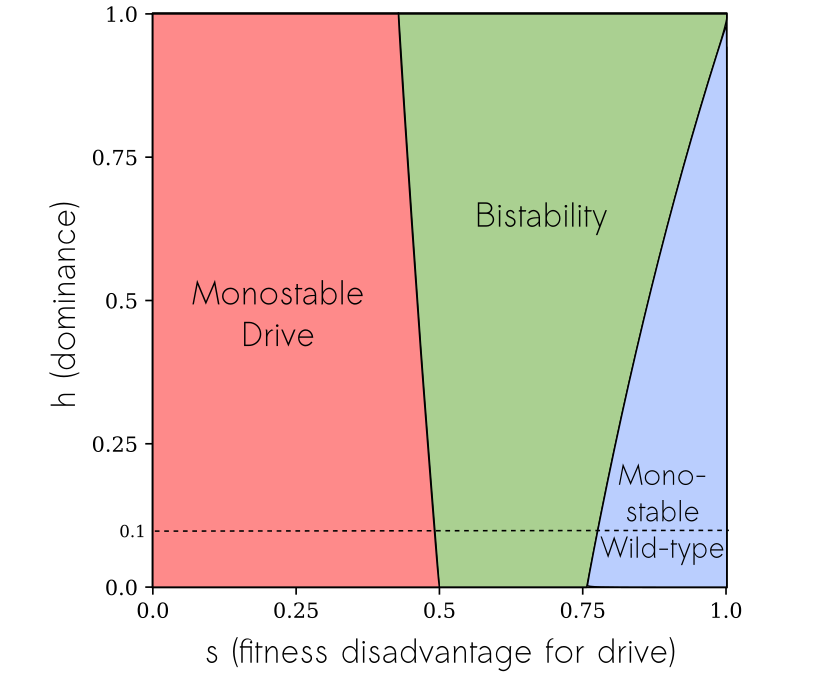}
     \label{ann:rode_zyg_c75}
\end{subfigure}
\caption{Effect of fitness disadvantage ($s$) and dominance coefficient ($h$) on drive dynamics for system \eqref{eq:par_zyg} (when conversion occurs in the zygote) when $ r = + \infty$. Parameters for Figure \ref{fig:heatmap_zyg_coex} ($c=0.25$ and $h=0.1$) and Figure \ref{fig:heatmap_zyg_bist} ($c=0.75$ and $h=0.1$) are materialized by dotted lines.}
\label{ann:rode_zyg}
\end{figure}

\begin{figure}[H]
    \begin{subfigure}{\textwidth}
    \centering
     \caption{$c=0.25$}
    \includegraphics[scale = 0.5]{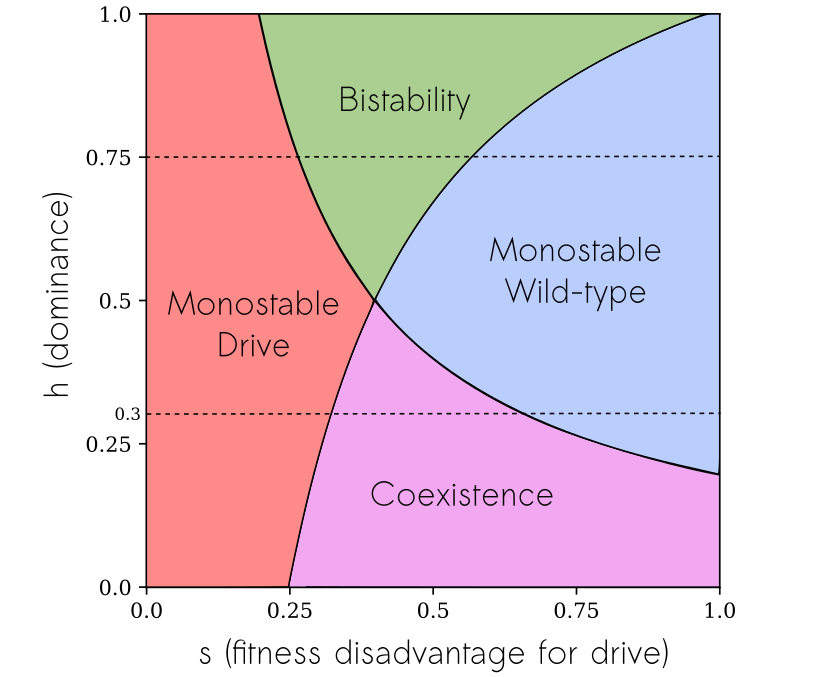}
     \label{ann:rode_ger_c25}
    \end{subfigure}
    \caption{Effect of fitness disadvantage ($s$) and dominance coefficient ($h$) on drive dynamics for system \eqref{eq:par_ger} (when conversion occurs in the germline) when $ r = + \infty$. Parameters for Figure \ref{fig:heatmap_ger_coex} ($c=0.25$ and $h=0.3$) and Figure \ref{fig:heatmap_ger_bist} ($c=0.25$ and $h=0.75$) are materialized by dotted lines.}
    \label{ann:rode_ger}
\end{figure}

Such a figure has already been computed in \cite{rode2019} for $c=0.85$, and conversion occurring in the germline.

\subsection{Heatmap lines}\label{ann:persistence}

\subsubsection{Pure drive line}\label{an:pure_line}

Consider model \eqref{eq:per_zyg} with $\n{WW} = 0$. A well-mixed population containing only drive homozygous individuals will persist in the environment if its equilibrium state $\n{DD}^*$ is strictly positive, i.e. if: 
\begin{equation}
    \n{DD}^* = \min \Big( 0,  1 - \frac{s}{r \ (1-s)} \Big) > 0 \quad \iff \quad r > \dfrac{s}{1-s}
\end{equation}

In case of partial conversion, calculations give the same threshold (consider models \eqref{eq:par_zyg} and \eqref{eq:par_ger} with $\n{DW} = 0$ and $\n{WW} = 0$).

\subsubsection{Composite persistence line} \label{an:composite_line}

Similarly, in case of coexistence, a well-mixed population will persist in the environment only if its equilibrium state $n^*$ is strictly positive. Using Mathematica, we compute this population density equilibrium when conversion occurs in the zygote ($n_z^*$) or in the germline ($n_g^*$) based on systems \eqref{eq:par_zyg_p} and \eqref{eq:par_ger_p}. We obtain the following:

\begin{equation}
n_z^* = \min \Big( 0, 1 - \dfrac{1 -  \F{}^z({\p{D}^*}_z)}{ r \F{}^z({\p{D}^*}_z)} \Big) \quad \text{and} \quad n_g^* = \min \Big(0, 1 - \dfrac{1 -  \F{}^g({\p{D}^*}_g)}{ r \F{}^g({\p{D}^*}_g)} \Big),
\end{equation} 

where the mean fitness $\F{}^z$ and $\F{}^g$ (already defined in Appendix \ref{ann:partial}) are given by:
\begin{equation}
\F{}^z(\p{D}) = -\mathscr{A}_z \p{D}^2 + (\mathscr{A}_z - s) \ \p{D} + 1  \quad \text{and} \quad   \F{}^g(\p{D}) = -\mathscr{A}_g \p{D}^2 + (\mathscr{A}_g - s) \ \p{D} + 1,
\end{equation} 

and the proportions ${\p{D}^*}_z$ and ${\p{D}^*}_g$ (already defined in Appendix \ref{ann:partial}) are given by:   

\begin{equation}
    {\p{D}^*}_z =  \frac{1}{2}  +  \dfrac{ 2 c \ (1-s) -  s }{ 2 \mathscr{A}_z} \quad \text{and} \quad {\p{D}^*}_g := \dfrac{1}{2} + \dfrac{2 c \ (1- sh) - s }{ 2 \ \mathscr{A}_g} 
\end{equation}

with,  
\begin{equation}
\mathscr{A}_z = s \ \big[ 2 (1-c) (1-h) - 1 \big]  \quad \text{and} \quad \mathscr{A}_g = s \ (1-2h).
\end{equation}


Finally, the threshold values for $r$ are given by: \begin{equation}
    n_z^* >0 \quad \iff \quad r > \dfrac{1-\F{}^z({\p{D}^*}_z)}{\F{}^z({\p{D}^*}_z)}
\end{equation}

when conversion occurs in the zygote and,

\begin{equation}
    n_g^* >0 \quad \iff \quad r > \dfrac{1-\F{}^g({\p{D}^*}_g)}{\F{}^g({\p{D}^*}_g)} 
\end{equation}

when conversion occurs in the germline.

\newpage

\subsection*{Conflict of interest} 

The authors declare that they have no conflict of
interest.

\printbibliography

@article{alphey2020,
  title = {Standardizing the Definition of Gene Drive},
  author = {Alphey, Luke S. and Crisanti, Andrea and Randazzo, Filippo (Fil) and Akbari, Omar S.},
  date = {2020-12-08},
  journaltitle = {Proceedings of the National Academy of Sciences},
  volume = {117},
  number = {49},
  pages = {30864--30867},
  publisher = {{Proceedings of the National Academy of Sciences}},
  doi = {10.1073/pnas.2020417117},
  url = {https://www.pnas.org/doi/full/10.1073/pnas.2020417117},
  urldate = {2022-04-04},
  file = {/home/lena/snap/zotero-snap/common/Zotero/storage/ZVFKMAEN/Alphey et al. - 2020 - Standardizing the definition of gene drive.pdf}
}

@misc{an2021,
  title = {Pushed, Pulled and Pushmi-Pullyu Fronts of the {{Burgers-FKPP}} Equation},
  author = {An, Jing and Henderson, Christopher and Ryzhik, Lenya},
  date = {2021-08-17},
  number = {arXiv:2108.07861},
  eprint = {2108.07861},
  eprinttype = {arxiv},
  primaryclass = {math},
  publisher = {{arXiv}},
  doi = {10.48550/arXiv.2108.07861},
  url = {http://arxiv.org/abs/2108.07861},
  urldate = {2022-09-30},
  abstract = {We consider the long time behavior of the solutions to the Burgers-FKPP equation with advection of a strength \$\textbackslash beta\textbackslash in\textbackslash mathbb\{R\}\$. This equation exhibits a transition from pulled to pushed front behavior at \$\textbackslash beta\_c=2\$. We prove convergence of the solutions to a traveling wave in a reference frame centered at a position \$m\_\textbackslash beta(t)\$ and study the asymptotics of the front location \$m\_\textbackslash beta(t)\$. When \$\textbackslash beta {$<$} 2\$, it has the same form as for the standard Fisher-KPP equation established by Bramson \textbackslash cite\{Bramson1,Bramson2\}: \$m\_\textbackslash beta(t) = 2t - (3/2)\textbackslash log(t) + x\_\textbackslash infty + o(1)\$ as \$t\textbackslash to+\textbackslash infty\$. This form is typical of pulled fronts. When \$\textbackslash beta {$>$} 2\$, the front is located at the position \$m\_\textbackslash beta(t)=c\_*(\textbackslash beta)t+x\_\textbackslash infty+o(1)\$ with \$c\_*(\textbackslash beta)=\textbackslash beta/2+2/\textbackslash beta\$, which is the typical form of pushed fronts. However, at the critical value \$\textbackslash beta\_c = 2\$, the expansion changes to \$m\_\textbackslash beta(t) = 2t - (1/2)\textbackslash log(t) + x\_\textbackslash infty + o(1)\$, reflecting the "pushmi-pullyu" nature of the front. The arguments for \$\textbackslash beta{$<$}2\$ rely on a new weighted Hopf-Cole transform that allows to control the advection term, when combined with additional steepness comparison arguments. The case \$\textbackslash beta{$>$}2\$ relies on standard pushed front techniques. The proof in the case \$\textbackslash beta=\textbackslash beta\_c\$ is much more intricate and involves arguments not usually encountered in the study of the Bramson correction. It relies on a somewhat hidden viscous conservation law structure of the Burgers-FKPP equation at \$\textbackslash beta\_c=2\$ and utilizes a dissipation inequality, which comes from a relative entropy type computation, together with a weighted Nash inequality involving dynamically changing weights.},
  archiveprefix = {arXiv},
  keywords = {Mathematics - Analysis of PDEs},
  file = {/home/lena/snap/zotero-snap/common/Zotero/storage/3AN8HQJN/An et al. - 2021 - Pushed, pulled and pushmi-pullyu fronts of the Bur.pdf;/home/lena/snap/zotero-snap/common/Zotero/storage/VCFPVDXY/2108.html}
}

@misc{an2022,
  title = {Quantitative Steepness, Semi-{{FKPP}} Reactions, and Pushmi-Pullyu Fronts},
  author = {An, Jing and Henderson, Christopher and Ryzhik, Lenya},
  date = {2022-08-04},
  number = {arXiv:2208.02880},
  eprint = {2208.02880},
  eprinttype = {arxiv},
  primaryclass = {math},
  publisher = {{arXiv}},
  doi = {10.48550/arXiv.2208.02880},
  url = {http://arxiv.org/abs/2208.02880},
  urldate = {2022-09-12},
  abstract = {We uncover a seemingly previously unnoticed algebraic structure of a large class of reaction-diffusion equations and use it, in particular, to study the long time behavior of the solutions and their convergence to traveling waves in the pulled and pushed regimes, as well as at the pushmi-pullyu boundary. One such new object introduced in this paper is the shape defect function, which, indirectly, measures the difference between the profiles of the solution and the traveling wave. While one can recast the classical notion of `steepness' in terms of the positivity of the shape defect function, its positivity can, surprisingly, be used in numerous quantitative ways. In particular, the positivity is used in a new weighted Hopf-Cole transform and in a relative entropy approach that play a key role in the stability arguments. The shape defect function also gives a new connection between reaction-diffusion equations at the pulled-pushed transition and reaction conservation laws. Other simple but seemingly new algebraic constructions in the present paper supply various unexpected inequalities sprinkled throughout the paper. Of note is a new variational formulation that applies equally to pulled and pushed fronts, opening the door to an as-yet-elusive variational analysis in the pulled case.},
  archiveprefix = {arXiv},
  keywords = {Mathematics - Analysis of PDEs},
  file = {/home/lena/snap/zotero-snap/common/Zotero/storage/XMCG9GJY/An et al. - 2022 - Quantitative steepness, semi-FKPP reactions, and p.pdf;/home/lena/snap/zotero-snap/common/Zotero/storage/MS2S2YD5/2208.html}
}

@inproceedings{aronson1975,
  title = {Nonlinear Diffusion in Population Genetics, Combustion, and Nerve Pulse Propagation},
  booktitle = {Partial {{Differential Equations}} and {{Related Topics}}},
  author = {Aronson, D. G. and Weinberger, H. F.},
  editor = {Goldstein, Jerome A.},
  date = {1975},
  series = {Lecture {{Notes}} in {{Mathematics}}},
  pages = {5--49},
  publisher = {{Springer}},
  location = {{Berlin, Heidelberg}},
  doi = {10.1007/BFb0070595},
  isbn = {978-3-540-37440-4},
  langid = {english},
  keywords = {Flame Propagation,Maximum Principle,Nonlinear Diffusion,Nonnegative Solution,Travel Wave Solution}
}

@article{aronson1978,
  title = {Multidimensional Nonlinear Diffusion Arising in Population Genetics},
  author = {Aronson, D. G. and Weinberger, Hans F},
  date = {1978-10},
  journaltitle = {Advances in Mathematics},
  volume = {30},
  number = {1},
  pages = {33--76},
  issn = {0001-8708},
  doi = {10.1016/0001-8708(78)90130-5},
  url = {http://www.scopus.com/inward/record.url?scp=49349119963&partnerID=8YFLogxK},
  urldate = {2021-12-15}
}

@misc{avery2022,
  title = {Pushed-to-Pulled Front Transitions: Continuation, Speed Scalings, and Hidden Monotonicity},
  shorttitle = {Pushed-to-Pulled Front Transitions},
  author = {Avery, Montie and Holzer, Matt and Scheel, Arnd},
  date = {2022-06-20},
  number = {arXiv:2206.09989},
  eprint = {2206.09989},
  eprinttype = {arxiv},
  primaryclass = {nlin},
  publisher = {{arXiv}},
  doi = {10.48550/arXiv.2206.09989},
  url = {http://arxiv.org/abs/2206.09989},
  urldate = {2022-09-30},
  abstract = {We analyze the transition between pulled and pushed fronts both analytically and numerically from a model-independent perspective. Based on minimal conceptual assumptions, we show that pushed fronts bifurcate from a branch of pulled fronts with an effective speed correction that scales quadratically in the bifurcation parameter. Strikingly, we find that in this general context without assumptions on comparison principles, the pulled front loses stability and gives way to a pushed front when monotonicity in the leading edge is lost. Our methods rely on far-field core decompositions that identify explicitly asymptotics in the leading edge of the front. We show how the theoretical construction can be directly implemented to yield effective algorithms that determine spreading speeds and bifurcation points with exponentially small error in the domain size. Example applications considered here include an extended Fisher-KPP equation, a Fisher-Burgers equation, negative taxis in combination with logistic population growth, an autocatalytic reaction, and a Lotka-Volterra model.},
  archiveprefix = {arXiv},
  keywords = {Mathematics - Analysis of PDEs,Mathematics - Dynamical Systems,Nonlinear Sciences - Pattern Formation and Solitons},
  file = {/home/lena/snap/zotero-snap/common/Zotero/storage/KUE8ATVX/Avery et al. - 2022 - Pushed-to-pulled front transitions continuation, .pdf;/home/lena/snap/zotero-snap/common/Zotero/storage/LM8PGTN6/2206.html}
}

@article{barton1979,
  title = {The Dynamics of Hybrid Zones},
  author = {Barton, N. H.},
  date = {1979-12},
  journaltitle = {Heredity},
  volume = {43},
  number = {3},
  pages = {341--359},
  publisher = {{Nature Publishing Group}},
  issn = {1365-2540},
  doi = {10.1038/hdy.1979.87},
  url = {https://www.nature.com/articles/hdy197987},
  urldate = {2022-04-18},
  abstract = {This paper investigates the dynamic behaviour of hybrid zones which are maintained by a balance between dispersal and selection against hybrids. In the first section it is shown that a hybrid zone involving a single locus can move in response to a selective imbalance between the two homozygotes, and also to variation in population density and dispersal rate. It can be trapped by natural barriers, and so an allele which is selected against when rare cannot advance, even if it is advantageous when common. The continuous model used in deriving these results is shown to be a good approximation to the stepping-stone model, provided that the cline contains several demes.},
  issue = {3},
  langid = {english},
  keywords = {Biomedicine,Cytogenetics,Ecology,Evolutionary Biology,general,Human Genetics,Plant Genetics and Genomics},
  file = {/home/lena/snap/zotero-snap/common/Zotero/storage/PDPAHFS9/Barton - 1979 - The dynamics of hybrid zones.pdf;/home/lena/snap/zotero-snap/common/Zotero/storage/8XSJ9M9A/hdy197987.html}
}

@article{beaghton2016,
  title = {Gene Drive through a Landscape: {{Reaction}}–Diffusion Models of Population Suppression and Elimination by a Sex Ratio Distorter},
  shorttitle = {Gene Drive through a Landscape},
  author = {Beaghton, Andrea and Beaghton, Pantelis John and Burt, Austin},
  date = {2016-04-01},
  journaltitle = {Theoretical Population Biology},
  shortjournal = {Theoretical Population Biology},
  volume = {108},
  pages = {51--69},
  issn = {0040-5809},
  doi = {10.1016/j.tpb.2015.11.005},
  url = {https://www.sciencedirect.com/science/article/pii/S0040580915001227},
  urldate = {2022-10-10},
  abstract = {Some genes or gene complexes are transmitted from parents to offspring at a greater-than-Mendelian rate, and can spread and persist in populations even if they cause some harm to the individuals carrying them. Such genes may be useful for controlling populations or species that are harmful. Driving-Y chromosomes may be particularly potent in this regard, as they produce a male-biased sex ratio that, if sufficiently extreme, can lead to population elimination. To better understand the potential of such genes to spread over a landscape, we have developed a series of reaction–diffusion models of a driving-Y chromosome in 1-D and radially-symmetric 2-D unbounded domains. The wild-type system at carrying capacity is found to be unstable to the introduction of driving-Y males for all models investigated. Numerical solutions exhibit travelling wave pulses and fronts, and analytical and semi-analytical solutions for the asymptotic wave speed under bounded initial conditions are derived. The driving-Y male invades the wild-type equilibrium state at the front of the wave and completely replaces the wild-type males, leaving behind, at the tail of the wave, a reduced- or zero-population state of females and driving-Y males only. In our simplest model of a population with one life stage and density-dependent mortality, wave speed depends on the strength of drive and the diffusion rate of Y-drive males, and is independent of the population dynamic consequences (suppression or elimination). Incorporating an immobile juvenile stage of fixed duration into the model reduces wave speed approximately in proportion to the relative time spent as a juvenile. If females mate just once in their life, storing sperm for subsequent reproduction, then wave speed depends on the movement of mated females as well as Y-drive males, and may be faster or slower than in the multiple-mating model, depending on the relative duration of juvenile and adult life stages. Numerical solutions are shown for parameter values that may in part be representative for Anopheles gambiae, the primary vector of malaria in sub-Saharan Africa.},
  langid = {english},
  keywords = {Genetic drive,Malaria,Mosquito,Reaction–diffusion,Selfish gene,Travelling wave},
  file = {/home/lena/snap/zotero-snap/common/Zotero/storage/HEWU57EP/Beaghton et al. - 2016 - Gene drive through a landscape Reaction–diffusion.pdf}
}

@article{birzu2018,
  title = {Fluctuations Uncover a Distinct Class of Traveling Waves},
  author = {Birzu, Gabriel and Hallatschek, Oskar and Korolev, Kirill S.},
  date = {2018-04-17},
  journaltitle = {Proceedings of the National Academy of Sciences},
  volume = {115},
  number = {16},
  pages = {E3645-E3654},
  publisher = {{Proceedings of the National Academy of Sciences}},
  doi = {10.1073/pnas.1715737115},
  url = {https://www.pnas.org/doi/abs/10.1073/pnas.1715737115},
  urldate = {2022-10-07}
}

@article{buchman2020,
  title = {Broad Dengue Neutralization in Mosquitoes Expressing an Engineered Antibody},
  author = {Buchman, Anna and Gamez, Stephanie and Li, Ming and Antoshechkin, Igor and Li, Hsing-Han and Wang, Hsin-Wei and Chen, Chun-Hong and Klein, Melissa J. and Duchemin, Jean-Bernard and Jr, James E. Crowe and Paradkar, Prasad N. and Akbari, Omar S.},
  date = {2020-01-16},
  journaltitle = {PLOS Pathogens},
  shortjournal = {PLOS Pathogens},
  volume = {16},
  number = {1},
  pages = {e1008103},
  publisher = {{Public Library of Science}},
  issn = {1553-7374},
  doi = {10.1371/journal.ppat.1008103},
  url = {https://journals.plos.org/plospathogens/article?id=10.1371/journal.ppat.1008103},
  urldate = {2022-02-28},
  abstract = {With dengue virus (DENV) becoming endemic in tropical and subtropical regions worldwide, there is a pressing global demand for effective strategies to control the mosquitoes that spread this disease. Recent advances in genetic engineering technologies have made it possible to create mosquitoes with reduced vector competence, limiting their ability to acquire and transmit pathogens. Here we describe the development of Aedes aegypti mosquitoes synthetically engineered to impede vector competence to DENV. These mosquitoes express a gene encoding an engineered single-chain variable fragment derived from a broadly neutralizing DENV human monoclonal antibody and have significantly reduced viral infection, dissemination, and transmission rates for all four major antigenically distinct DENV serotypes. Importantly, this is the first engineered approach that targets all DENV serotypes, which is crucial for effective disease suppression. These results provide a compelling route for developing effective genetic-based DENV control strategies, which could be extended to curtail other arboviruses.},
  langid = {english},
  keywords = {Antibodies,Blood,Dengue virus,Eggs,Heterozygosity,Homozygosity,Larvae,Mosquitoes},
  file = {/home/lena/snap/zotero-snap/common/Zotero/storage/WVD7ZEM6/Buchman et al. - 2020 - Broad dengue neutralization in mosquitoes expressi.pdf;/home/lena/snap/zotero-snap/common/Zotero/storage/M7JMMJQH/article.html}
}

@article{burt2003,
  title = {Site-Specific Selfish Genes as Tools for the Control and Genetic Engineering of Natural Populations.},
  author = {Burt, Austin},
  date = {2003-05-07},
  journaltitle = {Proceedings of the Royal Society B: Biological Sciences},
  shortjournal = {Proc Biol Sci},
  volume = {270},
  number = {1518},
  eprint = {12803906},
  eprinttype = {pmid},
  pages = {921--928},
  issn = {0962-8452},
  doi = {10.1098/rspb.2002.2319},
  url = {https://royalsocietypublishing.org/doi/pdf/10.1098/rspb.2002.2319},
  urldate = {2021-04-23},
  abstract = {Site-specific selfish genes exploit host functions to copy themselves into a defined target DNA sequence, and include homing endonuclease genes, group II introns and some LINE-like transposable elements. If such genes can be engineered to target new host sequences, then they can be used to manipulate natural populations, even if the number of individuals released is a small fraction of the entire population. For example, a genetic load sufficient to eradicate a population can be imposed in fewer than 20 generations, if the target is an essential host gene, the knockout is recessive and the selfish gene has an appropriate promoter. There will be selection for resistance, but several strategies are available for reducing the likelihood of it evolving. These genes may also be used to genetically engineer natural populations, by means of population-wide gene knockouts, gene replacements and genetic transformations. By targeting sex-linked loci just prior to meiosis one may skew the population sex ratio, and by changing the promoter one may limit the spread of the gene to neighbouring populations. The proposed constructs are evolutionarily stable in the face of the mutations most likely to arise during their spread, and strategies are also available for reversing the manipulations.},
  pmcid = {PMC1691325},
  file = {/home/lena/snap/zotero-snap/common/Zotero/storage/AN4HGDC5/Burt - 2003 - Site-specific selfish genes as tools for the contr.pdf}
}

@article{champer2021,
  title = {Suppression Gene Drive in Continuous Space Can Result in Unstable Persistence of Both Drive and Wild-Type Alleles},
  author = {Champer, Jackson and Kim, Isabel K. and Champer, Samuel E. and Clark, Andrew G. and Messer, Philipp W.},
  date = {2021-02},
  journaltitle = {Molecular Ecology},
  shortjournal = {Mol Ecol},
  volume = {30},
  number = {4},
  eprint = {33404162},
  eprinttype = {pmid},
  pages = {1086--1101},
  issn = {1365-294X},
  doi = {10.1111/mec.15788},
  abstract = {Rapid evolutionary processes can produce drastically different outcomes when studied in panmictic population models vs. spatial models. One such process is gene drive, which describes the spread of "selfish" genetic elements through a population. Engineered gene drives are being considered for the suppression of disease vectors or invasive species. While laboratory experiments and modelling in panmictic populations have shown that such drives can rapidly eliminate a population, it remains unclear if these results translate to natural environments where individuals inhabit a continuous landscape. Using spatially explicit simulations, we show that the release of a suppression drive can result in what we term "chasing" dynamics, in which wild-type individuals recolonize areas where the drive has locally eliminated the population. Despite the drive subsequently reconquering these areas, complete population suppression often fails to occur or is substantially delayed. This increases the likelihood that the drive is lost or that resistance evolves. We analyse how chasing dynamics are influenced by the type of drive, its efficiency, fitness costs, and ecological factors such as the maximal growth rate of the population and levels of dispersal and inbreeding. We find that chasing is more common for lower efficiency drives when dispersal is low and that some drive mechanisms are substantially more prone to chasing behaviour than others. Our results demonstrate that the population dynamics of suppression gene drives are determined by a complex interplay of genetic and ecological factors, highlighting the need for realistic spatial modelling to predict the outcome of drive releases in natural populations.},
  langid = {english},
  pmcid = {PMC7887089},
  keywords = {biotechnology,ecological genetics,genetically modified organisms,population dynamics,population ecology,population genetics - theoretical}
}

@book{committeeongenedriveresearch2016,
  title = {Gene {{Drives}} on the {{Horizon}}: {{Advancing Science}}, {{Navigating Uncertainty}}, and {{Aligning Research}} with {{Public Values}}},
  shorttitle = {Gene {{Drives}} on the {{Horizon}}},
  author = {{Committee on Gene Drive Research in Non-Human Organisms: Recommendations for Responsible Conduct} and {Board on Life Sciences} and {Division on Earth and Life Studies} and {National Academies of Sciences, Engineering, and Medicine}},
  date = {2016},
  eprint = {27536751},
  eprinttype = {pmid},
  publisher = {{National Academies Press (US)}},
  location = {{Washington (DC)}},
  url = {http://www.ncbi.nlm.nih.gov/books/NBK379277/},
  urldate = {2021-04-23},
  abstract = {Research on gene drive systems is rapidly advancing. Many proposed applications of gene drive research aim to solve environmental and public health challenges, including the reduction of poverty and the burden of vector-borne diseases, such as malaria and dengue, which disproportionately impact low and middle income countries. However, due to their intrinsic qualities of rapid spread and irreversibility, gene drive systems raise many questions with respect to their safety relative to public and environmental health. Because gene drive systems are designed to alter the environments we share in ways that will be hard to anticipate and impossible to completely roll back, questions about the ethics surrounding use of this research are complex and will require very careful exploration.  Gene Drives on the Horizon outlines the state of knowledge relative to the science, ethics, public engagement, and risk assessment as they pertain to research directions of gene drive systems and governance of the research process. This report offers principles for responsible practices of gene drive research and related applications for use by investigators, their institutions, the research funders, and regulators.},
  isbn = {978-0-309-43787-5},
  langid = {english}
}

@article{dahirel2021,
  title = {Shifts from Pulled to Pushed Range Expansions Caused by Reduction of Landscape Connectivity},
  author = {Dahirel, Maxime and Bertin, Aline and Haond, Marjorie and Blin, Aurélie and Lombaert, Eric and Calcagno, Vincent and Fellous, Simon and Mailleret, Ludovic and Malausa, Thibaut and Vercken, Elodie},
  date = {2021},
  journaltitle = {Oikos},
  volume = {130},
  number = {5},
  pages = {708--724},
  issn = {1600-0706},
  doi = {10.1111/oik.08278},
  url = {https://onlinelibrary.wiley.com/doi/abs/10.1111/oik.08278},
  urldate = {2022-09-30},
  abstract = {Range expansions are key processes shaping the distribution of species; their ecological and evolutionary dynamics have become especially relevant today, as human influence reshapes ecosystems worldwide. Many attempts to explain and predict range expansions assume, explicitly or implicitly, so-called ‘pulled' expansion dynamics, in which the low-density edge populations provide most of the ‘fuel' for the species advance. Some expansions, however, exhibit very different dynamics, with high-density populations behind the front ‘pushing' the expansion forward. These two types of expansions are predicted to have different effects on e.g. genetic diversity and habitat quality sensitivity. However, empirical studies are lacking due to the challenge of generating reliably pushed versus pulled expansions in the laboratory, or discriminating them in the field. We here propose that manipulating the degree of connectivity among populations may prove a more generalizable way to create pushed expansions. We demonstrate this with individual-based simulations as well as replicated experimental range expansions (using the parasitoid wasp Trichogramma brassicae as model). By analyzing expansion velocities and neutral genetic diversity, we showed that reducing connectivity led to pushed dynamics. Low connectivity alone, i.e. without density-dependent dispersal, can only lead to ‘weakly pushed' expansions, where invasion speed conforms to pushed expectations, but the decline in genetic diversity does not. In empirical expansions however, low connectivity may in some cases also lead to adjustments to the dispersal-density function, recreating ‘classical' pushed expansions. In the current context of habitat loss and fragmentation, we need to better account for this relationship between connectivity and expansion regimes to successfully predict the ecological and evolutionary consequences of range expansions.},
  langid = {english},
  keywords = {biological control,biological invasions,density-dependent dispersal,individual-based model,range shifts,Trichogramma},
  annotation = {\_eprint: https://onlinelibrary.wiley.com/doi/pdf/10.1111/oik.08278},
  file = {/home/lena/snap/zotero-snap/common/Zotero/storage/9ICLMZGM/Dahirel et al. - 2021 - Shifts from pulled to pushed range expansions caus.pdf;/home/lena/snap/zotero-snap/common/Zotero/storage/RIEW746C/oik.html}
}

@article{deredec2008,
  title = {The {{Population Genetics}} of {{Using Homing Endonuclease Genes}} in {{Vector}} and {{Pest Management}}},
  author = {Deredec, Anne and Burt, Austin and Godfray, H. C. J.},
  date = {2008-08},
  journaltitle = {Genetics},
  shortjournal = {Genetics},
  volume = {179},
  number = {4},
  eprint = {18660532},
  eprinttype = {pmid},
  pages = {2013--2026},
  issn = {0016-6731},
  doi = {10.1534/genetics.108.089037},
  url = {https://www.ncbi.nlm.nih.gov/pmc/articles/PMC2516076/pdf/GEN17942013.pdf},
  urldate = {2021-04-23},
  abstract = {Homing endonuclease genes (HEGs) encode proteins that in the heterozygous state cause double-strand breaks in the homologous chromosome at the precise position opposite the HEG. If the double-strand break is repaired using the homologous chromosome, the HEG becomes homozygous, and this represents a powerful genetic drive mechanism that might be used as a tool in managing vector or pest populations. HEGs may be used to decrease population fitness to drive down population densities (possibly causing local extinction) or, in disease vectors, to knock out a gene required for pathogen transmission. The relative advantages of HEGs that target viability or fecundity, that are active in one sex or both, and whose target is expressed before or after homing are explored. The conditions under which escape mutants arise are also analyzed. A different strategy is to place HEGs on the Y chromosome that cause one or more breaks on the X chromosome and so disrupt sex ratio. This strategy can cause severe sex-ratio biases with efficiencies that depend on the details of sperm competition and zygote mortality. This strategy is probably less susceptible to escape mutants, especially when multiple X shredders are used.},
  pmcid = {PMC2516076},
  file = {/home/lena/snap/zotero-snap/common/Zotero/storage/3BUJL3NE/Deredec et al. - 2008 - The Population Genetics of Using Homing Endonuclea.pdf}
}

@article{dhole2020,
  title = {Gene {{Drive Dynamics}} in {{Natural Populations}}: {{The Importance}} of {{Density Dependence}}, {{Space}}, and {{Sex}}},
  shorttitle = {Gene {{Drive Dynamics}} in {{Natural Populations}}},
  author = {Dhole, Sumit and Lloyd, Alun L. and Gould, Fred},
  date = {2020-11},
  journaltitle = {Annual Review of Ecology, Evolution, and Systematics},
  shortjournal = {Annu Rev Ecol Evol Syst},
  volume = {51},
  number = {1},
  eprint = {34366722},
  eprinttype = {pmid},
  pages = {505--531},
  issn = {1543-592X},
  doi = {10.1146/annurev-ecolsys-031120-101013},
  abstract = {The spread of synthetic gene drives is often discussed in the context of panmictic populations connected by gene flow and described with simple deterministic models. Under such assumptions, an entire species could be altered by releasing a single individual carrying an invasive gene drive, such as a standard homing drive. While this remains a theoretical possibility, gene drive spread in natural populations is more complex and merits a more realistic assessment. The fate of any gene drive released in a population would be inextricably linked to the population's ecology. Given the uncertainty often involved in ecological assessment of natural populations, understanding the sensitivity of gene drive spread to important ecological factors is critical. Here we review how different forms of density dependence, spatial heterogeneity, and mating behaviors can impact the spread of self-sustaining gene drives. We highlight specific aspects of gene drive dynamics and the target populations that need further research.},
  langid = {english},
  pmcid = {PMC8340601},
  keywords = {CRISPR,density dependence,genetic pest management,population alteration,spatial dynamics,underdominance},
  file = {/home/lena/snap/zotero-snap/common/Zotero/storage/YKSNF3GI/Dhole et al. - 2020 - Gene Drive Dynamics in Natural Populations The Im.pdf}
}

@article{esvelt2014,
  title = {Concerning {{RNA-guided}} Gene Drives for the Alteration of Wild Populations},
  author = {Esvelt, Kevin M and Smidler, Andrea L and Catteruccia, Flaminia and Church, George M},
  editor = {Tautz, Diethard},
  date = {2014-07-17},
  journaltitle = {eLife},
  volume = {3},
  pages = {e03401},
  publisher = {{eLife Sciences Publications, Ltd}},
  issn = {2050-084X},
  doi = {10.7554/eLife.03401},
  url = {https://doi.org/10.7554/eLife.03401},
  urldate = {2021-04-27},
  abstract = {Gene drives may be capable of addressing ecological problems by altering entire populations of wild organisms, but their use has remained largely theoretical due to technical constraints. Here we consider the potential for RNA-guided gene drives based on the CRISPR nuclease Cas9 to serve as a general method for spreading altered traits through wild populations over many generations. We detail likely capabilities, discuss limitations, and provide novel precautionary strategies to control the spread of gene drives and reverse genomic changes. The ability to edit populations of sexual species would offer substantial benefits to humanity and the environment. For example, RNA-guided gene drives could potentially prevent the spread of disease, support agriculture by reversing pesticide and herbicide resistance in insects and weeds, and control damaging invasive species. However, the possibility of unwanted ecological effects and near-certainty of spread across political borders demand careful assessment of each potential application. We call for thoughtful, inclusive, and well-informed public discussions to explore the responsible use of this currently theoretical technology.},
  keywords = {cas9,CRISPR,ecological engineering,emerging technology,gene drive,population engineering},
  file = {/home/lena/snap/zotero-snap/common/Zotero/storage/S8LLPAMK/Esvelt et al. - 2014 - Concerning RNA-guided gene drives for the alterati.pdf}
}

@article{gantz2015,
  title = {The Mutagenic Chain Reaction: {{A}} Method for Converting Heterozygous to Homozygous Mutations},
  shorttitle = {The Mutagenic Chain Reaction},
  author = {Gantz, Valentino M. and Bier, Ethan},
  date = {2015-04-24},
  journaltitle = {Science},
  volume = {348},
  number = {6233},
  eprint = {25908821},
  eprinttype = {pmid},
  pages = {442--444},
  publisher = {{American Association for the Advancement of Science}},
  issn = {0036-8075, 1095-9203},
  doi = {10.1126/science.aaa5945},
  url = {http://reviverestore.org/wp-content/uploads/2015/03/The-mutagenic-chain-reaction-A-method-for-converting-heterozygous-to-homozygous-mutations-Valentino-M.-Gantz_-and-Ethan-Bier.pdf},
  urldate = {2021-04-23},
  abstract = {Generating homozygous mutations Loss-of-function mutations may only produce a mutant phenotype when both copies of the gene are mutated. Gantz and Bier developed a method they call mutagenic chain reaction (MCR) that autocatalytically produces homozygous mutations. MCR uses the initial mutated allele to cause a mutation in the allele on the opposing chromosome and thus the homozygosity of the trait. MCR technology could have broad applications in diverse organisms. Science, this issue p. 442 An organism with a single recessive loss-of-function allele will typically have a wild-type phenotype, whereas individuals homozygous for two copies of the allele will display a mutant phenotype. We have developed a method called the mutagenic chain reaction (MCR), which is based on the CRISPR/Cas9 genome-editing system for generating autocatalytic mutations, to produce homozygous loss-of-function mutations. In Drosophila, we found that MCR mutations efficiently spread from their chromosome of origin to the homologous chromosome, thereby converting heterozygous mutations to homozygosity in the vast majority of somatic and germline cells. MCR technology should have broad applications in diverse organisms. A mutagenesis strategy autocatalytically converts mutations to the homozygous condition in fly somatic and germline cells. A mutagenesis strategy autocatalytically converts mutations to the homozygous condition in fly somatic and germline cells.},
  langid = {english},
  file = {/home/lena/snap/zotero-snap/common/Zotero/storage/KJGMBVSF/Gantz and Bier - 2015 - The mutagenic chain reaction A method for convert.pdf;/home/lena/snap/zotero-snap/common/Zotero/storage/4UD9A3EE/442.html}
}

@article{gantz2015a,
  title = {Highly Efficient {{Cas9-mediated}} Gene Drive for Population Modification of the Malaria Vector Mosquito {{Anopheles}} Stephensi},
  author = {Gantz, Valentino and Jasinskiene, Nijole and Tatarenkova, Olga and Fazekas, Aniko and Macias, Vanessa and Bier, Ethan and James, Anthony},
  date = {2015-11-23},
  journaltitle = {Proceedings of the National Academy of Sciences},
  shortjournal = {Proceedings of the National Academy of Sciences},
  volume = {112},
  doi = {10.1073/pnas.1521077112},
  url = {https://www.researchgate.net/publication/284458599_Highly_efficient_Cas9-mediated_gene_drive_for_population_modification_of_the_malaria_vector_mosquito_Anopheles_stephensi},
  abstract = {Genetic engineering technologies can be used both to create transgenic mosquitoes carrying antipathogen effector genes targeting human malaria parasites and to generate gene-drive systems capable of introgressing the genes throughout wild vector populations. We developed a highly effective autonomous Clustered Regularly Interspaced Short Palindromic Repeats (CRISPR)-associated protein 9 (Cas9)-mediated gene-drive system in the Asian malaria vector Anopheles stephensi, adapted from the mutagenic chain reaction (MCR). This specific system results in progeny of males and females derived from transgenic males exhibiting a high frequency of germ-line gene conversion consistent with homology-directed repair (HDR). This system copies an ∼17-kb construct from its site of insertion to its homologous chromosome in a faithful, site-specific manner. Dual anti-Plasmodium falciparum effector genes, a marker gene, and the autonomous gene-drive components are introgressed into ∼99.5\% of the progeny following outcrosses of transgenic lines to wild-type mosquitoes. The effector genes remain transcriptionally inducible upon blood feeding. In contrast to the efficient conversion in individuals expressing Cas9 only in the germ line, males and females derived from transgenic females, which are expected to have drive component molecules in the egg, produce progeny with a high frequency of mutations in the targeted genome sequence, resulting in near-Mendelian inheritance ratios of the transgene. Such mutant alleles result presumably from nonhomologous end-joining (NHEJ) events before the segregation of somatic and germ-line lineages early in development. These data support the design of this system to be active strictly within the germ line. Strains based on this technology could sustain control and elimination as part of the malaria eradication agenda.},
  file = {/home/lena/snap/zotero-snap/common/Zotero/storage/HTWL8VCK/Gantz et al. - 2015 - Highly efficient Cas9-mediated gene drive for popu.pdf}
}

@article{girardin2021,
  title = {Demographic Feedbacks Can Hamper the Spatial Spread of a Gene Drive},
  author = {Girardin, Léo and Débarre, Florence},
  date = {2021-12-04},
  journaltitle = {Journal of Mathematical Biology},
  shortjournal = {J. Math. Biol.},
  volume = {83},
  number = {6},
  pages = {67},
  issn = {1432-1416},
  doi = {10.1007/s00285-021-01702-2},
  url = {https://doi.org/10.1007/s00285-021-01702-2},
  urldate = {2021-12-16},
  abstract = {This paper is concerned with a reaction–diffusion system modeling the fixation and the invasion in a population of a gene drive (an allele biasing inheritance, increasing its own transmission to offspring). In our model, the gene drive has a negative effect on the fitness of individuals carrying it, and is therefore susceptible of decreasing the total carrying capacity of the population locally in space. This tends to generate an opposing demographic advection that the gene drive has to overcome in order to invade. While previous reaction–diffusion models neglected this aspect, here we focus on it and try to predict the sign of the traveling wave speed. It turns out to be an analytical challenge, only partial results being within reach, and we complete our theoretical analysis by numerical simulations. Our results indicate that taking into account the interplay between population dynamics and population genetics might actually be crucial, as it can effectively reverse the direction of the invasion and lead to failure. Our findings can be extended to other bistable systems, such as the spread of cytoplasmic incompatibilities caused by Wolbachia.},
  langid = {english},
  file = {/home/lena/snap/zotero-snap/common/Zotero/storage/NYDX7V3B/Girardin and Débarre - 2021 - Demographic feedbacks can hamper the spatial sprea.pdf}
}

@article{grunwald2019,
  title = {Super-{{Mendelian}} Inheritance Mediated by {{CRISPR-Cas9}} in the Female Mouse Germline},
  author = {Grunwald, Hannah A. and Gantz, Valentino M. and Poplawski, Gunnar and Xu, Xiang-Ru S. and Bier, Ethan and Cooper, Kimberly L.},
  date = {2019-02},
  journaltitle = {Nature},
  shortjournal = {Nature},
  volume = {566},
  number = {7742},
  eprint = {30675057},
  eprinttype = {pmid},
  pages = {105--109},
  issn = {1476-4687},
  doi = {10.1038/s41586-019-0875-2},
  abstract = {A gene drive biases the transmission of one of the two copies of a gene such that it is inherited more frequently than by random segregation. Highly efficient gene drive systems have recently been developed in insects, which leverage the sequence-targeted DNA cleavage activity of CRISPR-Cas9 and endogenous homology-directed repair mechanisms to convert heterozygous genotypes to homozygosity1-4. If implemented in laboratory rodents, similar systems would enable the rapid assembly of currently impractical genotypes that involve multiple homozygous genes (for example, to model multigenic human diseases). To our knowledge, however, such a system has not yet been demonstrated in mammals. Here we use an active genetic element that encodes a guide RNA, which is embedded in the mouse tyrosinase (Tyr) gene, to evaluate whether targeted gene conversion can occur when CRISPR-Cas9 is active in the early embryo or in the developing germline. Although Cas9 efficiently induces double-stranded DNA breaks in the early embryo and male germline, these breaks are not corrected by homology-directed repair. By contrast, Cas9 expression limited to the female germline induces double-stranded breaks that are corrected by homology-directed repair, which copies the active genetic element from the donor to the receiver chromosome and increases its rate of inheritance in the next generation. These results demonstrate the feasibility of CRISPR-Cas9-mediated systems that bias inheritance of desired alleles in mice and that have the potential to transform the use of rodent models in basic and biomedical research.},
  langid = {english},
  pmcid = {PMC6367021},
  keywords = {Alleles,Animals,Breeding,Chromosomes; Mammalian,CRISPR-Associated Protein 9,CRISPR-Cas Systems,Disease Models; Animal,DNA Breaks; Double-Stranded,Embryo; Mammalian,Female,Gene Conversion,Gene Drive Technology,Germ-Line Mutation,Heterozygote,Homozygote,Integrases,Male,Mice,Mice; Transgenic,Monophenol Monooxygenase,RNA; Guide,Transgenes},
  file = {/home/lena/snap/zotero-snap/common/Zotero/storage/I2BST4MF/Grunwald et al. - 2019 - Super-Mendelian inheritance mediated by CRISPR-Cas.pdf}
}

@article{hadeler1975,
  title = {Travelling Fronts in Nonlinear Diffusion Equations},
  author = {Hadeler, K. P. and Rothe, F.},
  date = {1975-09-01},
  journaltitle = {Journal of Mathematical Biology},
  shortjournal = {J. Math. Biology},
  volume = {2},
  number = {3},
  pages = {251--263},
  issn = {1432-1416},
  doi = {10.1007/BF00277154},
  url = {https://doi.org/10.1007/BF00277154},
  urldate = {2022-01-31},
  abstract = {In Fisher's model for the migration of advantageous genes, in epidemic models and in the theory of combustion similar existence problems for travelling fronts and waves occur. For a general two-dimensional system of ordinary differential equations depending on a parameter the existence of trajectories connecting stationary points is established. For systems derived from diffusion problems these trajectories describe the shape of a travelling front, the corresponding value of the parameter is the propagation speed. The method allows to determine the exact value of the minimal speed in Fisher's model for all interesting choices of selection parameters, i.e. for intermediate heterozygotes and for inferior heterozygotes.},
  langid = {english}
}

@article{hammond2015,
  title = {A {{CRISPR-Cas9 Gene Drive System Targeting Female Reproduction}} in the {{Malaria Mosquito}} Vector {{Anopheles}} Gambiae},
  author = {Hammond, Andrew and Galizi, Roberto and Kyrou, Kyros and Simoni, Alekos and Siniscalchi, Carla and Katsanos, Dimitris and Gribble, Matthew and Baker, Dean and Marois, Eric and Russell, Steven and Burt, Austin and Windbichler, Nikolai and Crisanti, Andrea and Nolan, Tony},
  date = {2015-12-07},
  journaltitle = {Nature biotechnology},
  shortjournal = {Nature biotechnology},
  volume = {34},
  doi = {10.1038/nbt.3439},
  url = {https://www.researchgate.net/publication/286219237_A_CRISPR-Cas9_Gene_Drive_System_Targeting_Female_Reproduction_in_the_Malaria_Mosquito_vector_Anopheles_gambiae},
  abstract = {Gene drive systems that enable super-Mendelian inheritance of a transgene have the potential to modify insect populations over a timeframe of a few years. We describe CRISPR-Cas9 endonuclease constructs that function as gene drive systems in Anopheles gambiae, the main vector for malaria. We identified three genes (AGAP005958, AGAP011377 and AGAP007280) that confer a recessive female-sterility phenotype upon disruption, and inserted into each locus CRISPR-Cas9 gene drive constructs designed to target and edit each gene. For each targeted locus we observed a strong gene drive at the molecular level, with transmission rates to progeny of 91.4 to 99.6\%. Population modeling and cage experiments indicate that a CRISPR-Cas9 construct targeting one of these loci, AGAP007280, meets the minimum requirement for a gene drive targeting female reproduction in an insect population. These findings could expedite the development of gene drives to suppress mosquito populations to levels that do not support malaria transmission.},
  file = {/home/lena/snap/zotero-snap/common/Zotero/storage/62ST3BN8/Hammond et al. - 2015 - A CRISPR-Cas9 Gene Drive System Targeting Female R.pdf}
}

@online{holzer2022,
  title = {Personal Communication, {{Conference}} "{{Parabolic}} and  Kinetic Models in Population Dynamics", {{Labex CIMI}}, {{Toulouse}}.},
  author = {Holzer, Matt},
  date = {2022},
  url = {https://indico.math.cnrs.fr/event/7589/},
  file = {/home/lena/snap/zotero-snap/common/Zotero/storage/B946Z3RY/7589.html}
}

@article{jinek2012,
  title = {A Programmable Dual {{RNA-guided DNA}} Endonuclease in Adaptive Bacterial Immunity},
  author = {Jinek, Martin and Chylinski, Krzysztof and Fonfara, Ines and Hauer, Michael and Doudna, Jennifer A. and Charpentier, Emmanuelle},
  date = {2012-08-17},
  journaltitle = {Science (New York, N.Y.)},
  shortjournal = {Science},
  volume = {337},
  number = {6096},
  eprint = {22745249},
  eprinttype = {pmid},
  pages = {816--821},
  issn = {0036-8075},
  doi = {10.1126/science.1225829},
  url = {https://www.ncbi.nlm.nih.gov/pmc/articles/PMC6286148/pdf/nihms-995853.pdf},
  urldate = {2021-04-23},
  abstract = {CRISPR/Cas systems provide bacteria and archaea with adaptive immunity against viruses and plasmids by using crRNAs to guide the silencing of invading nucleic acids. We show here that in a subset of these systems, the mature crRNA base-paired to trans-activating tracrRNA forms a two-RNA structure that directs the CRISPR-associated protein Cas9 to introduce double-stranded (ds) breaks in target DNA. At sites complementary to the crRNA-guide sequence, the Cas9 HNH nuclease domain cleaves the complementary strand while the Cas9 RuvC-like domain cleaves the non-complementary strand. The dual-tracrRNA:crRNA, when engineered as a single RNA chimera, also directs sequence-specific Cas9 dsDNA cleavage. Our study reveals a family of endonucleases that use dual-RNAs for site-specific DNA cleavage and highlights the potential to exploit the system for RNA-programmable genome editing., A two-RNA structure directs an endonuclease to cleave target DNA.},
  pmcid = {PMC6286148},
  file = {/home/lena/snap/zotero-snap/common/Zotero/storage/T7LEFYGE/Jinek et al. - 2012 - A programmable dual RNA-guided DNA endonuclease in.pdf}
}

@article{kyrou2018,
  title = {A {{CRISPR}}–{{Cas9}} Gene Drive Targeting Doublesex Causes Complete Population Suppression in Caged {{Anopheles}} Gambiae Mosquitoes},
  author = {Kyrou, Kyros and Hammond, Andrew M. and Galizi, Roberto and Kranjc, Nace and Burt, Austin and Beaghton, Andrea K. and Nolan, Tony and Crisanti, Andrea},
  date = {2018-11},
  journaltitle = {Nature Biotechnology},
  shortjournal = {Nat Biotechnol},
  volume = {36},
  number = {11},
  pages = {1062--1066},
  publisher = {{Nature Publishing Group}},
  issn = {1546-1696},
  doi = {10.1038/nbt.4245},
  url = {https://www.nature.com/articles/nbt.4245},
  urldate = {2022-02-28},
  abstract = {Complete population collapse of malaria vector Anopheles gambiae in cages is achieved using a gene drive that targets doublesex.},
  issue = {11},
  langid = {english},
  keywords = {Genetic engineering,Genetic techniques},
  file = {/home/lena/snap/zotero-snap/common/Zotero/storage/Y4Z2BVVU/Kyrou et al. - 2018 - A CRISPR–Cas9 gene drive targeting doublesex cause.pdf;/home/lena/snap/zotero-snap/common/Zotero/storage/NMZJ5ZGY/nbt.html}
}

@article{li2020,
  title = {Can {{CRISPR}} Gene Drive Work in Pest and Beneficial Haplodiploid Species?},
  author = {Li, Jun and Aidlin Harari, Ofer and Doss, Anna-Louise and Walling, Linda L. and Atkinson, Peter W. and Morin, Shai and Tabashnik, Bruce E.},
  date = {2020},
  journaltitle = {Evolutionary Applications},
  volume = {13},
  number = {9},
  pages = {2392--2403},
  issn = {1752-4571},
  doi = {10.1111/eva.13032},
  url = {https://onlinelibrary.wiley.com/doi/abs/10.1111/eva.13032},
  urldate = {2022-10-10},
  abstract = {Gene drives based on CRISPR/Cas9 have the potential to reduce the enormous harm inflicted by crop pests and insect vectors of human disease, as well as to bolster valued species. In contrast with extensive empirical and theoretical studies in diploid organisms, little is known about CRISPR gene drive in haplodiploids, despite their immense global impacts as pollinators, pests, natural enemies of pests, and invasive species in native habitats. Here, we analyze mathematical models demonstrating that, in principle, CRISPR homing gene drive can work in haplodiploids, as well as at sex-linked loci in diploids. However, relative to diploids, conditions favoring the spread of alleles deleterious to haplodiploid pests by CRISPR gene drive are narrower, the spread is slower, and resistance to the drive evolves faster. By contrast, the spread of alleles that impose little fitness cost or boost fitness was not greatly hindered in haplodiploids relative to diploids. Therefore, altering traits to minimize damage caused by harmful haplodiploids, such as interfering with transmission of plant pathogens, may be more likely to succeed than control efforts based on introducing traits that reduce pest fitness. Enhancing fitness of beneficial haplodiploids with CRISPR gene drive is also promising.},
  langid = {english},
  keywords = {CRISPR/Cas9,gene drive,genetic engineering,haplodiploid,pests,pollinators,sex-linked},
  annotation = {\_eprint: https://onlinelibrary.wiley.com/doi/pdf/10.1111/eva.13032},
  file = {/home/lena/snap/zotero-snap/common/Zotero/storage/U2UNTJJ6/Li et al. - 2020 - Can CRISPR gene drive work in pest and beneficial .pdf;/home/lena/snap/zotero-snap/common/Zotero/storage/DMDR5CJ6/eva.html}
}

@article{liu2022,
  title = {Modelling Homing Suppression Gene Drive in Haplodiploid Organisms},
  author = {Liu, Yiran and Champer, Jackson},
  date = {2022-04-13},
  journaltitle = {Proceedings of the Royal Society B: Biological Sciences},
  shortjournal = {Proc. R. Soc. B.},
  volume = {289},
  number = {1972},
  pages = {20220320},
  issn = {0962-8452, 1471-2954},
  doi = {10.1098/rspb.2022.0320},
  url = {https://royalsocietypublishing.org/doi/10.1098/rspb.2022.0320},
  urldate = {2022-05-18},
  abstract = {Gene drives have shown great promise for suppression of pest populations. These engineered alleles can function by a variety of mechanisms, but the most common is the CRISPR homing drive, which converts wild-type alleles to drive alleles in the germline of heterozygotes. Some potential target species are haplodiploid, in which males develop from unfertilized eggs and thus have only one copy of each chromosome. This prevents drive conversion, a substantial disadvantage compared to diploids where drive conversion can take place in both sexes. Here, we study homing suppression gene drives in haplodiploids and find that a drive targeting a female fertility gene could still be successful. However, such drives are less powerful than in diploids and suffer more from functional resistance alleles. They are substantially more vulnerable to high resistance allele formation in the embryo owing to maternally deposited Cas9 and guide RNA and also to somatic cleavage activity. Examining spatial models where organisms move over a continuous landscape, we find that haplodiploid suppression drives surprisingly perform nearly as well as in diploids, possibly owing to their ability to spread further before inducing strong suppression. Together, these results indicate that gene drive can potentially be used to effectively suppress haplodiploid populations.},
  langid = {english}
}

@article{nadin2018,
  title = {Hindrances to Bistable Front Propagation: Application to {{Wolbachia}} Invasion},
  shorttitle = {Hindrances to Bistable Front Propagation},
  author = {Nadin, Grégoire and Strugarek, Martin and Vauchelet, Nicolas},
  date = {2018-05-01},
  journaltitle = {Journal of Mathematical Biology},
  shortjournal = {J. Math. Biol.},
  volume = {76},
  number = {6},
  pages = {1489--1533},
  issn = {1432-1416},
  doi = {10.1007/s00285-017-1181-y},
  url = {https://doi.org/10.1007/s00285-017-1181-y},
  urldate = {2022-09-12},
  abstract = {We study the biological situation when an invading population propagates and replaces an existing population with different characteristics. For instance, this may occur in the presence of a vertically transmitted infection causing a cytoplasmic effect similar to the Allee effect (e.g. Wolbachia in Aedes mosquitoes): the invading dynamics we model is bistable. We aim at quantifying the propagules (what does it take for an invasion to start?) and the invasive power (how far can an invading front go, and what can stop it?). We rigorously show that a heterogeneous environment inducing a strong enough population gradient can stop an invading front, which will converge in this case to a stable front. We characterize the critical population jump, and also prove the existence of unstable fronts above the stable (blocking) fronts. Being above the maximal unstable front enables an invading front to clear the obstacle and propagate further. We are particularly interested in the case of artificial Wolbachia infection, used as a tool to fight arboviruses.},
  langid = {english},
  keywords = {34B18,35B40,35K57,92D25,Bistable reaction–diffusion,Front propagation,Shooting argument,Wave-blocking,Wolbachia},
  file = {/home/lena/snap/zotero-snap/common/Zotero/storage/MLZW8U6A/Nadin et al. - 2018 - Hindrances to bistable front propagation applicat.pdf}
}

@article{neve2018,
  title = {Gene Drive Systems: Do They Have a Place in Agricultural Weed Management?},
  shorttitle = {Gene Drive Systems},
  author = {Neve, Paul},
  date = {2018},
  journaltitle = {Pest Management Science},
  volume = {74},
  number = {12},
  pages = {2671--2679},
  issn = {1526-4998},
  doi = {10.1002/ps.5137},
  url = {https://onlinelibrary.wiley.com/doi/abs/10.1002/ps.5137},
  urldate = {2022-03-01},
  abstract = {There is a pressing need for novel control techniques in agricultural weed management. Direct genetic control of agricultural pests encompasses a range of techniques to introduce and spread novel, fitness-reducing genetic modifications through pest populations. Recently, the development of CRISPR–Cas9 gene editing has brought these approaches into sharper focus. Proof of concept for CRISPR–Cas9-based gene drives has been demonstrated for the control of disease-vectoring insects. This article considers whether and how gene drives may be applied in agricultural weed management, focusing on CRISPR–Cas9-based systems. Population-suppression drives might be employed to introduce and proliferate deleterious mutations that directly impact fitness and weediness, whereas population-sensitizing drives would seek to edit weed genomes so that populations are rendered more sensitive to subsequent management interventions. Technical challenges relating to plant transformation and gene editing in planta are considered, and the implementation of gene drives for timely and sustainable weed management is reviewed in the light of weed population biology. The technical, biological, practical and regulatory challenges remain significant. Modelling-based studies can inform how and if gene drives could be employed in weed populations. These studies are an essential first step towards determining the utility of gene drives for weed management. © 2018 The Author. Pest Management Science published by John Wiley \& Sons Ltd on behalf of Society of Chemical Industry.},
  langid = {english},
  keywords = {CRISPR–Cas9,direct genetic control,gene drive,herbicide resistance,weed management},
  annotation = {\_eprint: https://onlinelibrary.wiley.com/doi/pdf/10.1002/ps.5137},
  file = {/home/lena/snap/zotero-snap/common/Zotero/storage/Z5UQQFCC/Neve - 2018 - Gene drive systems do they have a place in agricu.pdf;/home/lena/snap/zotero-snap/common/Zotero/storage/76R8BGAU/ps.html}
}

@article{north2020,
  title = {Modelling the Suppression of a Malaria Vector Using a {{CRISPR-Cas9}} Gene Drive to Reduce Female Fertility},
  author = {North, Ace R. and Burt, Austin and Godfray, H. Charles J.},
  date = {2020-08-11},
  journaltitle = {BMC Biology},
  shortjournal = {BMC Biol},
  volume = {18},
  eprint = {32782000},
  eprinttype = {pmid},
  pages = {98},
  issn = {1741-7007},
  doi = {10.1186/s12915-020-00834-z},
  url = {https://www.ncbi.nlm.nih.gov/pmc/articles/PMC7422583/},
  urldate = {2022-02-28},
  abstract = {Background Gene drives based on CRISPR-Cas9 technology are increasingly being considered as tools for reducing the capacity of mosquito populations to transmit malaria, and one of the most promising options is driving endonuclease genes that reduce the fertility of female mosquitoes. In particular, there is much interest in constructs that target the conserved mosquito doublesex (dsx) gene such that the emergence of functional drive-resistant alleles is unlikely. Proof of principle that these constructs can lead to substantial population suppression has been obtained in population cages, and they are being evaluated for use in sub-Saharan Africa. Here, we use simulation modelling to understand the factors affecting the spread of this type of gene drive over a one million-square kilometre area of West Africa containing substantial environmental and social heterogeneity. Results We found that a driving endonuclease gene targeting female fertility could lead to substantial reductions in malaria vector populations on a regional scale. The exact level of suppression is influenced by additional fitness costs of the transgene such as the somatic expression of Cas9, and its deposition in sperm or eggs leading to damage to the zygote. In the absence of these costs, or of emergent drive-resistant alleles that restore female fertility, population suppression across the study area is predicted to stabilise at \textasciitilde\,95\% 4~years after releases commence. Small additional fitness costs do not greatly affect levels of suppression, though if the fertility of females whose offspring transmit the construct drops by more than \textasciitilde\,40\%, then population suppression is much less efficient. We show the suppression potential of a drive allele with high fitness costs can be enhanced by engineering it also to express male bias in the progeny of transgenic males. Irrespective of the strength of the drive allele, the spatial model predicts somewhat less suppression than equivalent non-spatial models, in particular in highly seasonal regions where dry season stochasticity reduces drive efficiency. We explored the robustness of these results to uncertainties in mosquito ecology, in particular their method of surviving the dry season and their dispersal rates. Conclusions The modelling presented here indicates that considerable suppression of vector populations can be achieved within a few years of using a female sterility gene drive, though the impact is likely to be heterogeneous in space and time.},
  pmcid = {PMC7422583},
  file = {/home/lena/snap/zotero-snap/common/Zotero/storage/37ILXFV5/North et al. - 2020 - Modelling the suppression of a malaria vector usin.pdf}
}

@book{otto2011,
  title = {A {{Biologist}}'s {{Guide}} to {{Mathematical Modeling}} in {{Ecology}} and {{Evolution}}},
  author = {Otto, Sarah P. and Day, Troy},
  date = {2011-09-19},
  publisher = {{Princeton University Press}},
  doi = {10.1515/9781400840915},
  url = {https://www.degruyter.com/document/doi/10.1515/9781400840915/html},
  urldate = {2022-10-10},
  abstract = {Thirty years ago, biologists could get by with a rudimentary grasp of mathematics and modeling. Not so today. In seeking to answer fundamental questions about how biological systems function and change over time, the modern biologist is as likely to rely on sophisticated mathematical and computer-based models as traditional fieldwork. In this book, Sarah Otto and Troy Day provide biology students with the tools necessary to both interpret models and to build their own. The book starts at an elementary level of mathematical modeling, assuming that the reader has had high school mathematics and first-year calculus. Otto and Day then gradually build in depth and complexity, from classic models in ecology and evolution to more intricate class-structured and probabilistic models. The authors provide primers with instructive exercises to introduce readers to the more advanced subjects of linear algebra and probability theory. Through examples, they describe how models have been used to understand such topics as the spread of HIV, chaos, the age structure of a country, speciation, and extinction. Ecologists and evolutionary biologists today need enough mathematical training to be able to assess the power and limits of biological models and to develop theories and models themselves. This innovative book will be an indispensable guide to the world of mathematical models for the next generation of biologists. A how-to guide for developing new mathematical models in biology Provides step-by-step recipes for constructing and analyzing models Interesting biological applications Explores classical models in ecology and evolution Questions at the end of every chapter Primers cover important mathematical topics Exercises with answers Appendixes summarize useful rules Labs and advanced material available},
  isbn = {978-1-4008-4091-5},
  langid = {english},
  file = {/home/lena/snap/zotero-snap/common/Zotero/storage/UVTBY72G/Otto and Day - 2011 - A Biologist's Guide to Mathematical Modeling in Ec.pdf}
}

@article{peischl2013,
  title = {On the Accumulation of Deleterious Mutations during Range Expansions},
  author = {Peischl, S. and Dupanloup, I. and Kirkpatrick, M. and Excoffier, L.},
  date = {2013},
  journaltitle = {Molecular Ecology},
  volume = {22},
  number = {24},
  pages = {5972--5982},
  issn = {1365-294X},
  doi = {10.1111/mec.12524},
  url = {https://onlinelibrary.wiley.com/doi/abs/10.1111/mec.12524},
  urldate = {2022-09-30},
  abstract = {We investigate the effect of spatial range expansions on the evolution of fitness when beneficial and deleterious mutations cosegregate. We perform individual-based simulations of 1D and 2D range expansions and complement them with analytical approximations for the evolution of mean fitness at the edge of the expansion. We find that deleterious mutations accumulate steadily on the wave front during range expansions, thus creating an expansion load. Reduced fitness due to the expansion load is not restricted to the wave front, but occurs over a large proportion of newly colonized habitats. The expansion load can persist and represent a major fraction of the total mutation load for thousands of generations after the expansion. The phenomenon of expansion load may explain growing evidence that populations that have recently expanded, including humans, show an excess of deleterious mutations. To test the predictions of our model, we analyse functional genetic diversity in humans and find patterns that are consistent with our model.},
  langid = {english},
  keywords = {expansion load,fitness,gene surfing,mutation load,selective sweep},
  annotation = {\_eprint: https://onlinelibrary.wiley.com/doi/pdf/10.1111/mec.12524},
  file = {/home/lena/snap/zotero-snap/common/Zotero/storage/IIXU2DSI/Peischl et al. - 2013 - On the accumulation of deleterious mutations durin.pdf;/home/lena/snap/zotero-snap/common/Zotero/storage/5HA8R4VS/mec.html}
}

@article{rode2019,
  title = {Population Management Using Gene Drive: Molecular Design, Models of~Spread Dynamics and Assessment of Ecological Risks},
  shorttitle = {Population Management Using Gene Drive},
  author = {Rode, Nicolas O. and Estoup, Arnaud and Bourguet, Denis and Courtier-Orgogozo, Virginie and Débarre, Florence},
  date = {2019-08-01},
  journaltitle = {Conservation Genetics},
  shortjournal = {Conserv Genet},
  volume = {20},
  number = {4},
  pages = {671--690},
  issn = {1572-9737},
  doi = {10.1007/s10592-019-01165-5},
  url = {https://doi.org/10.1007/s10592-019-01165-5},
  urldate = {2021-04-29},
  abstract = {CRISPR gene drive has recently been proposed as a promising technology for population management, including in conservation genetics. The technique would consist in releasing genetically engineered individuals that are designed to rapidly propagate a desired mutation or transgene into wild populations. Potential applications in conservation biology include the control of invasive pest populations that threaten biodiversity (eradication and suppression drives), or the introduction of beneficial mutations in endangered populations (rescue drives). The propagation of a gene drive is affected by different factors that depend on the drive construct (e.g. its fitness effect and timing of expression) or on the target species (e.g. its mating system and population structure). We review potential applications of the different types of gene drives for conservation. We examine the challenges posed by the evolution of resistance to gene drives and review the various molecular and environmental risks associated with gene drives (e.g. propagation to non target populations or species and unintended detrimental ecosystem impacts). We provide some guidelines for future gene drive research and discuss ethical, biosafety and regulation issues.},
  langid = {english},
  file = {/home/lena/snap/zotero-snap/common/Zotero/storage/ZPG8UGQ2/Rode et al. - 2019 - Population management using gene drive molecular .pdf}
}

@article{roques2012,
  title = {Allee Effect Promotes Diversity in Traveling Waves of Colonization},
  author = {Roques, Lionel and Garnier, Jimmy and Hamel, François and Klein, Etienne K.},
  date = {2012-06-05},
  journaltitle = {Proceedings of the National Academy of Sciences},
  volume = {109},
  number = {23},
  pages = {8828--8833},
  publisher = {{Proceedings of the National Academy of Sciences}},
  doi = {10.1073/pnas.1201695109},
  url = {https://www.pnas.org/doi/abs/10.1073/pnas.1201695109},
  urldate = {2022-09-12},
  file = {/home/lena/snap/zotero-snap/common/Zotero/storage/VQNW62XV/Roques et al. - 2012 - Allee effect promotes diversity in traveling waves.pdf}
}

@article{strugarek2016,
  title = {Reduction to a {{Single Closed Equation}} for 2-by-2 {{Reaction-Diffusion Systems}} of {{Lotka--Volterra Type}}},
  author = {Strugarek, Martin and Vauchelet, Nicolas},
  date = {2016-01-01},
  journaltitle = {SIAM Journal on Applied Mathematics},
  shortjournal = {SIAM J. Appl. Math.},
  volume = {76},
  number = {5},
  pages = {2060--2080},
  publisher = {{Society for Industrial and Applied Mathematics}},
  issn = {0036-1399},
  doi = {10.1137/16M1059217},
  url = {https://epubs.siam.org/doi/abs/10.1137/16M1059217},
  urldate = {2021-10-20},
  abstract = {We consider general models of coupled reaction-diffusion systems for interacting variants of a species. When the total population becomes large with intensive competition, we prove that the frequency (i.e., proportion) of a variant can be approached by the solution of a single reaction-diffusion equation, through a singular limit method and a relative compactness argument. Applying this result, we retrieve the classical bistable equation for Wolbachia's spread into an arthropod population from a system modeling interaction between infected and uninfected individuals.},
  keywords = {35K57,92D25,asymptotic analysis,model reduction,population dynamics,reaction-diffusion systems},
  file = {/home/lena/snap/zotero-snap/common/Zotero/storage/IKM269YG/Strugarek and Vauchelet - 2016 - Reduction to a Single Closed Equation for 2-by-2 R.pdf}
}

@article{tanaka2017,
  title = {Spatial Gene Drives and Pushed Genetic Waves},
  author = {Tanaka, Hidenori and Stone, Howard A. and Nelson, David R.},
  date = {2017-08-08},
  journaltitle = {Proceedings of the National Academy of Sciences of the United States of America},
  shortjournal = {Proc Natl Acad Sci U S A},
  volume = {114},
  number = {32},
  eprint = {28743753},
  eprinttype = {pmid},
  pages = {8452--8457},
  issn = {1091-6490},
  doi = {10.1073/pnas.1705868114},
  url = {https://www.pnas.org/content/pnas/114/32/8452.full.pdf},
  abstract = {Gene drives have the potential to rapidly replace a harmful wild-type allele with a gene drive allele engineered to have desired functionalities. However, an accidental or premature release of a gene drive construct to the natural environment could damage an ecosystem irreversibly. Thus, it is important to understand the spatiotemporal consequences of the super-Mendelian population genetics before potential applications. Here, we use a reaction-diffusion model for sexually reproducing diploid organisms to study how a locally introduced gene drive allele spreads to replace the wild-type allele, although it possesses a selective disadvantage s {$>$} 0. Using methods developed by Barton and collaborators, we show that socially responsible gene drives require 0.5 {$<$} s {$<$} 0.697, a rather narrow range. In this "pushed wave" regime, the spatial spreading of gene drives will be initiated only when the initial frequency distribution is above a threshold profile called "critical propagule," which acts as a safeguard against accidental release. We also study how the spatial spread of the pushed wave can be stopped by making gene drives uniquely vulnerable ("sensitizing drive") in a way that is harmless for a wild-type allele. Finally, we show that appropriately sensitized drives in two dimensions can be stopped, even by imperfect barriers perforated by a series of gaps.},
  langid = {english},
  pmcid = {PMC5559037},
  keywords = {bistable wave,Computer Simulation,Ecosystem,Fisher wave,gene drive,Gene Drive Technology,Genetics; Population,Models; Genetic,Population Dynamics},
  file = {/home/lena/snap/zotero-snap/common/Zotero/storage/RRJC49EL/Tanaka et al. - 2017 - Spatial gene drives and pushed genetic waves.pdf}
}

@article{turelli2017,
  title = {Deploying Dengue-Suppressing {{Wolbachia}} : {{Robust}} Models Predict Slow but Effective Spatial Spread in {{Aedes}} Aegypti},
  shorttitle = {Deploying Dengue-Suppressing {{Wolbachia}}},
  author = {Turelli, Michael and Barton, Nicholas H.},
  date = {2017-06-01},
  journaltitle = {Theoretical Population Biology},
  shortjournal = {Theoretical Population Biology},
  volume = {115},
  pages = {45--60},
  issn = {0040-5809},
  doi = {10.1016/j.tpb.2017.03.003},
  url = {https://www.sciencedirect.com/science/article/pii/S0040580916301046},
  urldate = {2022-09-12},
  abstract = {A novel strategy for controlling the spread of arboviral diseases such as dengue, Zika and chikungunya is to transform mosquito populations with virus-suppressing Wolbachia. In general, Wolbachia transinfected into mosquitoes induce fitness costs through lower viability or fecundity. These maternally inherited bacteria also produce a frequency-dependent advantage for infected females by inducing cytoplasmic incompatibility (CI), which kills the embryos produced by uninfected females mated to infected males. These competing effects, a frequency-dependent advantage and frequency-independent costs, produce bistable Wolbachia frequency dynamics. Above a threshold frequency, denoted pˆ, CI drives fitness-decreasing Wolbachia transinfections through local populations; but below pˆ, infection frequencies tend to decline to zero. If pˆ is not too high, CI also drives spatial spread once infections become established over sufficiently large areas. We illustrate how simple models provide testable predictions concerning the spatial and temporal dynamics of Wolbachia introductions, focusing on rate of spatial spread, the shape of spreading waves, and the conditions for initiating spread from local introductions. First, we consider the robustness of diffusion-based predictions to incorporating two important features of wMel-Aedes aegypti biology that may be inconsistent with the diffusion approximations, namely fast local dynamics induced by complete CI (i.e., all embryos produced from incompatible crosses die) and long-tailed, non-Gaussian dispersal. With complete CI, our numerical analyses show that long-tailed dispersal changes wave-width predictions only slightly; but it can significantly reduce wave speed relative to the diffusion prediction; it also allows smaller local introductions to initiate spatial spread. Second, we use approximations for pˆ and dispersal distances to predict the outcome of 2013 releases of wMel-infected Aedes aegypti in Cairns, Australia, Third, we describe new data from Ae. aegypti populations near Cairns, Australia that demonstrate long-distance dispersal and provide an approximate lower bound on pˆ for wMel in northeastern Australia. Finally, we apply our analyses to produce operational guidelines for efficient transformation of vector populations over large areas. We demonstrate that even very slow spatial spread, on the order of 10–20 m/month (as predicted), can produce area-wide population transformation within a few years following initial releases covering about 20–30\% of the target area.},
  langid = {english},
  keywords = {Biocontrol,Bistable wave dynamics,Disease suppression,Population replacement,Population transformation,Zika},
  file = {/home/lena/snap/zotero-snap/common/Zotero/storage/8DW2BNBB/Turelli and Barton - 2017 - Deploying dengue-suppressing Wolbachia  Robust mo.pdf;/home/lena/snap/zotero-snap/common/Zotero/storage/8KWQ83UQ/S0040580916301046.html}
}

@article{unckless2015,
  title = {Modeling the {{Manipulation}} of {{Natural Populations}} by the {{Mutagenic Chain Reaction}}},
  author = {Unckless, Robert L and Messer, Philipp W and Connallon, Tim and Clark, Andrew G},
  date = {2015-10-01},
  journaltitle = {Genetics},
  shortjournal = {Genetics},
  volume = {201},
  number = {2},
  pages = {425--431},
  issn = {1943-2631},
  doi = {10.1534/genetics.115.177592},
  url = {https://doi.org/10.1534/genetics.115.177592},
  urldate = {2021-04-28},
  abstract = {The use of recombinant genetic technologies for population manipulation has mostly remained an abstract idea due to the lack of a suitable means to drive novel gene constructs to high frequency in populations. Recently Gantz and Bier showed that the use of CRISPR/Cas9 technology could provide an artificial drive mechanism, the so-called mutagenic chain reaction (MCR), which could lead to rapid fixation of even a deleterious introduced allele. We establish the near equivalence of this system to other gene drive models and review the results of simple models showing that, when there is a fitness cost to the MCR allele, an internal equilibrium may exist that is usually unstable. In this case, introductions must be at a frequency above this critical point for the successful invasion of the MCR allele. We obtain estimates of fixation and invasion probabilities for the appropriate scenarios. Finally, we discuss how polymorphism in natural populations may introduce sources of natural resistance to MCR invasion. These modeling results have important implications for application of MCR in natural populations.},
  file = {/home/lena/snap/zotero-snap/common/Zotero/storage/MBNV5JU7/Unckless et al. - 2015 - Modeling the Manipulation of Natural Populations b.pdf;/home/lena/snap/zotero-snap/common/Zotero/storage/FC4GBRMV/5930039.html}
}

@article{zhou2019,
  title = {Critical Traveling Waves in a Diffusive Disease Model},
  author = {Zhou, Jiangbo and Song, Liyuan and Wei, Jingdong and Xu, Haimei},
  date = {2019-08-15},
  journaltitle = {Journal of Mathematical Analysis and Applications},
  shortjournal = {Journal of Mathematical Analysis and Applications},
  volume = {476},
  number = {2},
  pages = {522--538},
  issn = {0022-247X},
  doi = {10.1016/j.jmaa.2019.03.066},
  url = {https://www.sciencedirect.com/science/article/pii/S0022247X19302872},
  urldate = {2021-10-20},
  abstract = {In this paper, the existence of a non-trivial, positive and bounded critical traveling wave solution of a diffusive disease model, whose reaction system has infinitely many equilibria, is obtained for the first time. This gives an affirmative answer to an open problem left in Wang et al. (2012) [19]. Our result shows that the critical traveling wave in this model is a mix of front (S-component) and pulse (I-component) type.},
  langid = {english},
  keywords = {Critical traveling wave,Diffusive disease model,Reaction-diffusion equation},
  file = {/home/lena/snap/zotero-snap/common/Zotero/storage/JCT5SAVW/Zhou et al. - 2019 - Critical traveling waves in a diffusive disease mo.pdf}
}

@online{zotero-193,
  title = {Male {{Bias}} and {{Female Fertility}}},
  url = {https://targetmalaria.org/what-we-do/our-approach/male-bias-and-female-fertility/},
  urldate = {2022-02-28},
  abstract = {Gene Drive Malaria is predominantly a rural disease, which has remained most entrenched in African countries with populations spread over large areas and often with less well-developed transport and public health infrastructures. The complexity and cost of carrying out repeat interventions (such as spraying and bednet distributions), combined with issues of growing resistance to insecticide […]},
  langid = {english},
  organization = {{Target Malaria}},
  file = {/home/lena/snap/zotero-snap/common/Zotero/storage/6GIX55P7/male-bias-and-female-fertility.html}
}

\end{document}